\newcommand{\xrightarrowdbl}[2][]{%
  \xrightarrow[#1]{#2}\mathrel{\mkern-14mu}\rightarrow
} 							
\newcommand{\kf}{{\mathcal F}}
\newcommand{\ko}{{\mathcal O}}
\def\k{\mathbbm{k}}
\def\R{\mathbbm{R}}
\def\C{\mathbbm{C}}
\def\Z{\mathbbm{Z}}
\def\Q{\mathbbm{Q}}
\def\F{\mathbbm{F}}
\def\fm{\frak{m}} \def\fn{\frak{n}}
\def\fp{\frak{p}}\def\fq{\frak{q}}
\def\fa{\frak{a}} \def\fb{\frak{b}} \def\fc{\frak{c}}
\def\x{\langle x \rangle}
\def\px{\langle \fp,x \rangle}
\DeclareMathOperator{\Spec}{Spec}
\DeclareMathOperator{\Max}{Max}
\DeclareMathOperator{\Ann}{Ann}
\DeclareMathOperator{\Sing}{Sing}
\DeclareMathOperator{\Sm}{Sm}
\DeclareMathOperator{\Reg}{Reg}
\DeclareMathOperator{\Supp}{Supp}
\DeclareMathOperator{\Quot}{Quot}
\DeclareMathOperator{\chara}{char}
\DeclareMathOperator{\coker}{coker}
\newtheorem{theorem}{Theorem}
\newtheorem{lemma}[theorem]{Lemma}
\newtheorem{corollary}[theorem]{Corollary}
\newtheorem{proposition}[theorem]{Proposition}
\newtheorem{definition}[theorem]{Definition}
\newtheorem{notation}[theorem]{Notation}
\newtheorem{example}[theorem]{Example}
\newtheorem{remark}[theorem]{Remark}
\begin{document}
\title[Semicontinuity of Singularity Invariants]
 {Semicontinuity of Singularity Invariants \\
 in Families of Formal Power Series}
\author[Greuel and Pfister]{Gert-Martin Greuel and Gerhard Pfister}
\dedicatory{Dedicated to Andr\'as N\'emethi on the occasion of his sixtieth birthday}

\address{%
University of Kaiserslautern\\
Department of Mathematics\\
Erwin-Schroedinger Str.\\
67663 Kaiserslautern\\
Germany}

\email{greuel@mathematik.uni-kl.de}
\email{pfister@mathematik.uni-kl.de}

\subjclass{13B35, 13B40, 14A15, 14B05, 14B07}
\keywords{Formal power series, completed tensor product, Henselian tensor product, semicontinuity, Milnor number, Tjurina number, determinacy}

\bigskip
\begin{abstract}
The problem we are considering  came up in connection with the classification of singularities in positive characteristic. 
Then it is important that certain invariants like the determinacy can be bounded simultaneously in families of formal power series parametrized by some algebraic variety. In contrast to the case of analytic 
or algebraic families, where such a bound is well known, the problem is rather subtle, since the modules defining the invariants are quasi-finite but not finite over the base space. In fact, in general the fibre dimension is not semicontinuous and the quasi-finite locus is not open. However, if we pass to the completed fibres in a family of rings or modules we can prove that  their fibre dimension  is semicontinuous under some mild conditions. We prove this in a rather general framework by introducing and using the completed and the Henselian tensor product, the proof being more involved than one might think. Finally we apply this to the Milnor number and the Tjurina number in families of hypersurfaces and complete intersections and to the determinacy in a family of ideals.

\end{abstract}
\maketitle

\renewcommand{\contentsname}{Table of Contents}
\tableofcontents

\section*{Introduction}

In connection with the classification of singularities defined by formal power series  over a field a fundamental invariant is the modality of the singularity (with respect to some equivalence relation like right or contact equivalence). To determine the modality one has to investigate adjacent singularities that appear in nearby fibres. This cannot be done by considering families over complete local rings but one has to consider families of power series parametrized by some algebraic variety in the neighbourhood of a given point.
To determine potential adjacencies, an important tool is the semicontinuity of certain 
singularity invariants like, for example, the Milnor or the Tjurina number.  Another basic question is if the determinacy of an ideal can be bounded by a semicontinuous invariant. In the complex analytic situation the answer to these questions is well known and positve, but for formal power series the problem is much more subtle than one might think at the first glance. This is mainly due to the fact that ideals or modules that define the invariants are quasi-finite but not finite over the base space.

The modality example shows that the questions treated in this paper are rather natural and appear in important applications. 
Moreover, the semicontinuity in general is a very basic property with numerous applications in many other contexts. Therefore we decided to choose a rather general framework with families of modules presented by matrices of power series and parametrized by the spectrum of some Noetherian ring. It is not difficult to see that the fibre dimension is in general not semicontinuous and that the quasi-finite locus is in general not open (in contrast the case of ring maps of finite type, where the quasi-finite locus is open by Zariski's Main Theorem, cf. Proposition \ref{prop.ftype}), see Examples \ref{ex.Kt} and \ref{ex.Z}. It turns out that the situation is much more satisfactory if we consider not the fibres but the completed fibres and we prove the desired semicontinuity for the completed fibre dimension under some conditions on the family. To guarantee that the completed fibre families behave well under base change we introduce the notion of a (partial) completed tensor product and study its properties in sections \ref{ssec.ctp} and \ref{ssec.qfm}.  

Unfortunately, we cannot prove the semicontinuity of the completed fibre dimension in full generality. We prove it if either the base ring has dimension one (in section \ref{ssec.1dim}),
or if the base ring is complete local containing a field, 
 or if the presentation matrix has polynomials or algebraic power series as entries (in section \ref{ssec.poly}). Together, these cases cover most applications (see Corollary \ref{cor.qf} for 
 a summary). To treat the latter case, we use Henselian rings and the Henselian tensor product, for which we give a short account in section \ref{ssec.hs}. It would be interesting to know, if the result holds for presentation matrices with arbitrary power series as entries or if there are counterexamples. In section \ref{ssec.related} we consider also the case of families of finite type over the base ring and prove a version of Zariski's main theorem for modules.
Moreover, we compare the completed fibre with the usual fibre.

In section \ref{sec.singinv} we apply our results to singularity invariants. We discuss and compare first the notions of regularity and smoothness (over a field) and show that both notions coincide for the completed fibres (Lemma \ref{lem.sing}). 
Under the restrictions mentioned above, we prove the semicontinuity of the Milnor number and Tjurina number for hypersurfaces (section \ref{ssec.mutau}) and  the Tjurina number for complete intersections  (section \ref{ssec.icis}) as well as an upper bound for the determinacy of an ideal  (section \ref{ssec.det}). Since the base ring may be the integers, our results are of some interest for computational purposes. For example, if a power series has integer coefficients then the Milnor number over the rationals is bounded by the Milnor number modulo just one (possibly unlucky) prime number if this is finite (see Corollary \ref{cor.mutau} and, more generally,  Corollary \ref{cor.Z} and Remark \ref{rem.Z}). 

We assume all rings to be associative, commutative and with unit. Throughout the paper  $\k$ denotes an arbitrary field, $A$ a ring, $R= A[[x]], x = (x_1,\cdots, x_n)$, the formal power series ring over $A$
 and $M$ an $R$-module. 
 For our main results we will assume that $A$ is Noetherian and that $M$ is finitely generated as $R$-module.\\
 
 {\bf Acknowledgment:} We would like to thank the anonymous referee for their careful proofreading and in particular for providing answers to two of our original questions (cf. the comment after Proposition \ref{prop.ann}).

\section{Quasi-finite modules and semicontinuity}\label{sec.qms}
\medskip

\subsection {The completed tensor product}\label{ssec.ctp}
Let $A$ be a ring, $R=A[[x]]$ and $M$ an $R$-module.
For any prime ideal  $\fp$ of $A$ let $k(\fp) = A_\fp/\fp A_\fp $ be the residue field of $\fp$. $k(\fp) = \Quot(A/\fp)$ is the quotient field of $A/\fp$ and hence  $k(\fp) = A/\fp$  if $\fp$ is a maximal ideal. 
We consider $M$ via the canonical map $A  \xhookrightarrow {\text{} } R$ as an $A$-module and set
$$M(\fp):= M_\fp \otimes_{A_\fp} k(\fp) =(M \otimes_A A_\fp) \otimes_{A_\fp} k(\fp) = M \otimes_{A} k(\fp),$$
which is called the {\em fibre} of $M$ over $\fp$. $M(\fp)$ is a vector space over $k(\fp)$ 
and its dimension is denoted by
$$ d_\fp (M) := \dim_{k(\fp)} M(\fp).$$
$M$ is called {\em quasi-finite}\footnote{For $M=R/I$, $I$ an ideal, this is the original definition of Grothendieck. Nowadays most authors (e.g. \cite{Sta19}) require in addition that $R$ is of finite type over $A$.} over $\fp$ if $ d_\fp (M) <\infty$.
We are interested in the behavior of $\ d_\fp (M)$ as $\fp$ varies in $\Spec A$, in particular in finding conditions under which $\ d_\fp (M)$ is semicontinuos on $\Spec A$.

We say that  a function $d: \Spec A \to \R$,  $\fp \mapsto d_\fp$,
 is {\em (upper) semicontinuous} at $\fp$ if $\fp$ has an open neighbourhood $U \subset \Spec A$  such that  $d_\fq  \leq  d_\fp $ for all $\fq \in U$. 
 $d $ is semicontinuous on  $\Spec A$ if it is semicontinuous at every
$\fp \in \Spec A$.
\medskip

For finitely presented $A$-modules $M$ the semicontinuity of 
$\fp \mapsto d_\fp (M)$  is true and well known (cf. Lemma \ref{lem.Afin}). However, in many applications $M$ is not finitely generated over $A$ but finite over some $A$-algebra  $R$. Such  a situation appears naturally in algebraic geometry, when one considers families of schemes or of coherent sheaves over $\Spec A$.
But then it is usually assumed that the ring $R$ is either of (essentially) finite type over $A$ (in algebraic geometry) or an analytic $A$-algebra (in complex analytic geometry). When we study  families of singularities defined by formal power series (cf. Section \ref{sec.singinv}), we have to consider  $R=A[[x]]$, which is not of finite type over $A$. As far as we know, this situation has not been systematically studied and it leads to some perhaps unexpected results. For example, $\, d_\fp (M)$ is in general not semicontinuous on  $\Spec A$ (cf. Examples \ref{ex.Kt},  \ref{ex.Z}).   

It turns out that the situation  is much more satisfactory if we pass from the usual fibres to the completed fibres, that is, we consider the {\em completed fibre dimension}
$$ \hat d_\fp (M) := \dim_{k(\fp)} M(\fp)^\wedge,$$
where $M(\fp)^\wedge$ is the $\x$-adic completion of the $R(\fp)$-module $M(\fp)$. To guarantee that the completed fibres behave well when $\fp$ varies in $\Spec A$, we introduce the notion of a completed tensor product below.

\bigskip

 For a finitely presented $A$-module $M$   the semicontinuity of  $\fp \to d_\fp(M)$ is well known:
 
\begin{lemma}\label{lem.Afin}
If $M$ is a finitely presented $A$-module then $\ d_\fp (M)$ is semicontinuous on   $\Spec A$. Moreover, if $M$ is $A$-flat, then $\ d_\fp (M)$ is locally constant on  $\Spec A$.
\end{lemma}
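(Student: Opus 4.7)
My plan is to reduce both assertions to the behaviour of a matrix with entries in $A$. Pick a finite presentation
$$ A^m \xrightarrow{\phi} A^n \longrightarrow M \longrightarrow 0. $$
Right-exactness of $-\otimes_A k(\fp)$ gives $M(\fp)\cong \coker(\phi\otimes k(\fp))$, hence
$$ d_\fp(M) \;=\; n \,-\, \rank_{k(\fp)}\bigl(\phi\otimes k(\fp)\bigr). $$
So upper semicontinuity of $d_\fp(M)$ is equivalent to \emph{lower} semicontinuity of the rank function $\fp\mapsto \rank_{k(\fp)}(\phi\otimes k(\fp))$ on $\Spec A$.

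For the rank statement I would use the classical minor criterion: a matrix over a field has rank $\ge r$ if and only if some $r\times r$ minor is nonzero. Let $r := \rank_{k(\fp)}(\phi\otimes k(\fp))$ and choose an $r\times r$ minor $m$ of $\phi$, viewed as an element of $A$, whose residue in $k(\fp)$ is nonzero, i.e.\ $m\notin \fp$. For any $\fq \in D(m) := \Spec A \setminus V(m)$ the residue of $m$ in $k(\fq)$ is also nonzero, so $\rank_{k(\fq)}(\phi\otimes k(\fq)) \ge r$. Translating back,
$$ d_\fq(M) \;\le\; n-r \;=\; d_\fp(M) \qquad \text{for all } \fq\in D(m), $$
which is the desired semicontinuity at $\fp$.

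For the flatness statement, I would invoke the standard facts that a finitely presented flat module is projective, and that a finitely generated projective module over $A$ is locally free: there exist $f\in A\setminus \fp$ and an integer $r$ such that $M_f$ is free of rank $r$ over $A_f$. Consequently $d_\fq(M)=r$ for every $\fq \in D(f)$, so $d(M)$ is locally constant.

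There is no real obstacle here; the only ingredients are the minor characterization of rank (applied pointwise to residue fields) and the classical equivalence \emph{finitely presented and flat} $\Leftrightarrow$ \emph{projective} $\Leftrightarrow$ \emph{locally free of finite rank}. The argument mainly serves as a baseline: the subsequent results are harder precisely because $M$ will only be finite over $R=A[[x]]$, not over $A$, so no presentation with entries in $A$ is available and the honest fibres $M(\fp)$ must be replaced by their $\langle x \rangle$-adic completions.
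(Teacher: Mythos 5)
Your proposal is correct and follows essentially the same route as the paper: both reduce to the presentation matrix over $A$, express $d_\fp(M)$ as $n$ minus the rank of the matrix over $k(\fp)$, and use lower semicontinuity of that rank (which you make explicit via the minor criterion, where the paper simply asserts it), and both handle the flat case by local freeness of $M_f$ over $A_f$ for some $f\notin\fp$.
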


\begin{proof} Fix $\fp \in \Spec A$ and consider a presentation of $M$,
$$ A^p \xrightarrow{\text {$P$}} A^q \to  M \to 0,$$
with matrix $P = (p_{ij}), \ p_{ij}  \in A$. Applying $\otimes_A k(\fp)$ to this sequence we get the exact sequence of vector spaces
 $$ k(\fp)^p \xrightarrow{\text {$\overline P_\fp$}} k(\fp)^q \to  M(\fp) \to 0,$$
with entries of $\overline P_\fp$ being the images of $p_{ij}$ in $k(\fp)$. 
Then  $\ d_\fp (M)$ is finite and
$\ d_\fp (M)  = q - rank(\overline P_\fp)$. Since $rank(\overline P_\fp) \leq rank(\overline P_\fq)$ for all $\fq$ in some neighbourhood $U$ of $\fp$, the claim follows.

If $M$ is flat, then $M_\fp$ is free over the local ring $A_\fp$ for a given $\fp \in \Spec A$. By  \cite{Mat86}, Theorem 4.10 (ii) (and its proof) there exists an $f \notin \fp$ such that $M_f$ is a free $A_f$-module of some rank $r$ and hence
$d_\fq (M) =r$ for  $\fq$ in  the open neighbourhood $D(f)$ of $\fp$.
\end{proof}
\bigskip

We introduce now the completed tensor product. Let us denote by
$$\langle x \rangle :=  \langle x_1,...,x_n \rangle_R$$ 
  the ideal in $R$ generated by $x_1,...,x_n$. More generally, if
$S$ is an $R$-algebra, then   $\langle x \rangle_S$  denotes  the ideal in $S$ generated by (the images of) $x_1,...,x_n$.

For an $R$-module $N$ denote by
$$N^{\wedge} :=  \lim\limits_{\longleftarrow}N/\langle x \rangle^mN $$ 
 the $\langle x \rangle$-adic completion of $N$. If $N$ is also an $S$-module for some $R$-algebra $S$, then 
$\langle x \rangle^m N =( \x_S)^m N$, and hence the 
$\x$-adic completion and the $\x_S$-adic completion of $N$ coincide.

\begin{definition}\label{def.ctp}
Let $A$ be a ring, $R=A[[x]]$,  $B$ an $A$-algebra and $M$ an $R$-module. We define  the {\em completed tensor product} of $R$  and $B$ over $A$ as
the ring
$$R \hat\otimes_A B :=  \lim\limits_{\longleftarrow}\big((R/\langle x \rangle^m) \otimes_A B \big)$$
and the  {\em completed tensor product} of  $M$  and $B$ over $A$ as the module 
$$M \hat\otimes_A B :=  \lim\limits_{\longleftarrow}\big((M/\langle x \rangle^m M)\otimes_A B\big).$$
If $N$ is an $A$-module, we define  the  $R$-module
$$M \hat\otimes_A N :=  \lim\limits_{\longleftarrow}\big((M/\langle x \rangle^m M)\otimes_A N\big)$$
and call it the {\em completed tensor product} of  $M$  and $N$ over $A$.
\end{definition}

One reason why we consider the completed tensor product is that it provides the right base change property in the category of rings of the form $A[[x]]$ by the following Proposition \ref{prop.pctp}.1.

\begin{proposition}\label{prop.pctp}
The completed tensor product has the following properties (assumptions as in Definiton \ref{def.ctp}).
\begin{enumerate}
\item
$A[[x]] \hat\otimes_A B = (R \otimes_A B)^{\wedge} = B[[x]].$ 
\medskip

\item $M \hat\otimes_A N = (M \otimes_A N)^{\wedge}.$
\medskip

\item If $M$ is finitely presented over $R$ and $N$ is a finitely presented  $B$-module, then
$$M \hat\otimes_A N \cong 
(M  \otimes_A N)\otimes_{R\otimes_A B}‚ (R\hat\otimes_A B).$$ 
\item 

The canonical map
 $M \otimes_A N \to M \hat\otimes_A N$  is injective if $A$ is Noetherian, $M$ finite over $R$ and $N$ finite over $A$.
\medskip
\item If $\x ^m \subset \Ann_R (M)$ for some $m$  then
$M \hat\otimes_A N = M \otimes_A N$ for every $A$-module $N$. 

 \end{enumerate}
\end{proposition}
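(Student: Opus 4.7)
The plan is to dispatch (1), (2), (4), and (5) by direct manipulations with inverse limits, the action of $\x$, and Krull's intersection theorem, while (3) reduces, via (1) and (2), to the standard fact that $\x$-adic completion commutes with extension of scalars on finitely presented modules.

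For (1), the quotient $R/\x^m$ coincides with $A[x]/\langle x\rangle^m$, a free $A$-module on the monomials of degree less than $m$; tensoring with $B$ over $A$ therefore gives $B[x]/\langle x\rangle^m$, whose inverse limit is $B[[x]]$. The same system arises as $(R \otimes_A B)/\x^m(R \otimes_A B)$ (from right-exactness of $-\otimes_A B$ applied to $\x^m \to R \to R/\x^m \to 0$), so the limit also equals $(R \otimes_A B)^\wedge$. For (2), since $\x$ acts on $M \otimes_A N$ through its action on $M$, applying right-exactness of $-\otimes_A N$ to $\x^m M \to M \to M/\x^m M \to 0$ yields $(M/\x^m M) \otimes_A N \cong (M \otimes_A N)/\x^m(M \otimes_A N)$, and (2) follows upon passing to inverse limits.

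For (3), I would first verify that $M \otimes_A N$ is a finitely presented $R \otimes_A B$-module: given presentations $R^p \to R^q \to M \to 0$ and $B^s \to B^t \to N \to 0$, tensoring them over $A$ and invoking right-exactness produces an explicit finite presentation of $M \otimes_A N$ over $R \otimes_A B$. Since $\x$ is a finitely generated ideal, the general identity $P^\wedge \cong P \otimes_S S^\wedge$ for $P$ a finitely presented $S$-module and a finitely generated ideal of $S$ (cf.\ \cite{Sta19}) then gives
\[
(M \otimes_A N)^\wedge \cong (M \otimes_A N) \otimes_{R \otimes_A B} (R \otimes_A B)^\wedge,
\]
and (1)--(2) identify the left-hand side with $M \hat\otimes_A N$ and $(R \otimes_A B)^\wedge$ with $R \hat\otimes_A B$.

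For (4), by (2) the canonical map is the $\x$-adic completion map of $M \otimes_A N$, whose kernel is $\bigcap_m \x^m(M \otimes_A N)$. Because $N$ is finite over $A$, $M \otimes_A N$ is finite over $R$; because $A$ is Noetherian, so is $R$; and $\x$ lies in the Jacobson radical of $R$, since every element of $\x$ has zero constant term so $1 + \x \subset R^\times$. Krull's intersection theorem then forces the vanishing. Finally, (5) is immediate: $\x^m M = 0$ gives $\x^m(M \otimes_A N) = 0$, so the inverse system defining $M \hat\otimes_A N$ is eventually constant equal to $M \otimes_A N$. The principal obstacle is (3), where one must carefully exhibit the finite presentation of $M \otimes_A N$ over the potentially non-Noetherian ring $R \otimes_A B$ and then invoke the completion-versus-tensor identity in a form valid for finitely generated (not necessarily Noetherian) ideals.
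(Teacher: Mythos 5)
Your proposal is correct and follows the paper's argument essentially step for step: (1), (2) and (5) by the same inverse-limit manipulations, (4) via Krull's intersection theorem after noting that $\x$ lies in the Jacobson radical of the Noetherian ring $R$, and (3) by exhibiting $M\otimes_A N$ as finitely presented over $R\otimes_A B$ and then invoking the completion-versus-base-change identity for finitely presented modules (the paper appeals to the proof of Proposition 10.13 in \cite{AM69} at exactly the point where you cite the Stacks Project version for finitely generated ideals). No substantive difference in route or in the facts relied upon.
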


\begin{proof}
1. We have $ \lim\limits_{\longleftarrow}\big(A[[x]]/\langle x \rangle^m \otimes_A B \big)$ 
= $ \lim\limits_{\longleftarrow}\big(A[x]/\langle x \rangle^m \otimes_A B \big)$
= $  \lim\limits_{\longleftarrow}B[x]/\langle x \rangle^m $ = $B[[x]]$,
showing the second equality. The first follows since  $(R/\langle x \rangle^m) \otimes_A B = (R\otimes_A B)/\langle x \rangle^m ({R\otimes_A B})$.

2. Since $(M/\langle x \rangle^m M)\otimes_A N 
= (M\otimes_A N)/\langle x \rangle^m(M\otimes_A N)$ the equality follows and that $(M \otimes_A N)^{\wedge}$ is the $\x_R$ as well as the  
$\x_{R\otimes_A B}$-adic completion of $M\otimes_A N$.

3. If $M$ resp. $N$ are finitely presented over $R$ resp. $B$, then 
$M\otimes_A N$ is finitely presented over $R\otimes_A B$. 
Hence we can apply (the proof of)  \cite[Proposition 10.13]{AM69} and use 1. to show the isomorphism.

4. If $A$ is Noetherian then $R$ is Noetherian. If
$M$ is finitely generated over $R$ and $N$ finitely generated over $A$ then $M\otimes_A N$ is finitely generated over $R$. 
The injectivity follows from 2. and \cite[Theorem 10.17 and Corollary 10.19]{AM69},
since $\x$ is contained in the Jacobson radical of $R$ by Lemma \ref{lem.maxA}.

5. If $\x^m M = 0$ for some $ m$, then $M \hat\otimes_A N = M \otimes_A N$ by definition of the completed tensor product.
 \end{proof}
 
\begin{corollary}\label{cor.rex}
The completed tensor product is right-exact on the category of finitely presented modules. That is, let  
$$
\begin{array}{*{20}l}
M' \to M \to  M'' \to 0 ,  \text{resp.}\\
N' \to N \to N'' \to 0
\end{array}
$$
be exact sequences of finitely presented $R$-modules resp. $B$-modules. Then the sequences of finitely presented $R \hat\otimes_A B$-modules
$$
\begin{array}{*{20}l}
M' \hat\otimes_A N\to M\hat\otimes_A N\to M'' \hat\otimes_A N\to 0,\text{resp.}\\
M \hat\otimes_A N'\to M\hat\otimes_A N\to M\hat\otimes_A N''\to 0\\
\end{array}
$$
are exact. 
\end{corollary}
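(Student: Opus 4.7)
The plan is to realise the completed tensor product, restricted to finitely presented inputs, as a composition of two right-exact functors and then invoke Proposition \ref{prop.pctp}.3. That proposition provides, for any finitely presented $R$-module $L$ and finitely presented $B$-module $P$, a natural isomorphism
$$L \hat\otimes_A P \;\cong\; (L \otimes_A P) \otimes_{R \otimes_A B} (R \hat\otimes_A B).$$
Hence the functor $L \mapsto L \hat\otimes_A P$ factors as the ordinary $A$-tensor product $L \mapsto L \otimes_A P$ followed by base change along the ring map $R \otimes_A B \to R \hat\otimes_A B$, and both factors are right-exact; the same factorisation applies with $L$ fixed and $P$ varying.

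To deal with the first sequence, I would first apply the functor $- \otimes_A N$ to $M' \to M \to M'' \to 0$, producing an exact sequence of $R \otimes_A B$-modules by right-exactness of the ordinary tensor product. I would then apply the functor $- \otimes_{R \otimes_A B} (R \hat\otimes_A B)$, which is again right-exact, and identify each term via the displayed isomorphism (using its naturality in the first argument) to obtain the exactness of
$$M' \hat\otimes_A N \to M \hat\otimes_A N \to M'' \hat\otimes_A N \to 0.$$
The second sequence is treated by the symmetric argument, fixing $M$ and varying the $N$-argument.

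To verify the finiteness claim, I would combine presentations $R^p \to R^q \to M \to 0$ over $R$ and $B^s \to B^t \to N \to 0$ over $B$: first tensor the presentation of $M$ with $B$ over $A$ to get a finite presentation of $M \otimes_A B$ as an $R \otimes_A B$-module, then tensor over $B$ with $N$, using right-exactness of $- \otimes_B N$ and the fact that finite presentation is preserved when tensoring with a finitely presented module over a subring; the outcome is a finite presentation of $M \otimes_A N$ as an $R \otimes_A B$-module. Base change from $R \otimes_A B$ to $R \hat\otimes_A B$ preserves finite presentation, so by the identification above every term of the displayed sequences is finitely presented over $R \hat\otimes_A B$. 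I do not anticipate any real obstacle: the statement is essentially a formal consequence of Proposition \ref{prop.pctp}.3, and the only point that genuinely needs care is to check that the finite-presentation hypotheses of that proposition are in force at each spot of the sequence, which is exactly what has been assumed.
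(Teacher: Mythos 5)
Your proposal is correct and follows essentially the same route as the paper: the paper's proof likewise takes the exact sequences $M'\otimes_A N\to M\otimes_A N\to M''\otimes_A N\to 0$ (resp.\ the one in the $N$-variable) obtained from ordinary right-exactness, tensors them with $R\hat\otimes_A B$ over $R\otimes_A B$, and identifies the terms via Proposition \ref{prop.pctp}.3. Your additional remarks on naturality and on verifying the finite-presentation hypotheses at each term are sound elaborations of what the paper leaves implicit.
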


\begin{proof}
The sequences 
 $ M'\otimes_A N \to M\otimes_A N\to M'' \otimes_A N\to 0$ and 
  $ M\otimes_A N' \to M\otimes_A N\to M\otimes_A N''\to 0$ 
are exact. Now tensor these sequences with $R\hat\otimes_A B$ over 
$R\otimes_A B$  and apply Proposition \ref{prop.pctp}.3. 
 \end{proof}

\begin{corollary}\label{cor:cpl} \noindent
 \begin{enumerate}
\item[(i)] 
 $R\hat\otimes_A A =R$.
 \medskip
 
 \item [(ii)] If $S$ is multiplicatively closed in $A$ then 
 $A[[x]]\hat\otimes_A (S^{-1}A) = (S^{-1}A)[[x]]$.
 \medskip
 
 \item[(iii)] For any $R$-module $M$ we have $M\hat\otimes_A A =M^\wedge$. 
\medskip
 
 \item[(iv)] If $M$ is  finitely presented  over $R$ then $M=M^\wedge$. If moreover  $N$ is finitely presented over $A$, then  $M\hat\otimes_A N =M\otimes_A N$. 
\end{enumerate}
\end{corollary}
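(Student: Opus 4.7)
Parts (i) and (ii) are immediate specializations of Proposition \ref{prop.pctp}(1): taking $B = A$ yields $A[[x]]\hat\otimes_A A = A[[x]] = R$, while taking $B = S^{-1}A$ yields $A[[x]]\hat\otimes_A(S^{-1}A) = (S^{-1}A)[[x]]$. For part (iii), I would just unwind the definition: since $(M/\langle x\rangle^m M)\otimes_A A = M/\langle x\rangle^m M$ for every $m$, the inverse system defining $M\hat\otimes_A A$ coincides termwise with the inverse system defining $M^\wedge$, so the two limits agree.

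Part (iv) is the substantive piece. The first claim, namely $M = M^\wedge$ whenever $M$ is finitely presented over $R$, rests on the fact that $R = A[[x]]$ is itself $\langle x\rangle$-adically complete. Under the paper's standing Noetherian hypothesis on $A$ (so that $R$ is Noetherian by Hilbert's basis theorem for power series), I would invoke the standard consequence of the Artin--Rees lemma that finitely generated modules over a Noetherian $I$-adically complete ring are themselves $I$-adically complete. Concretely: choose a finite presentation $R^p \xrightarrow{P} R^q \to M \to 0$; the module $R^q$ is clearly complete, Artin--Rees guarantees that the subspace topology on $P(R^p)$ agrees with its intrinsic $\langle x\rangle$-adic topology, so $P(R^p)$ is closed in $R^q$, and then the quotient $M = \coker P$ inherits completeness.

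For the final assertion of (iv) there are two equally short routes. The first is to apply Proposition \ref{prop.pctp}(2) to obtain $M\hat\otimes_A N = (M\otimes_A N)^\wedge$, then note that tensoring a finite $A$-presentation of $N$ with $M$ yields a finite $R$-presentation of $M\otimes_A N$, so the first part of (iv) already gives $(M\otimes_A N)^\wedge = M\otimes_A N$. The second is to specialize Proposition \ref{prop.pctp}(3) to $B = A$: the formula $M\hat\otimes_A N \cong (M\otimes_A N)\otimes_{R\otimes_A A}(R\hat\otimes_A A)$, combined with part (i), rewrites the right-hand side as $M\otimes_A N$. The only step that goes beyond formal bookkeeping is the completeness assertion $M = M^\wedge$, which genuinely uses Artin--Rees and hence the Noetherianness of $A$; everything else is a direct substitution into Proposition \ref{prop.pctp} or into the definition of the completed tensor product.
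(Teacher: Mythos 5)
Your proposal is correct, and for parts (i)--(iii) and the second assertion of (iv) it coincides with the paper's proof, which simply cites Proposition \ref{prop.pctp}.1 for (i) and (ii), Proposition \ref{prop.pctp}.2 for (iii), and Proposition \ref{prop.pctp}.3 with $B=A$ together with (i) for (iv). The one place you genuinely diverge is the first claim of (iv), $M=M^\wedge$: the paper obtains it as a formal consequence of Proposition \ref{prop.pctp}.3 (taking $B=A$ and $N=A$ gives $M\hat\otimes_A A\cong M\otimes_A A=M$, which combined with (iii) yields $M^\wedge=M$), whereas you prove it directly from the completeness of $R=A[[x]]$ in the $\x$-adic topology plus the Artin--Rees lemma applied to a finite presentation. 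The two routes rest on the same underlying commutative algebra -- the paper's Proposition \ref{prop.pctp}.3 is itself proved by appealing to \cite[Proposition 10.13]{AM69} -- so neither is circular; your version is more self-contained and makes visible exactly where completeness of $R$ enters, while the paper's version is shorter and, since Proposition \ref{prop.pctp}.3 only requires finite presentation rather than Noetherianness, is marginally more general than your explicit reliance on $R$ being Noetherian (which is, however, harmless under the paper's standing hypothesis that $A$ is Noetherian).
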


\begin{proof}
(i) and (ii) follow immediately  from Proposition \ref{prop.pctp}.1,
(iii) is a special case of Proposition \ref{prop.pctp}.2 and
(iv) follows from (i) and Proposition \ref{prop.pctp}.3 with $B=A$.
  \end{proof}
 
Applying Corollary \ref{cor.rex} and Proposition \ref{prop.pctp}.1 we get
\begin{corollary}\label{cor.pres}
If  $ A[[x]]^p \xrightarrow{\text {$T$}} A[[x]]^q \to  M \to 0$ is an  $A[[x]]$-presentation of $M$ and $B$ an $A$-algebra, then 
$$M\hat\otimes_A B=\coker \big(B[[x]]^p \xrightarrow{\text {$T$}}B[[x]]^q\big).$$
\end{corollary}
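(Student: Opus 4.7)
The plan is a direct assembly of the two cited results. First I would apply the right-exactness of the completed tensor product on finitely presented modules (Corollary \ref{cor.rex}) to the given presentation, tensoring with $B$ over $A$. Since $A[[x]]^p$, $A[[x]]^q$ and $M$ are all finitely presented over $A[[x]]$, this immediately yields a right-exact sequence
$$A[[x]]^p \hat\otimes_A B \to A[[x]]^q \hat\otimes_A B \to M\hat\otimes_A B \to 0$$
of $R\hat\otimes_A B$-modules, so it suffices to identify the first two terms and the induced map between them.

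Next I would show that $A[[x]]^r \hat\otimes_A B \cong B[[x]]^r$ for $r = p,q$. The completed tensor product commutes with finite direct sums, since both the defining inverse limit and the ordinary tensor product do. Hence
$$A[[x]]^r \hat\otimes_A B = \lim_{\longleftarrow} \bigl((A[[x]]/\x^m)^r \otimes_A B\bigr) = \bigl(\lim_{\longleftarrow} ((A[[x]]/\x^m) \otimes_A B)\bigr)^r = (A[[x]] \hat\otimes_A B)^r = B[[x]]^r,$$
the last equality being Proposition \ref{prop.pctp}.1. Tracing through the construction, the induced map $B[[x]]^p \to B[[x]]^q$ is precisely $T$ regarded as a matrix over $B[[x]]$ via the canonical ring homomorphism $A[[x]] \to B[[x]]$: at each level $m$ the map induced by $T$ is its reduction $T \bmod \x^m$, the tensor with $B$ over $A$ leaves these entries intact (now reinterpreted in $B[x]/\x^m$), and passing to the inverse limit reconstructs $T$ with entries in $B[[x]]$.

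There is no real obstacle here; the content has essentially been packaged in Corollary \ref{cor.rex} and Proposition \ref{prop.pctp}.1. The only tiny technicality worth flagging is the interchange of the inverse limit with finite direct sums used in the identification $A[[x]]^r \hat\otimes_A B = B[[x]]^r$, which is elementary.
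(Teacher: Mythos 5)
Your proposal is correct and follows exactly the paper's route: the paper derives this corollary in one line by applying Corollary \ref{cor.rex} to the given presentation (with $N=B$) and identifying the free terms via Proposition \ref{prop.pctp}.1. The details you add — compatibility of the completed tensor product with finite direct sums and the identification of the induced map as $T$ over $B[[x]]$ — are exactly the points the paper leaves implicit, and you handle them correctly.
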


 \begin{remark} \label{rem.faith}{\em 
 Let $A$ be Noetherian and $B$ an $A$-algebra. If $\x$ is contained in the Jacobson radical of $R\otimes_A B$, then $R\hat\otimes_A B$ is
 faithfully flat over $R\otimes_A B$, by Proposition \ref{prop.pctp}.1 and \cite[Theorem 8.14]{Mat86}. Note however that although $\x$ is contained in the Jacobson radical of $R$ by Lemma \ref{lem.maxA} below, it need not be in the  Jacobson radical of $R\otimes_A B$ (cf. Example  \ref{ex.2}). 
  }
 \end{remark}
  
 \begin{example} \label{ex.1}{\em 
Let $\langle f_1,\dots,f_k\rangle \subset R=A[[x]]$ be an ideal and $M=A[[x]]/\langle f_1,\dots,f_k\rangle$. 
If $\fp$ is a prime ideal in $A$ then $R\hat \otimes_A A_\fp = A_\fp[[x]]$ and form Corollary \ref{cor.pres} we get  $M\hat\otimes_A A_\fp = A_\fp[[x]]/\langle f_1,\dots,f_k\rangle.$ If $k(\fp)$ is the residue field of $A$ at $\fp$ then 
$M\hat\otimes_A k(\fp) = k(\fp)[[x]]/\langle f_1,\dots,f_k\rangle$, something what one expects as fibre of $M$ over $\fp$. While
$A_\fp[[x]]$ and $k(\fp)[[x]]$ are nice local rings, the subrings $R\otimes_A A_\fp \subset A_\fp[[x]]$ and  $R\otimes_A k(\fp) \subset k(\fp)[[x]]$ are in general not local if $\fp$ is not a maximal ideal (see Example \ref{ex.2}).
}
 \end{example}
\begin{remark}{\em
 Proposition \ref{prop.pctp}.1 with $B=A[[y]], \, y=(y_1,...,y_m),$ implies
 $$ A[[x]] \hat\otimes_A A[[y]] =A[[x,y]].$$
Now let $A$ be Noetherian. If $I$ resp. $J$ are ideals in $A[[x]]$ resp. $A[[y]]$,  we get from Corollary \ref{cor.rex}
 $$ A[[x]]/I \hat\otimes_A A[[y]]/J =A[[x,y]]/\langle I,J \rangle {A[[x,y]]}.$$
We call an $A$-algebra a {\em formal $A$-algebra} if it is isomorphic to an $A$-algebra $A[[x]]/I$.  For two formal $A$-algebras $B=A[[x]]/I$ and $C=A[[y]]/J$  the completed tensor product can be defined as 
$B\hat\otimes_AC = A[[x,y]]/\langle I,J \rangle{A[[x,y]]}.$
It has the usual universal property of the tensor product in the category of formal $A$-algebras, analogous to the analytic tensor product for analytic algebras (cf. \cite[Chapter III.5]{GR71}). Thus, Definition \ref{def.ctp} generalizes the completed tensor product of formal $A$-algebras.}
\end{remark} 
\medskip

\subsection {Fibre and completed fibre}\label{ssec.qfm}

Let again $A$ be a ring and $M$ an $R=A[[x]]$-module. We introduce the completed fibre $\hat M(\fp)$ and the completed fibre dimension  $\hat d_\fp M$ of 
$M$ for $\fp \in \Spec A$ and compare it with the usual fibre $M(\fp)$ and the usual fibre dimension  $d_\fp M$. 

At the end of this section we give examples, showing that semicontinuity of $d_\fp (M)$ does not hold in general on $ \Spec A$, even if $A =\C[t]$ or $A=\Z$ (Examples \ref{ex.Kt} and \ref{ex.Z}).  
However, we show in the next sections \ref{ssec.1dim} and \ref{ssec.poly}  that, under some conditions, semicontinuity holds for the completed fibre dimension $\hat d_\fp (M)$.

\begin{notation}\label{nota} {\em
We have canonical maps 
$$A  \xhookrightarrow{\text {$j$}} R \xrightarrowdbl {\text {$\pi$}} R/\langle x \rangle \xrightarrow[\text {$\cong$}]{\text {$i$}} A,$$
with $i\circ\pi\circ j = id$ and for an ideal $I \subset R$ we set $\overline I := \pi(I).$
On the level of schemes we have the maps 
$\Spec A \xrightarrow[\text {$\cong$}]{\text {$i^*$}} V(\langle x \rangle)
  \xhookrightarrow{\text {$\pi^*$}}  \Spec R \xrightarrowdbl {\text {$j^*$}} \Spec A,$
with $i^*(\fp) = \px,  \  \
j^*(\px) = \px \cap A =\fp$ \ for  $\fp \in \Spec A$.  
We denote by
$$\fn_\fp := \langle \fp, x\rangle =  \langle \fp, x_1,...,x_n\rangle_R $$
the ideal in $R$ generated by $\fp  \in \Spec A$ and $x_1,...,x_n$.
The family
$$f:= j^*: \Spec R \to \Spec A$$
has the trivial section $ \sigma =(i\circ \pi)^*: \Spec A \to \Spec R$, $\fp \mapsto \fn_\fp$,
and the composition $h:=\pi\circ j: A\cong R/\x$ induces an isomorphism  
$$ h^*: V(\x) \xrightarrow {\text {$\cong$}} \Spec A,$$ 
the restriction of $f$ to $V(\x)$.  
We call $R_\fp := R\otimes_A A_\fp$  the {\em stalk} of $R$  over $\fp$. 
$R_\fp$ is not  a local ring, its local ring at $\fn_\fp$ is  
$(R_\fp)_{\fn_\fp} = R_{\fn_\fp}$ with residue field $k(\fn_\fp) = k(\fp)$ 
(by Lemma \ref{lem.cf} below).
} 
\end{notation}

If $M$ is an $R$-module, we call $M_\fp = M\otimes_A A_\fp$  the {\em stalk} of $M$  over $\fp$ and we are interested in the behavior of $M$ along the section 
$\sigma$. However, we are not interested in the $R(\fp)$-modules $M(\fp)$ since
$R(\fp)$ is not a power series ring (and does not behave nicely).  We are interested in the completed stalk $ \hat M_\fp$ and in the
completed fibres $\hat M(\fp)$, which we introduce now. 

\begin{definition} \label{def.cf}
Let $A$ be a ring, $R=A[[x]]$, $M$ an $R$-module and  $\fp \in \Spec A$. 
\begin{enumerate}
\item We set  $\hat R_\fp :=R\hat\otimes_A A_\fp$, a local ring isomorphic to $A_\fp[[x]]$ (Proposition \ref{prop.pctp}.1), and call the 
$\hat R_\fp$-module
$$ \hat M_\fp := M \hat\otimes_{A} A_\fp$$
 the  {\em completed stalk} of $M$  over $\fp$.
\item The ring $ \hat R(\fp) := R\hat\otimes_A k(\fp) $ is called the  {\em completed fibre of $R$  over $\fp$}. It is  a local ring isomorphic to
 $ k(\fp)[[x]]$ (Proposition \ref{prop.pctp}.1).
The $\hat R(\fp)$-module
$$ \hat M(\fp) := M \hat\otimes_{A} k(\fp) = \hat M_\fp \otimes_{A_\fp} k(\fp)$$
is called the  {\em completed fibre of $M$  over $\fp$.} 
\item $ \hat M(\fp)$ is a $k(\fp)$-vector space and we call its dimension
$$\hat d_\fp (M) := \dim_{k(\fp)} \hat M(\fp)$$
the {\em completed fibre dimension of $M$ over $\fp$}.
\item $M$ is called {\em quasi-completed-finite over $\fp$} if $\hat d_\fp (M) < \infty$.
\end{enumerate}
\end{definition}

The map $A \to R$ induces a map of local rings $A_\fp \to R_{\fn_\fp}$ and for an $R$-module $M$ we have the fibre 
$M(\fp)=M\otimes_{A} k(\fp)$ of $M$ w.r.t. $A \to R$
and the fibre 
$$M_{\fn_\fp}(\fp) = M_{\fn_\fp}\otimes_{A_\fp} k(\fp)=M_{\fn_\fp}/\fp M_{\fn_\fp}$$ 
of $M_{\fn_\fp}$ 
w.r.t. $A_\fp \to R_{\fn_\fp}$. The fibres are in general different but the completed fibres coincide by  Lemma \ref{lem.loc} if $M$ is finitely $R$-presented.

\medskip

Let us first compare the fibre $M(\fp)$  with its completed fibre $\hat M(\fp)$. 
\medskip

\begin{lemma}\label{lem.cf}\begin{enumerate}
For  any $R$-module $M$ the following holds.
\item [(i)]    
$\hat M_\fp = (M_\fp)^\wedge$ and $ \hat M(\fp)  =   M(\fp)^\wedge$.

\item [(ii)] $\fn_\fp$  is a prime ideal in $R$ with $\fn_\fp \cap A = \fp $ 
and the residue field of $\fn_\fp$ in $R$ satisfies $k({\fn_\fp}) = k(\fp).$
\item [(iii)] If  $\fn$ is any prime ideal in $R$ containing $\x$, then $\fn = \fn_\fp$
with $\fp = \fn \cap A \in \Spec A$.
\end{enumerate}
\end{lemma}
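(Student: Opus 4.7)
The plan is to handle the three parts essentially independently, leaning on Proposition~\ref{prop.pctp}.2 for (i) and on the structural identification $R/\fn_\fp \cong A/\fp$ for (ii) and (iii).

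For part (i), I would simply invoke Proposition~\ref{prop.pctp}.2, which identifies $M\hat\otimes_A N$ with the $\langle x\rangle$-adic completion $(M\otimes_A N)^\wedge$. Taking $N = A_\fp$ yields $\hat M_\fp = (M\otimes_A A_\fp)^\wedge = (M_\fp)^\wedge$, and $N = k(\fp)$ yields $\hat M(\fp) = (M\otimes_A k(\fp))^\wedge = M(\fp)^\wedge$. No further computation is needed here; the step is essentially just unwrapping the definitions.

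For part (ii), the key observation is that $\fn_\fp = \langle \fp, x\rangle_R$ fits into the isomorphism
\[
R/\fn_\fp \;=\; A[[x]]/\langle \fp, x_1,\ldots, x_n\rangle \;\cong\; A/\fp,
\]
obtained from the section $A \hookrightarrow R$, $R\twoheadrightarrow R/\x \cong A$ of Notation~\ref{nota} (so reduction mod $\x$ then mod $\fp$ collapses to $A\to A/\fp$). Primality of $\fn_\fp$ follows since $A/\fp$ is a domain. The composition $A\hookrightarrow R\twoheadrightarrow R/\fn_\fp\cong A/\fp$ is the canonical quotient map, whose kernel is $\fp$, giving $\fn_\fp\cap A=\fp$. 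Passing to the localization $R_{\fn_\fp}$, the residue field equals $\mathrm{Quot}(R/\fn_\fp)=\mathrm{Quot}(A/\fp)=k(\fp)$, which is $k(\fn_\fp)$.

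For part (iii), let $\fn\subset R$ be a prime with $\x\subseteq \fn$, and set $\fp = \fn\cap A$. The inclusion $\x\subset \fn$ gives $\fn = \pi^{-1}(\pi(\fn))$ where $\pi\colon R\twoheadrightarrow R/\x\cong A$ is the projection. Because $A\hookrightarrow R\xrightarrow{\pi} A$ is the identity, the image $\pi(\fn)\subset A$ coincides with $\fn\cap A = \fp$: any $a\in\pi(\fn)$ lifts to some $f\in\fn$ with $f-a\in\x\subset\fn$, so $a\in\fn\cap A$, and conversely $\fn\cap A\subset\pi(\fn)$ trivially. Therefore $\fn = \pi^{-1}(\fp) = \langle \fp, x\rangle_R = \fn_\fp$, as desired.

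I do not expect any real obstacle: part~(i) is a direct corollary of earlier results, and parts~(ii)--(iii) reduce to careful bookkeeping about the section $A\hookrightarrow R\twoheadrightarrow A$. The only mildly delicate point is ensuring that $\pi^{-1}(\fp)$ really equals $\langle\fp,x\rangle_R$ rather than the (a priori larger) set of power series whose coefficients all lie in $\fp$; this is immediate by splitting $f\in R$ as $f(0)+\sum_i x_i g_i$ and observing that the first summand lies in $\fp R$ iff $f(0)\in\fp$.
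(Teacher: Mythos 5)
Your proof is correct and follows essentially the same route as the paper: part (i) is read off from Proposition~\ref{prop.pctp}.2 (the paper cites \ref{prop.pctp}.1, but the module-level statement \ref{prop.pctp}.2 is indeed the right reference), and parts (ii)--(iii) rest on the identification $R/\fn_\fp\cong A/\fp$, with (iii) spelled out where the paper simply says ``obvious.'' Your closing remark that $\pi^{-1}(\fp)=\langle\fp,x\rangle_R$ via the splitting $f=f(0)+\sum_i x_i g_i$ is exactly the right point to make explicit.
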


\begin{proof}
Statement (i) follows from Proposition \ref{prop.pctp}.1. The first statement of
(ii)  follows since $R/\fn_{\fp} = A/\fp $ is an integral domain. Since $R/ \fn_\fp=A/\fp$
we have $k({\fn_\fp}) =  \Quot(R/ \fn_\fp) = \Quot(A/\fp) = k(\fp). $  (iii) is obvious.
\end{proof}

\begin{remark}
{\em We have strict flat inclusions $A_\fp \subsetneqq R_\fp \subsetneqq R_{\fn_\fp} \subsetneqq A_\fp[[x]]$ of rings that are Noetherian if $A$ is Noetherian.
 
The strictness is easy to see. E.g. 
$ g_0 + \sum_{|\alpha | \geq 1} (g_\alpha/h_\alpha) x^\alpha, \ g_0 \notin \fp$,
with arbitrary $h_\alpha \in R \smallsetminus \fn_\fp$, is a unit in  $A_\fp[[x]]$ but it is not contained in $R_{\fn_\fp}$,
where only finitely many different denominators are allowed.
We have $R_\fp = S^{-1} R$, with  $S$ the multiplictive set $A\smallsetminus \fp$ and $R_{\fn_\fp} = (R_\fp)_{\fn_\fp}$.   
Since localization preserves flatness (\cite[Corollary 3.6]{AM69}) and the Noether property (\cite[Proposition 7.3]{AM69}), the inclusions  $A_\fp \subset R_\fp \subset R_{\fn_\fp}$  are flat and the rings are Noetherain if $A$ is Noetherain.
The flatness of $A_\fp[[x]]$  over  $R_{\fn_\fp}$ follows, since the first  is the $\langle x \rangle$-adic completion of the second by Lemma \ref{lem.cf} (i). Since both rings are local, $R_{\fn_\fp} \subset A_\fp[[x]]$
is faithfully flat.

The rings  $R_\fp$ and $R_{\fn_\fp}$ are ``strange'' subrings of $A_\fp[[x]]$. The ring $A_\fp[[x]]$  is of interest in applications (cf. section \ref{sec.singinv}),  while the rings $R_\fp $ and $R_{\fn_\fp}$ are of minor interest. By the following Lemma \ref{lem.loc} we have $(R_\fp )^\wedge = (R_{\fn_\fp})^\wedge =A_\fp[[x]].$}
\end{remark}

\begin{example}\label{ex.2}{\em
As an example let $A=\k[t]$ and $R=A[[x]]$ with $t$ and $x$ one variable, $\fp =\langle 0 \rangle \in  \Spec A$. We have $A_\fp = k(\fp) = \k(t)$ and
$$R_\fp  = \k[t][[x]] \otimes_{\k[t]} \k(t) =  \{g / h \, | \, g \in \k[t][[x]],  h \in \k[t]\smallsetminus 0\},$$
$g = g_0 + \sum_{i \geq 1} g_i x^i, \ g_i \in \k[t]$, a subring strictly contained in  $R\hat\otimes_A A_\fp  = \k(t)[[x]].$
\begin{itemize}
\item $\x$ is contained in the Jacobson radical of  $R\hat\otimes_A A_\fp $ by Lemma \ref{lem.maxA}.
\item $\x$ is not contained in the Jacobson radical of $R_\fp = R\otimes A_\fp$.\\
To see this, note that the element $t-x$  is a unit in  $R\hat\otimes_A A_\fp $, 
 since $1/(t-x) = 1/t \sum_{i \geq 0} (x/t)^i$, but $1/(t-x)$ is not an element in $R_\fp $. 
 The ideal $\langle t-x\rangle$ is a maximal ideal in $R_\fp$, since  
 $R_\fp/\langle t-x\rangle \cong \k((t))$ (see Example \ref{ex.Kt}.2), but
 $x\notin \fm$ since otherwise $t \in \fm$, contradicting the fact that $t$ is a unit $R_\fp $. 
\item The rings $R_\fp$ and $R(\fp)$ are in general not local.\\
 Since $R_\fp/\x =\k(t)$, the ideals $\x$ and  $\langle t-x\rangle$ are two different maximal ideals and  $R_\fp =R(\fp)$  ($\fp =\langle 0 \rangle$) is not local.
\end{itemize}
}
\end{example}

\begin{lemma} \label{lem.loc}
Let $M$ be a finitely presented $R$-module and $\fp \in \Spec A$.
\begin{enumerate} 
\item We have isomorphisms
$$  \hat M_\fp    \cong M_{\fn_\fp} \hat\otimes_{A_\fp} A_\fp= M_{\fn_\fp} \hat\otimes_{A} A= (M_{\fn_\fp})^\wedge.$$
\item  $\hat M(\fp) \cong (M_{\fn_\fp}/\fp M_{\fn_\fp})^\wedge$.
\item If $M=\coker \big(A[[x]]^p \xrightarrow{\text {$T$}}A[[x]]^q\big)$ then
$$
\begin{array}{*{20}c}
\hat M_\fp  = M\hat\otimes_A A_\fp =\coker \big(A_\fp[[x]]^p \xrightarrow{\text {$T$}}A_\fp[[x]]^q\big),\\
\hat M(\fp) =\coker \big(k(\fp)[[x]]^p \xrightarrow{\text {$T$}}k(\fp)[[x]]^q\big).
\end{array}
$$
\end{enumerate}
Note that  $\hat R_\fp = A_\fp [[x]]=(R_\fp)^\wedge \cong (R_{\fn_\fp})^\wedge$ and
$\hat R(\fp) = k(\fp)[[x]]=R(\fp)^\wedge \cong (R_{\fn_\fp}/\fp R_{\fn_\fp})^\wedge$ are local rings but  $R_\fp \not\cong R_{\fn_\fp}$ and
 $R(\fp) \not\cong R_{\fn_\fp}/\fp R_{\fn_\fp}$, since  $R_\fp$ and $R(\fp)$ are in general not local. 
\end{lemma}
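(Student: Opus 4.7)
My plan is to attack the three parts in the reverse of the stated order, with parts 1 and 2 both resting on the same ``finiteness-of-truncation'' calculation.

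Part 3 is essentially a restatement of Corollary \ref{cor.pres}. Applying that corollary to the given finite presentation of $M$ with $B=A_\fp$ yields the first displayed isomorphism, and with $B=k(\fp)$ the second; the two base rings $A[[x]]\hat\otimes_A A_\fp = A_\fp[[x]]$ and $A[[x]]\hat\otimes_A k(\fp) = k(\fp)[[x]]$ come from Proposition \ref{prop.pctp}.1.

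For part 1 the crucial step is the truncated identification, valid for every $m\ge 1$:
\begin{equation*}
(M/\x^m M)\otimes_A A_\fp \;\cong\; M_{\fn_\fp}/\x^m M_{\fn_\fp}. \qquad(\ast)
\end{equation*}
To prove $(\ast)$ I would use that $R/\x^m = A[x]/\langle x\rangle^m$ is a free $A$-module of finite rank (basis: monomials in $x$ of degree less than $m$), so that $(R/\x^m)\otimes_A A_\fp = A_\fp[x]/\langle x\rangle^m$. This latter ring is \emph{local} with unique maximal ideal $\fn_\fp/\x^m$, because an element is invertible iff its constant term is a unit in $A_\fp$. Consequently $(R/\x^m)_\fp = (R/\x^m)_{\fn_\fp} = R_{\fn_\fp}/\x^m R_{\fn_\fp}$, and tensoring $M/\x^m M$ with this ring over $R/\x^m$ gives $(\ast)$. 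Passing to the inverse limit over $m$ and invoking Definition \ref{def.ctp} then yields
\begin{equation*}
\hat M_\fp = \lim\limits_{\longleftarrow}(M/\x^m M)\otimes_A A_\fp = \lim\limits_{\longleftarrow} M_{\fn_\fp}/\x^m M_{\fn_\fp} = (M_{\fn_\fp})^\wedge.
\end{equation*}
The two remaining identities $M_{\fn_\fp}\hat\otimes_{A_\fp} A_\fp = M_{\fn_\fp}\hat\otimes_A A = (M_{\fn_\fp})^\wedge$ are Corollary \ref{cor:cpl}(iii) applied to $M_{\fn_\fp}$, once one recalls from the discussion preceding Definition \ref{def.ctp} that the $\langle x\rangle$-adic topology on $M_{\fn_\fp}$ is the same whether it is viewed as a module over $R$, $R_{\fn_\fp}$ or $A_\fp[[x]]$.

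Part 2 follows by tensoring $(\ast)$ further with $k(\fp)$ over $A_\fp$: one obtains $(M/\x^m M)\otimes_A k(\fp) = M_{\fn_\fp}/(\fp+\x^m)M_{\fn_\fp}$, and the inverse limit of the latter over $m$ is precisely the $\langle x\rangle$-adic completion of $M_{\fn_\fp}/\fp M_{\fn_\fp}$. The only delicate point in the whole argument is establishing the locality of $(R/\x^m)_\fp$ that underpins $(\ast)$; once that is in hand, everything else reduces to formal manipulations of inverse limits and tensor products already supplied by the preceding propositions and corollaries.
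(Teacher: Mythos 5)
Your proof is correct. You and the paper are ultimately exploiting the same underlying fact — that at each truncation level $m$ the localization at the prime $\fp$ of $A$ and the localization at the prime $\fn_\fp$ of $R/\x^m$ coincide — but you package it differently. The paper establishes a ring-level identification $R_{\fn_\fp}/\x^m \cong A_\fp[[x]]/\x^m$ by an explicit common-denominator computation, passes to the inverse limit to get $R_{\fn_\fp}\hat\otimes_A A = A_\fp[[x]]$, and then imports a finite presentation of $M$ and invokes the right-exactness of $\hat\otimes$ (Corollary~\ref{cor.rex}) to descend to modules. You instead prove a module-level truncated identity $(\ast)$ for arbitrary $M$ by observing that $(R/\x^m)\otimes_A A_\fp = A_\fp[x]/\x^m$ is local with maximal ideal $\langle\fp,x\rangle$ (so localizing at $A\smallsetminus\fp$ already equals localizing at $\fn_\fp$), and then take the inverse limit. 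Your route is cleaner in two respects: it does not need Corollary~\ref{cor.rex} for part 1, and part 2 drops out immediately from $(\ast)\otimes_{A_\fp} k(\fp)$, avoiding the paper's detour through $M_{\fn_\fp}^\wedge = M_{\fn_\fp}\otimes_{R_{\fn_\fp}} R_{\fn_\fp}^\wedge$ (which silently uses finiteness and Noetherianity). Indeed, your argument for $(\ast)$ never uses the finite-presentation hypothesis, so your parts 1 and 2 are valid for arbitrary $R$-modules; the finite presentation is only needed for part 3. One small point worth making explicit: the expressions $M_{\fn_\fp}\hat\otimes_{A_\fp} A_\fp$ and $M_{\fn_\fp}\hat\otimes_A A$ should be read, as you note, via the $\x$-adic filtration on $M_{\fn_\fp}$, which is insensitive to whether one regards $M_{\fn_\fp}$ over $R$, $R_{\fn_\fp}$, or their completions — both expressions then trivially evaluate to $(M_{\fn_\fp})^\wedge$, so Corollary~\ref{cor:cpl}(iii) is not strictly needed there either.
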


\begin{proof}  1. 
The natural inclusion $R_{\fn_\fp} = A[[x]]_{\fn_\fp} \xhookrightarrow {\text{} }  A_{\fp} [[x]] $ is given as follows. 
Let $h/g \in R_{\fn_\fp}$ with $h, g \in R$,  $g \notin \fn_\fp$ and
 write $g =g_0 - g_1$ with $g_0 \in A$ and $g_1 \in \x R$.  Then $g \notin \fn_\fp = \langle \fp, x\rangle$ iff $g_0 \notin \fp$ and $g$ is a unit in  $R_{\fn_\fp}$ iff its image in $A_{\fp} [[x]] $ is a unit.
 We get
$$h/g = \frac{g_0^{-1}h}  {(1-g_1/g_0)} = g_0^{-1}h\sum_{i \ge 0} (g_1/g_0)^i \in A_{\fp} [[x]].$$
Now it is not difficult to see that the induced map 
$A[[x]]_{\fn_\fp}/\x ^m   \to  A_{\fp} [[x]] /\x ^m$ is bijective
(a finite sum  $\sum_{|\alpha|= 0}^{m-1} (a_\alpha/b_\alpha)x^\alpha,  a_\alpha, b_\alpha \in A,  b_\alpha \notin \fp$ in $A_{\fp} [[x]]$ can be written as
 $1/b \sum_{|\alpha|= 0}^{m-1} (a_\alpha b'_\alpha)x^\alpha$ with $b = \prod b_\alpha \notin \fn_\fp$, $b'_\alpha= b/b_\alpha \in A$, and hence is in $ A [[x]]_{\fn_\fp}$).
We get
$$ R_{\fn_\fp}\hat\otimes_A A = \lim\limits_{\longleftarrow} A[[x]]_{\fn_\fp}/\x ^m
\otimes_A A
= \lim\limits_{\longleftarrow}A_{\fp} [[x]] /\x ^m = A_{\fp} [[x]]$$
and also 
$R_{\fn_\fp}\hat\otimes_{A _\fp}A_\fp=A_{\fp} [[x]]= R_{\fn_\fp}^\wedge.$
Now apply Corollary \ref{cor.rex} to the presentation of $M$ and deduce the claim for $M\hat\otimes_A A_\fp$.

2. $\hat M(\fp)  = M\hat\otimes_A (A_{\fp} / \fp A_{\fp})= 
(M\hat\otimes_A A_{\fp}) / \fp (M\hat\otimes_A A_{\fp})=
M_{\fn_\fp}^\wedge/ \fp M_{\fn_\fp}^\wedge$
by Corollary \ref{cor:cpl} (iv) and  the first statement of this lemma.
Since $M_{\fn_\fp}$ is finitely presented over $R_{\fn_\fp}$ we have
$ M_{\fn_\fp}^\wedge= M_{\fn_\fp} \otimes_{R_{\fn_\fp}} R_{\fn_\fp}^\wedge$,
which implies the result.

3. This follows from Corollary \ref{cor.pres}.
\end{proof}

Over maximal ideals the fibre and the completed fibre coincide:

\begin{lemma}\label{lem.maxA}
Let $A$ be Noetherian and $M$ a finitely generated $R$-module. 
 For $\fa \subset A$ a {\em maximal} ideal the following holds.
\begin{enumerate}
\item [(i)] $$  \hat M(\fa) =  M(\fa), \ \  \hat d_\fa (M) = d_\fa (M). $$

\item  [(ii)] $R/ \fa R = R(\fa) = \hat R(\fa)  =k(\fa)[[x]]$ and  $\fa R$ is a prime ideal in $R$. 

\item  [(iii)] $\fn_\fa $ is a maximal ideal of $R$ and any maximal ideal  of $R$ is of the form $\fn_\fa $ for some $\fa \in \Max A$.  Hence $\langle x\rangle$ is 
contained in the  Jacobson radical  of $R$.
\item  [(iv)] $M(\fa) = M/\fa M \cong  M_{\fn_\fa}/\fa M_{\fn_\fa}$.
\end{enumerate}
\end{lemma}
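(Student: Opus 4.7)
The plan is to prove the four parts in the order (ii), (iii), (i), (iv), since later parts rely on identifications established in the earlier ones.

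For (ii), since $\fa$ is maximal we have $k(\fa) = A/\fa$, so the usual fibre $R(\fa) = R \otimes_A (A/\fa) = R/\fa R$ is identified with $k(\fa)[[x]]$ via the elementary isomorphism $A[[x]]/\fa A[[x]] \cong (A/\fa)[[x]]$ (using finite generation of $\fa$). Proposition \ref{prop.pctp}.1 with $B = k(\fa)$ gives $\hat R(\fa) = R \hat\otimes_A k(\fa) = k(\fa)[[x]]$ as well, so the usual and completed fibres of $R$ coincide, and $\fa R$ is prime because $k(\fa)[[x]]$ is a domain.

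The ideal $\fn_\fa = \langle \fa, x \rangle$ in (iii) is maximal because $R/\fn_\fa = A/\fa = k(\fa)$ is a field. The converse direction is the main obstacle: let $\fn \subset R$ be any maximal ideal and suppose for contradiction that some $x_i \notin \fn$; then the image $\bar x_i$ is invertible in the field $R/\fn$, so there is $g \in R$ with $1 - x_i g \in \fn$, but the image of $1 - x_i g$ under $R \twoheadrightarrow R/\x = A$ is $1$, a unit, and a power series with unit ``constant term'' is a unit in $A[[x]]$---contradicting $\fn$ being proper. Hence $\x \subseteq \fn$, and Lemma \ref{lem.cf}(iii) yields $\fn = \fn_\fa$ with $\fa := \fn \cap A$; since $R/\fn = A/\fa$ is a field, $\fa$ is maximal. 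The Jacobson radical assertion is then immediate.

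For (i), since $A$ is Noetherian the ring $R = A[[x]]$ is Noetherian, so the finitely generated $R$-module $M$ is finitely presented, and $k(\fa) = A/\fa$ is finitely presented over $A$. Corollary \ref{cor:cpl}(iv) then yields $\hat M(\fa) = M \hat\otimes_A k(\fa) = M \otimes_A k(\fa) = M(\fa)$, and equality of $k(\fa)$-dimensions follows. For (iv), $M/\fa M$ is a module over the local ring $R/\fa R \cong k(\fa)[[x]]$ (by (ii)) with maximal ideal $\fn_\fa/\fa R$, so every element of $R$ outside $\fn_\fa$ acts invertibly on $M/\fa M$; hence the canonical map $M/\fa M \to (M/\fa M)_{\fn_\fa} = M_{\fn_\fa}/\fa M_{\fn_\fa}$ is an isomorphism.
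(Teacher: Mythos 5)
Your proof is correct. Parts (i) and (ii) essentially coincide with the paper's argument (the paper also reduces (i) to Corollary \ref{cor:cpl}(iv), using that $A/\fa$ is a finite, hence finitely presented, $A$-module; it then deduces (ii) from (i), whereas you prove (ii) first via the isomorphism $A[[x]]/\fa A[[x]]\cong (A/\fa)[[x]]$ — and you are right to flag that finite generation of $\fa$ is what makes this isomorphism work). The differences are in (iii) and (iv). For (iii) the paper simply cites Matsumura and Atiyah--Macdonald; your self-contained argument — if $x_i\notin\fn$ then $1-x_ig\in\fn$ for some $g$, yet this element has unit constant term and is therefore a unit in $A[[x]]$ — is the standard proof and is a welcome expansion, correctly combined with Lemma \ref{lem.cf}(iii) to identify $\fn$ with $\fn_\fa$ for $\fa=\fn\cap A$ maximal. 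For (iv) the paper routes the computation through the completed tensor product and the cokernel presentation from Lemma \ref{lem.loc}, while you observe directly that $M/\fa M$ is a module over the local ring $R/\fa R\cong k(\fa)[[x]]$ with maximal ideal $\fn_\fa/\fa R$, so every element of $R\smallsetminus\fn_\fa$ already acts invertibly and localization at $\fn_\fa$ changes nothing. Your version of (iv) is more elementary, avoids completions altogether, and does not even use the Noetherian hypothesis beyond what (ii) needs; the paper's version has the merit of being uniform with the machinery used elsewhere (Lemma \ref{lem.loc}). Both are valid.
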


\begin{proof}
 (i) Since $\fa$ is maximal, $k(\fa) = A/\fa$ is a finite $A$-module. 
 Corollary \ref{cor:cpl} (iv) implies 
 $\hat M(\fa) = M \hat \otimes_A A/\fa  = M \otimes_A A/\fa= M(\fa).$\\
(ii) This follows from (i) and the fact that $R/\fa R =k(\fa)[[x]]$ is integral.\\
(iii) Cf.  \cite[\S 1, Example 1]{Mat86} and
\cite [Chapter 1, Exercise 5]{AM69}). \\
(iv) $ M/\fa M = M\otimes_A A/\fa = M(\fa)=\hat M(\fa) =
M \hat \otimes_A A/\fa =\coker (R_{\fn_\fa}^p\hat\otimes_A A/\fa \to R_{\fn_\fa}^q\hat\otimes_A A/\fa) = M_{\fn_\fa}/\fa M_{\fn_\fa}$, as in the proof of 
Lemma \ref{lem.loc}. 
\end{proof}

\medskip

As a first step to semicontinuity we show below (Lemma \ref{lem.open}) that the vanishing locus of
$\hat d_\fp(M)$ is open.
For an arbitrary $A$-module $M$ $$\Supp_A(M) := \{\fp \in \Spec A\ | \ M_\fp \neq 0\}$$  denotes the support of $M$  and 
$\Ann_A(M) = \{f \in A \, | \, fM=0 \}$ the annihilator ideal of $M$. 

In general  $\Supp_A (M)$ is not closed in $\Spec A$, but  if $M$ a finitely generated $A$-module, then it is well known that $\Supp_A(M) = V(\Ann_A(M))$, which is closed in $\Spec A$. More generally we have:

\begin{remark} \label{rem.ann}  {\em 
 For  any $A$-module $M$ we have 
 $$\Supp_A (M) \subset V(\Ann_A(M)).$$ 
 If $R$ is an $A$-algebra  and $M$ an $R$-module, then $\Ann_A(M) = \Ann_R(M) \cap A$. If 
  $M$  is a finite $R$-module then  
  $$\Supp_A(M) = V(\Ann_A(M))$$
and  hence $\Supp_A(M)$ is closed in $\Spec A$. 

To see the first claim,  let $\fp \in \Spec A$. Note that $\fp \notin \Supp_AM \Leftrightarrow M_\fp=0 \Leftrightarrow \forall m\in M \  \exists f \in A$, $f \! \notin \! \fp$,  $fm=0$ and that 
$\fp \notin V(\Ann_A(M)) \Leftrightarrow  \Ann_A(M) \nsubseteq \fp   \Leftrightarrow   \exists f \! \notin \! \fp, f M= 0 $. Hence $\fp \notin V(\Ann_A(M))$ implies $\fp \notin \Supp_A(M)$, i.e. $\Supp_A (M) \subset V(\Ann_A(M))$.

Now let $M$ be generated over $R$ by $m_1,\dots,m_k \in M$. If $M_\fp=0$, choose  $f_i \in A, f_i \! \notin \! \fp,  f_im_i=0.$ Then 
$f=f_1\cdots f_k \! \notin \! \fp$ satisfies $fM=0 $ and hence $f\in \Ann_A(M)$. We get $\fp \notin V(\Ann_A(M))$ and hence the other inclusion $\Supp_A (M) \supset V(\Ann_A(M))$.
}
\end{remark}

In our situation for $R=A[[x]]$ and $M$ finitely $R$-presented we have (possibly strict) inclusions (cf. Lemma \ref{lem.cf})
$$\{ \fp \in \Spec A \ | \ \hat d_\fp(M) \neq 0\} \subset \{\fp \,|\,d_\fp(M)\neq0\}  \subset \Supp_A (M),$$
where the first (Lemma \ref{lem.open}) and the last (Remark \ref{rem.ann}) sets are closed in $\Spec A$, while the middle set may not be closed (Example \ref{ex.Kt}.4).

\begin{lemma}\label{lem.open} Let $M$ be a finitely presented $R=A[[x]]$-module.  
\begin{enumerate}
\item We have (possibly strict) inclusions 
$$\{ \fp \in \Spec A \ | \ \hat d_\fp(M) \neq 0\} \subset \{\fp \,|\,d_\fp(M)\neq0\}  \subset \Supp_A (M),$$
where the first and the last  sets are closed in $\Spec A$, while the middle set may not be closed.
\item The map $\Supp_R (M) \to \Supp_A (M) ,  \fn \mapsto \fn \cap A,$ is dominant and
induces a  homeomorphism
$$V(\x) \cap \Supp_{R} (M) \xrightarrow{\text {$\approx$}} 
\{ \fp \in \Spec A \ | \ \hat d_\fp (M) \neq 0\}.$$
 Hence $\{ \fp \in \Spec A \ | \ \hat d_\fp (M) =0\}$, the {\em vanishing locus} of $\hat d_\fp (M)$, is open in $\Spec A$. 
\item  Let $A'=A/\Ann_A(M)$, $R'=A'[[x]]$ and denote by $M'$ the module $M$ considered as $A'$-module. Then $M'$ is a finitely presented $R'$-module and for 
$\fp \in \Spec(A') \subset  \Spec(A)$ we have $M_\fp = M'_\fp$, 
$M(\fp) =M'(\fp)$, $\hat M_\fp =\hat M'_\fp$, and $\hat M(\fp) =\hat M'(\fp)$. 
For 
$\fp \in \Spec(A) \setminus  \Spec(A')$ the modules $M_\fp,  
M(\fp), \hat M_\fp$, and $\hat M(\fp)$ vanish.

\noindent 
{\em In particular, we may consider $M$ as an $A'$-module, whenever we study the fibres or the completed fibres of $M$.}
 \end{enumerate}
\end{lemma}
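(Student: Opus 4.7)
The plan is to deduce everything from the trivial section $\sigma:\Spec A\to\Spec R$, $\fp\mapsto\fn_\fp$, which by Lemma \ref{lem.cf}(iii) identifies $\Spec A$ with the closed subset $V(\x)\subset\Spec R$, together with the closed-support identity $\Supp_R(M)=V(\Ann_R(M))$ from Remark \ref{rem.ann} (applicable since $M$ is a finite $R$-module).

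I would prove (2) first. The core statement is the equivalence
\[
\hat d_\fp(M)\neq 0\ \Longleftrightarrow\ \fn_\fp\in\Supp_R(M),\ \text{i.e.,}\ M_{\fn_\fp}\neq 0.
\]
By Lemma \ref{lem.loc}(2), $\hat M(\fp)\cong(M_{\fn_\fp}/\fp M_{\fn_\fp})^{\wedge}$, the completion being taken with respect to $\x$, which generates the maximal ideal of the Noetherian local ring $R_{\fn_\fp}/\fp R_{\fn_\fp}$. Faithful flatness of that completion over this local ring yields $\hat M(\fp)=0\Leftrightarrow M_{\fn_\fp}/\fp M_{\fn_\fp}=0$. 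Since $\fp R_{\fn_\fp}\subset\fn_\fp R_{\fn_\fp}$, a vanishing $\fp M_{\fn_\fp}=M_{\fn_\fp}$ forces $\fn_\fp M_{\fn_\fp}=M_{\fn_\fp}$, whence $M_{\fn_\fp}=0$ by Nakayama; conversely $M_{\fn_\fp}=0$ trivially gives $M_{\fn_\fp}/\fp M_{\fn_\fp}=0$. The restriction of $\sigma$ to $\{\fp:\hat d_\fp(M)\neq 0\}$ is thus a continuous bijection onto $V(\x)\cap\Supp_R(M)$, with continuous inverse the restriction of $j^*$; this gives the homeomorphism. Dominance of $\Supp_R(M)\to\Supp_A(M)$ follows from the injection $A/\Ann_A(M)\hookrightarrow R/\Ann_R(M)$ (using $\Ann_A(M)=\Ann_R(M)\cap A$ from Remark \ref{rem.ann}), since injective ring maps are dominant on $\Spec$. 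Openness of the vanishing locus is then immediate: $\Supp_R(M)$ is closed, $V(\x)\cap\Supp_R(M)$ is closed in $V(\x)$, and the homeomorphism transports this to a closed subset of $\Spec A$.

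For (1), the inclusions are automatic: $\hat M(\fp)=M(\fp)^{\wedge}$ (Lemma \ref{lem.cf}(i)) vanishes with $M(\fp)$, and $M(\fp)=M_\fp/\fp M_\fp$ vanishes with $M_\fp$. Closedness of the outer sets is Remark \ref{rem.ann} and part (2); non-closedness of the middle set is the content of Example \ref{ex.Kt}.4. For (3), if $\fp\not\supset\Ann_A(M)$, then some $f\in\Ann_A(M)\setminus\fp$ kills $M$ while being a unit in $A_\fp$, forcing $M_\fp=0$ and hence, via Lemma \ref{lem.cf}(i), also $\hat M_\fp$, $M(\fp)$ and $\hat M(\fp)$ to vanish. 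If $\fp\supset\Ann_A(M)$, so $\fp\in\Spec(A')$, then $A'_\fp=A_\fp/\Ann_A(M)A_\fp$ combined with $\Ann_A(M)\cdot M=0$ gives $M\otimes_A A_\fp=M\otimes_{A'}A'_\fp$, and analogously with $k(\fp)$ in place of $A_\fp$ and for both completed tensor products; hence $M_\fp=M'_\fp$, $M(\fp)=M'(\fp)$, $\hat M_\fp=\hat M'_\fp$, $\hat M(\fp)=\hat M'(\fp)$. Finite presentation of $M'$ over $R'=A'[[x]]$ comes from tensoring a finite $R$-presentation of $M$ with $R'=R/\Ann_A(M)R$, using $M\otimes_R R'=M/\Ann_A(M)M=M$.

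The main obstacle is the equivalence $M_{\fn_\fp}\neq 0\Leftrightarrow\hat d_\fp(M)\neq 0$: it requires moving the question off of the non-local rings $R_\fp$ and $R(\fp)$ (which are the natural home of the fibre $M(\fp)$) onto the Noetherian local ring $R_{\fn_\fp}$, where Nakayama and the faithful flatness of completion give the vanishing criteria we need. Once this bridge is built via Lemma \ref{lem.loc}(2), the remainder of the proof is topological and formal bookkeeping.
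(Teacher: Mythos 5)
Your proof is correct, and parts (1) and (3), as well as the dominance argument and the topological bookkeeping in (2), coincide with what the paper does. The genuinely different step is your proof of the identity $\{\fp \mid \hat d_\fp(M)\neq 0\}=j^*\bigl(V(\x)\cap\Supp_R(M)\bigr)$. The paper argues via the presentation matrix: $\hat M(\fp)=0$ iff the induced matrix over $k(\fp)[[x]]$ is surjective, iff the $0$-th Fitting ideal $F_0\subset A[[x]]$ of $M$ contains an element whose constant term lies outside $\fp$ (using compatibility of Fitting ideals with base change), iff $\fn_\fp\notin V(F_0+\x)$; combined with $\Supp_R(M)=V(F_0)$ this gives the claim. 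You instead route through Lemma \ref{lem.loc}(2), Nakayama at the local ring $R_{\fn_\fp}$, and the fact that completion does not kill a nonzero finitely generated module. Both routes are valid. The Fitting-ideal argument is hypothesis-free, whereas your appeal to ``faithful flatness of the completion'' tacitly uses that $R_{\fn_\fp}/\fp R_{\fn_\fp}$ is Noetherian (automatic once $A$ is Noetherian, as in all applications of the lemma, but not among its literal hypotheses). Note, however, that the only nontrivial implication you need, namely $M_{\fn_\fp}\neq 0\Rightarrow\hat M(\fp)\neq 0$, already follows from Nakayama together with the surjection $(M_{\fn_\fp}/\fp M_{\fn_\fp})^\wedge\twoheadrightarrow M_{\fn_\fp}/\fn_\fp M_{\fn_\fp}$, with no flatness or Noetherian input, so your approach can be made equally robust with a one-line adjustment.
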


\begin{proof} 1. The first inclusion follows from Lemma \ref{lem.cf}(i), the second from the definition. For an example where these inclusions are strict, see
Example \ref{ex.Kt}.2, 3 and \ref{ex.Kt}.4. The first set is closed by item 2. and the third by Remark \ref{rem.ann}. In Example \ref{ex.Kt}.4 the middle set is not closed.

2. Since $\Ann_A(M) = \Ann_R(M) \cap A $, the map $ A/ \Ann_A(M)
\to R/\Ann_R(M)$ is injective and induces therefore a dominant morphism 
$ \Spec (R/\Ann_R(M)) \to  \Spec (A/\Ann_A(M))$. The first claim follows hence from Remark \ref{rem.ann}.

For the second claim consider  $ A[[x]]^p \xrightarrow{\text {$T$}} A[[x]]^q \to  M \to 0$, a  presentation of $M$ with $T=(t_{ij}), \, t_{ij} \in A[[x]]$, and let $\fp \in  \Spec A$. Then 
$ k(\fp)[[x]]^p \xrightarrow{\text {$T'$}} k(\fp)[[x]]^q \to  \hat M(\fp) \to 0$
is a presentation of $\hat M(\fp)$ with $T'=(t'_{ij}), \,t'_{ij} \in k(\fp)[[x]]$, the induced map  (Corollary \ref{cor.pres}). 

Now $\hat M(\fp) = 0$ iff $T'$ is surjective, i.e., iff the 0-th Fitting ideal (the ideal of $q$-minors) of $T'$  contains a unit $u'  \in k(\fp)[[x]]$. Write $u'$ as
$u' = u'_0+ u'_1$  with
$u'_0 \in k(\fp)\smallsetminus \{0\}, u'_1 \in \x k(\fp)[[x]]$. Since Fitting ideals are compatible with base change, the
$0$-th Fitting ideal $F_0 \subset A[[x]]$  of $M$ contains an element $u=u_0 + u_1 \in A[[x]]$ with
$u_0 \in A, u_1 \in \x A[[x]]$,
that maps to $u'$ under $A[[x]] \to A_\fp[[x]] \to A_\fp/\fp A_\fp[[x]]$. Hence $u'$ is a unit iff $u_0\notin \fp$,
i.e., iff $\langle x,\fp\rangle \notin V(F_0+\x)$. The result follows since 
$\Supp_R (M) = V(F_0 )$.

3. Any $R$-presentation of $M$ induces obviously an $R'$-presentation of $M'$.
The equalities $M_\fp = M'_\fp$ and  $M(\fp) =M'(\fp)$ for $\fp \in \Spec(A')$ are clear, the equalities 
$\hat M_\fp =\hat M'_\fp$ and $\hat M(\fp) =\hat M'(\fp)$ follow from this and from Lemma \ref{lem.cf}(i). Since $\Supp_A(M)=\Spec(A')$ by Remark \ref{rem.ann}, 
$M_\fp = M(\fp)=0$ for $\fp \notin \Spec(A')$ and Lemma \ref{lem.cf}(i) implies then
$\hat M_\fp = \hat M(\fp)=0$. 
\end{proof}  

At the end of this section we give two examples where $d_\fp(M)$ is not semicontinuous on $\Spec A$ while $\hat d_\fp(M)$ is. The examples show also that $\hat M(\fp)=0$ may happen for $ M(\fp) \ne 0$. 

\begin{example} \label{ex.Kt} {\em
Let $A=K[t]$, $K$ an algebraically closed field, $R=A[[x]]$, and 
$M = R/\langle t-x\rangle \cong K[[t]]$ as $A$-module via   $f(x,t) \mapsto f(t,t)$, with $t$ and $x$ one variable each. The following properties illustrate the difference between the fibres and the completed fibres. Let $\fa =\langle t-c\rangle, c \in K$, denote the maximal ideals in $A$.
By Lemma \ref{lem.maxA} $\hat M( \fa)  =  M(\fa) = M/ \fa M$ which is isomorphic to $K[[t]]/ \langle t-c\rangle.$ Hence $M(\langle t\rangle ) = K$ and  $M(\langle t-c\rangle ) = 0$ for  $c\neq 0$.

\begin{enumerate}

\item $M$ is not finitely generated over $A$, $d_\fa (M) = \hat d_\fa (M) <\infty$ for
$\fa \in \Max A$ and $d_\fa (M)$ is semicontinuous on $ \Max A$:\\
$K[[t]]$  is not finite  over $K[t]$ and $d_{\fa} (M) =1$ if  $c=0$
and  $0$ if $c\neq 0$.
\item $d_\fp (M)$ is not semicontinuous on $\Spec A$:\\
The prime ideal $\langle 0 \rangle$ is contained in every neighbourhood of  $ \fa =\langle t\rangle$ in  $\Spec A$. 
It satisfies $k(\langle 0 \rangle) = K(t)$ and we get
$M(\langle 0 \rangle) \cong K[[t]]\otimes_{A}K(t) = K((t))$, the field of formal Laurent series. Since $\dim_{K(t)} K((t)) = \infty$,  $d_{\langle 0 \rangle}(M) = \infty$, while  $d_{\fa} (M) \leq 1$ for $\fa \in \Spec A \smallsetminus \langle 0 \rangle$.
\item $\hat d_\fp (M)$ is semicontinuous on $\Spec A$: \\
We have  $\hat M(\langle 0\rangle)  =
K(t)[[x]]/\langle t-x\rangle$ by Corollary \ref{cor.pres}. Since $t$ is a unit
in $K(t)$,  $\hat d_{\langle 0\rangle}(M) = 0$.
\item $M({\fa}) = M_{\fa} /\fa M_{\fa} = 0$ does not imply $M_{\fa} =0$:\\
In fact, we have $M_{\langle t-c\rangle} \cong K[[t]]_{\langle t-c\rangle}$ as a $K[t]$-module. For $c \neq0$ we get  $ K[[t]]_{\langle t-c\rangle} = K((t))$ (since $t \notin \langle t-c\rangle$) while 
$K[[t]]_{\langle t-c\rangle}/ {\langle t-c\rangle}K[[t]]_{\langle t-c\rangle}=
K((t)) / {\langle t-c\rangle} = 0$.
We have $ \{\fp \,|\,\hat d_\fp(M)\neq0\} = \{\langle t \rangle\} \subsetneqq
\{\fp \,|\,d_\fp(M)\neq0\}= \{\langle t \rangle , \langle 0 \rangle\}  \subsetneqq \Supp_A (M) = \Spec A$.

\item $M$ is flat over $A$. By 1. and 3. we cannot expect any continuity of $d_\fp (M)$ 
or $\hat d_\fp (M)$ on $\Max A$ or on $\Spec A$ for flat $A$-modules.
\item The quasi-finite locus of $A \to A[[x]]/\langle t-x\rangle$ is not open in $\Spec A$:\\
The quasi-finite locus $\{\fp \in \Spec A \, | \,  d_\fp (M) < \infty \}$ is $\Spec A \smallsetminus \langle 0 \rangle$ by 1. and 2. Recall that if $B$ is a ring of finite type over $A$, then the quasi-finite locus of $A \to B$ is open by Zariski's main Theorem (cf. \cite[10.122]{Sta19}).
\item The quasi-completed-finite locus of $A \to A[[x]]/\langle t-x\rangle$ is open in $\Spec A$:\\
 Let us call $\{\fp \in \Spec A \, | \,  \hat d_\fp (M) < \infty \}$ the  {\em quasi-completed-finite locus}. It is $\Spec A$ in our example. \\
  In general, if semicontinuity of $\hat d_\fp (M)$ holds (Corollary \ref{cor.qf}), then the quasi-completed-finite locus is open. 
\end{enumerate} 
}
\end{example} 

\begin{example} \label{ex.Z} {\em
The following example may be of interest for arithmetic and computational purposes. It goes along similar lines as Example \ref{ex.Kt}.

Let $A=\Z$, $R=\Z[[x]]$, and $M = R/\langle x-p\rangle$, $p \in \Z$ a prime number. Since $R= \lim\limits_{\longleftarrow}\Z[x]/\langle x \rangle^n $ we obtain
$M = \lim\limits_{\longleftarrow}\Z/p^n = \hat \Z_{\langle p\rangle}$, the ring of 
$p$-adic integers.

Now let $\langle q\rangle \in \Max \Z$.\\ 
If $q\ne p$ then $q$ is a unit in 
$\Z_{\langle p\rangle}$ hence in $\hat \Z_{\langle p\rangle}$ and 
$M\otimes_\Z \Z/q  = M/\langle q\rangle M  =\hat \Z_{\langle p\rangle}/q \hat \Z_{\langle p\rangle} =0.$ \\
If $q = p$ then $M\otimes_\Z \Z/p = \hat \Z_{\langle p\rangle}/p \hat \Z_{\langle p\rangle} =  \Z/p.$ \\
Hence $d_{\langle q\rangle}(M) = \dim_{\Z/q}M\otimes_\Z \Z/q$ is  0 if  $q\ne p$ and 1 if  $q= p$. 

On the other hand, looking at the prime ideal $\langle 0\rangle$ we get 
$$\hat M(\langle 0\rangle)= M\hat \otimes_\Z \Q =\Q[[x]]/\langle x-p\rangle=0,$$
while
$$M(\langle 0\rangle)= M\otimes_\Z \Q = \hat \Z_{\langle p\rangle}\otimes_\Z \Q =Quot(\hat \Z_{\langle p\rangle}) $$ 
has dimension $d_{\langle 0\rangle}(M) = \dim_{\Q} Quot( \hat \Z_{\langle p\rangle}) =  \infty$.

To see the last equality in the formula for $M(\langle 0\rangle)$ one checks that the following diagram has the universal property of the tensor product:
\[
\begin{xymatrix}{
   \hat \Z_{\langle p\rangle} \ar[r]^-{i_1} & Quot( \hat \Z_{\langle p\rangle})=(\hat \Z_{\langle p\rangle})_p  \\
   \Z \ar[r]_{j_2} \ar[u]^{i_2}  & \Q   \ar[u]_{j_1}& \hspace{-1.7cm} .}
   \end{xymatrix}
\]
Here $i_1, i_2$ and $j_2$ are the canonical inclusions and $j_1$ is given as follows: if
$\alpha, \beta \in \Z$, $p \nmid \beta$, then 
$j_1(\frac{\alpha}{p^m\beta}) = \frac{1}{p^m}\frac{\alpha}{\beta}$, 
$\frac{\alpha}{\beta} \in \hat \Z_{\langle p\rangle}$ since $p \nmid \beta$.
The universality of the diagram is easily seen. If $T$ is a $\Z-algebra$ and $\phi:  \hat \Z_{\langle p\rangle} \to T$ and 
$\psi: \Q \to \Z$ are $\Z$-algebra homomorphisms then the morphism 
$\sigma: (\hat \Z_{\langle p\rangle})_p \to T$, given as 
$\sigma(\alpha/p^m) = \phi(\alpha)\psi(1/p^m), p \nmid \alpha$ is the unique one, making the obvious diagram commutative.
}
\end{example} 
\medskip

\subsection{Semicontinuity over a 1-dimensional ring}\label{ssec.1dim}

In this section $A$ will be Noetherian and $M$ a finitely generated $R$-module
(unless we say otherwise). Then $R = A[[x]]$ is Noetherian and $M$ is finitely presented as $R$-module.
At the moment we can prove the semicontinuity of $d_\fq(M)$ on $\Spec A$ 
in full generality only under certain assumptions on the irreducible components of $\Supp_R(M)$. This includes the case $\dim A =1$ where $\dim A$ denotes the Krull dimension of $A$. 
The case  of arbitrary Noetherain $A$ is treated in the next section under the assumption that the presentation matrix of $M$ is algebraic. 
\medskip

In an important special case semicontinuity holds for arbitrary $A$:

\begin{proposition} \label{prop.ann}
Let $A$ be Noetherian and $M$ a finitely generated $R$-module. 
\begin{enumerate}
\item If \, $\Supp_R (M) \subset V(\x)$ then  $M$ is finitely generated  over $A$ and
$\hat M(\fq) = M(\fq)$ for all $\fq \in \Spec A$. In particular 
semicontinuity of $\hat d_\fp(M)= d_\fp(M)$ holds at 
 any  $\fp \in \Spec A$.
 \item
If $M$ is finitely generated as an $A$-module (in particular $d_\fp(M) < \infty$ for  $\fp \in \Spec A$),
 then  $\hat d_\fp(M)\leq d_\fp(M)$ and  $\hat d_\fp(M)$ (as well as $d_\fp(M)$) is semicontinuous at  any  $\fp \in \Spec A$.
 \end{enumerate}
\end{proposition}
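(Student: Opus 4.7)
For (1) the idea is that the support condition forces a power of $\x$ to annihilate $M$, collapsing the completed tensor product to the ordinary one. Because $M$ is finitely generated over $R$, Remark~\ref{rem.ann} gives $\Supp_R M = V(\Ann_R M)$, so $\Supp_R M\subset V(\x)$ means $\x\subset\rad(\Ann_R M)$; as $\x$ is finitely generated this upgrades to $\x^m\subset\Ann_R M$ for some $m\ge 1$. Hence $M$ is a module over $R/\x^m$, which is a finite free $A$-module, so $M$ itself is finitely generated---and, $A$ being Noetherian, finitely presented---over $A$. Proposition~\ref{prop.pctp}.5 then gives $\hat M(\fq)=M\otimes_A k(\fq)=M(\fq)$ for every $\fq\in\Spec A$, and Lemma~\ref{lem.Afin} applied to the finitely presented $A$-module $M$ yields the semicontinuity of the common value $d_\fq M=\hat d_\fq M$.

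For (2) the semicontinuity of $d_\fp M$ is immediate from Lemma~\ref{lem.Afin}. The strategy for the completed fibre dimension is to replace $\hat M(\fp)=M(\fp)^\wedge$ by the ordinary fibre of an auxiliary $A$-module $M^{(K)}:=M/\x^K M$, with a single exponent $K$ that works in an entire neighbourhood of $\fp$. Put $K:=d_\fp M$. Then $M(\fp)$ is a $k(\fp)$-vector space of dimension $K$, so the descending chain $\{\x^m M(\fp)\}_{m\ge 0}$ of subspaces can strictly decrease at most $K$ times; hence $\x^K M(\fp)=\x^{K+1}M(\fp)=\bigcap_m \x^m M(\fp)$. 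Combined with Lemma~\ref{lem.cf}(i) this gives
$$\hat M(\fp)=M(\fp)^\wedge=M(\fp)/\x^K M(\fp)=M^{(K)}(\fp),$$
so $\hat d_\fp M=d_\fp(M^{(K)})\le d_\fp M$.

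For semicontinuity of $\hat d_\fq M$ at $\fp$, first choose a neighbourhood $U_1\ni\fp$ on which $d_\fq M\le K$ (Lemma~\ref{lem.Afin}); the same counting argument shows that for every $\fq\in U_1$ the filtration $\{\x^m M(\fq)\}_m$ has already stabilised by step $K$, so $\hat d_\fq M=d_\fq(M^{(K)})$. Since $M^{(K)}$ is itself finitely generated (hence finitely presented) over $A$, a second application of Lemma~\ref{lem.Afin} produces a neighbourhood $U_2\ni\fp$ with $d_\fq(M^{(K)})\le d_\fp(M^{(K)})=\hat d_\fp M$; on $U_1\cap U_2$ we therefore obtain $\hat d_\fq M\le\hat d_\fp M$. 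The main obstacle is exactly this uniformity in the stabilisation index: pointwise one only knows that the $\x$-adic filtration of $M(\fq)$ stabilises eventually, and \emph{a priori} the stabilisation step could depend on $\fq$. What saves the argument is that upper semicontinuity of the ordinary fibre dimension bounds this index uniformly by $K$ throughout a neighbourhood, so a single Noetherian $A$-module $M^{(K)}$ simultaneously computes all nearby completed fibres.
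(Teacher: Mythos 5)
Your proposal is correct and follows essentially the same route as the paper: part (1) via $\x^m\subset\Ann_R(M)$, Proposition~\ref{prop.pctp}.5 and Lemma~\ref{lem.Afin}, and part (2) by showing the $\x$-adic filtration of $M(\fq)$ stabilises by a step bounded uniformly on a neighbourhood by $d_\fp(M)$, then applying Lemma~\ref{lem.Afin} to the finite $A$-module $M/\x^K M$. The only cosmetic difference is that you track the descending chain of subspaces $\x^m M(\fq)$ while the paper tracks the increasing dimensions of the quotients $M/\x^m M\otimes_A k(\fq)$; your version is, if anything, slightly more explicit about why stabilisation occurs by step $K$ (once $\x^m M(\fq)=\x^{m+1}M(\fq)$ the chain is constant thereafter).
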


\begin{proof}
1. Since  $V(\Ann_R(M)) = \Supp_R (M)  \subset V(\x)$, we have 
$\x \subset \sqrt {\Ann_R(M)}$  and 
there exists an $m$ such that  $\x^m \subset \Ann_R(M)$. 
We get a surjection 
$$A[[x]]/\langle x \rangle^m  \to R/\Ann_R(M)$$
and since $A[[x]]/\langle x \rangle^m$ is finitely generated over $A$  this holds for $R/\Ann_R(M)$ too.  
Since  $M$ is finitely generated over $R/\Ann_R(M)$ it is finitely generated over $A$ and hence  finitely presented. 
By Lemma \ref{lem.Afin} there there is an open neighborhood $U$ of $\fp$ in 
$\Spec A$ such that 
	$d_\fq (M) \le d_\fp (M),  \ \fq \in U.$	
By Proposition \ref{prop.pctp}.5, $\hat M(\fq) = M(\fq)$ for all $\fq \in \Spec A$, showing the claim.

2.
Let $m<m'$ be two strictly positive integers and consider the natural surjective maps
\begin{center}
$R \longrightarrow R/\langle x\rangle^{m'} \longrightarrow R/\langle x \rangle^m$.
\end{center}
By the right exactness of the tensor product, they induce surjective maps
\begin{center}
$M \longrightarrow M/\langle x\rangle^{m'}M \longrightarrow M/\langle x \rangle^mM$.
\end{center}
and 
\begin{center}
$M(\fp)=M\otimes_A k(\fp)\longrightarrow M/\langle x\rangle^{m'}M \otimes_Ak(\fp)\xrightarrow{\pi_{m,m'}(\fp)} M/\langle x \rangle^mM \otimes_Ak(\fp)$.
\end{center}
Since $M$ is finitely generated over $A$, all the modules appearing in the last sequence are finite-dimensional $k(\fp)$-vector spaces, with $\dim_{k(\fp)}(M(\fp))=d_{\fp}(M)$ by definition. Since $\pi_{m,m'}(\fp)$ is surjective for all $m<m'$, the elements of the inverse system $\{M/\langle x \rangle^mM\otimes_Ak(\fp)\}_m$ of finite-dimensional $k(\fp)$-vector spaces have dimensions increasing with $m$ and bounded above by $d_{\fp}(M)$. Thus $\dim_{k(\fp)}\big(M/\langle x \rangle^mM\otimes_Ak(\fp)\big)$ stabilizes for large $m$. Hence, for $m$ large
\begin{center}
$M/\langle x \rangle^mM\otimes_Ak(\fp)=\underset{m'}\varprojlim(M/\langle x \rangle^{m'}M\otimes_Ak(\fp))=M\hat\otimes_Ak(\fp)=\hat M(\fp)$.
\end{center}
This implies $\hat d_{\fp}(M)=\dim_{k(\fp)}\hat M(\fp) \leq d_{\fp}(M)$. 

To see the semicontinuity of $\hat d_\fp(M)$, we use the semicontinuity of 
$d_{\fp}(M)$ (by Lemma \ref{lem.Afin}). It follows that there exists an open neighbourhood $U$ of $\fp$ such that the sequence $\{\dim_{k(\fq)}\big(M/\langle x \rangle^mM\otimes_Ak(\fq)\big)\}_m$ is  bounded by $d_{\fp}(M)$ simultaneously 
for all $\fq \in U$.
Hence,
$$d_\fq(M/\x^mM) = \dim_{k(\fq)} (M/\langle x \rangle^mM\otimes_Ak(\fq))=\dim_{k(\fq)}(\hat M(\fq))=\hat d_{\fq}(M)$$
 for a fixed  large  $m$ and $\fq \in U$. Since $M/\x^mM$ is finitely $A$-generated,
 Lemma \ref{lem.Afin} implies that $d_\fp(M/\x^mM)$ is semicontinuous,  and so is $\hat d_{\fp}(M)$.
\end{proof}

The inequality $\hat d_\fp(M)\leq d_\fp(M)$ of item 2. and its proof, as well as Example \ref{ex.referee}, were suggested to us by the anonymous referee.
 Note that $\hat d_\fp(M) = d_\fp(M)$ for $\fp$ a maximal ideal (by Lemma \ref{lem.maxA}), but that
$\hat d_\fp(M) < d_\fp(M)$ may happen by Example \ref{ex.complete} for $\fp$ not maximal.\\

Before we formulate the next result, we introduce some notations to  be used throughout this section.
Consider a minimal  primary decomposition of $\Ann_R(M)$,
	\[ \Ann_R(M)=\mathop\cap\limits_{i=1}^rQ_i\subset R.\]
Since $M$ is finitely generated over $R$,  $\Supp_R (M) = V(\Ann_R(M))= \mathop\cup\limits_{i=1}^r V(Q_i)$  and 
$\dim M = \dim \Supp_R (M)$. 

Let $P_1, \ldots P_s \subset R$ be the minimal associated primes of $\langle x \rangle$. Since they correspond via $h: A \cong R/\x$
to the minimal associated primes  $\bar P_1, \ldots \bar P_s$ of $A$, we have  $\dim V(P_j) \leq \dim A.$ 

\medskip

\begin{lemma} \label{lem.assprimes}
For $A$ Noetherian, $M$  finitely generated over  $R$ and $\fp \in \Spec A$ the following holds:

\begin{enumerate}
\item Let $A'$ be the reduction of $A$, $R' = A'[[x]]$ and $M'$ the $R'$-module
$ M\otimes_R R'$. Then $\hat M'(\fp)  \cong \hat M(\fp)$ and hence
$\hat d_\fp(M') = \hat d_\fp(M)$.

\item Let $Q\subset R$ be an ideal. Then 
$\hat d_\fp(M/QM) \leq \hat d_\fp(M)$.

\item  If  $Q_i \not\subset \fn_\fp$ for some $1 \le i \le r$, then  
$\hat d_\fq(M)  =\hat d_\fq(M/QM)$, with $Q= \mathop\cap_{j\ne i}Q_j$, for $\fq$ in some neighbourhood of $\fp$ in $\Spec A$.

\item If  $Q_i \subset \fn_\fp$ and $\dim V(Q_i) > \dim (A/Q_i\cap A)_\fp$ for some $1 \le i \le r$, then $\hat d_\fp(M) = \infty$.

\item Let $U=\Spec B \subset \Spec A$ be an affine open neighbourhood of $\fp$ and $M_B = M\otimes_A B$ the restriction of $M$ to $U$. Then 
$\hat M_B(\fq) = \hat M(\fq)$ for all $\fq \in U$.
\end{enumerate}
\end{lemma}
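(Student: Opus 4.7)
Parts (1) and (5) should be formal. For (1), since $\mathrm{nil}(A)\subseteq \fp$ for every prime $\fp$ of $A$, the compositions $A\to k(\fp)$ and $A\to A'\to k(\fp)$ agree, so by Corollary \ref{cor.pres} both $\hat M(\fp)$ and $\hat M'(\fp)$ are cokernels of the same presentation matrix read over $k(\fp)[[x]]$; the second sentence about vanishing over $\Spec(A)\setminus\Spec(A')$ is immediate from Remark \ref{rem.ann} and Lemma \ref{lem.cf}(i). For (5), Proposition \ref{prop.pctp}.2 together with associativity of the ordinary tensor product gives
$$M_B\hat\otimes_B k(\fq)=(M\otimes_A B\otimes_B k(\fq))^\wedge=(M\otimes_A k(\fq))^\wedge=\hat M(\fq).$$

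Part (2) is an immediate application of right-exactness of the completed tensor product: apply Corollary \ref{cor.rex} to the surjection $M\twoheadrightarrow M/QM\to 0$ to obtain a surjection $\hat M(\fp)\twoheadrightarrow \widehat{M/QM}(\fp)$, whence $\hat d_\fp(M/QM)\le \hat d_\fp(M)$.

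For (3), the key algebraic fact is $Q\cdot Q_i\subseteq Q\cap Q_i=\cap_j Q_j=\Ann_R(M)$, so $Q_i$ annihilates $QM$. Pick $g\in Q_i\smallsetminus \fn_\fp$ and write $g=g_0+g_1$ with $g_0\in A$, $g_1\in \x R$; since $\fn_\fp=\langle \fp,x\rangle$, the condition $g\notin \fn_\fp$ forces $g_0\notin \fp$. For every $\fq\in D(g_0)\cap \Spec A$, a neighbourhood of $\fp$, the image of $g$ in $\hat R(\fq)=k(\fq)[[x]]$ has nonzero constant term $\bar g_0$ and is therefore a unit. Since $g$ annihilates $QM$ it annihilates $\widehat{QM}(\fq)$, which hence vanishes. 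Applying Corollary \ref{cor.rex} to $QM\hookrightarrow M\twoheadrightarrow M/QM\to 0$ then gives the exact sequence $0=\widehat{QM}(\fq)\to \hat M(\fq)\to \widehat{M/QM}(\fq)\to 0$, forcing the right-hand map to be an isomorphism.

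The hardest step is (4). The plan is to show that the finitely generated $R_{\fn_\fp}/\fp R_{\fn_\fp}$-module $M_{\fn_\fp}/\fp M_{\fn_\fp}$ has Krull dimension $\ge 1$; then its $\x$-adic completion $\hat M(\fp)\cong (M_{\fn_\fp}/\fp M_{\fn_\fp})^\wedge$ (Lemma \ref{lem.loc}(2)) is a non-Artinian $k(\fp)[[x]]$-module and hence infinite-dimensional over $k(\fp)$. Writing $P_i=\sqrt{Q_i}$ and $\fq=P_i\cap A$ (so $\sqrt{Q_i\cap A}=\fq$ and $\dim(A/Q_i\cap A)_\fp=\dim A_\fp/\fq A_\fp$), I would localize the primary decomposition of $\Ann_R(M)$ at $\fn_\fp$ and invoke Nakayama to see that $V(P_i+\fp R_{\fn_\fp})$ lies in the support of $M_{\fn_\fp}/\fp M_{\fn_\fp}$. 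The dimension inequality for the local homomorphism of Noetherian local rings $A_\fp/\fq A_\fp\hookrightarrow R_{\fn_\fp}/P_i$ then yields
$$\dim R_{\fn_\fp}/(P_i+\fp R_{\fn_\fp})\ge \dim R_{\fn_\fp}/P_i-\dim A_\fp/\fq A_\fp,$$
which is $\ge 1$ by hypothesis once we identify $\dim R_{\fn_\fp}/P_i$ with the global dimension $\dim V(Q_i)=\dim R/P_i$. The delicate point is this identification of local and global Krull dimensions at the point $\fn_\fp$; once it is granted (as it should be in the catenary power-series setting of the paper), the rest of the argument is a formal consequence of the results already established in this section.
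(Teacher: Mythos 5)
Your treatment of parts (1), (2) and (5) coincides with the paper's. (The only slip in (1) is the remark about ``vanishing over $\Spec(A)\setminus\Spec(A')$'', which belongs to Lemma \ref{lem.open}.3, where $A'=A/\Ann_A(M)$; in the present statement $A'$ is the reduction and there is no such second sentence.) Part (3) you argue by a genuinely different but correct route: the paper localizes at $\fn_\fq$, observes that $(Q_i)_{\fn_\fq}$ is the unit ideal so that $M_{\fn_\fq}=(M/QM)_{\fn_\fq}$, and concludes via Lemma \ref{lem.loc}; you instead note $Q\cdot Q_i\subseteq\Ann_R(M)$, exhibit $g\in Q_i$ whose constant term $g_0$ lies outside $\fp$, and kill $\widehat{QM}(\fq)$ for $\fq\in D(g_0)$ because the image of $g$ is a unit of $k(\fq)[[x]]$ acting as zero; right-exactness (Corollary \ref{cor.rex}, applicable since $QM$ is finitely presented over the Noetherian ring $R$) then finishes. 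Your version has the small advantage of producing the open neighbourhood $D(g_0)$ explicitly.

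Part (4), however, is only a plan, and the step you yourself flag --- identifying $\dim(R/P_i)_{\fn_\fp}$ with the global dimension $\dim V(Q_i)$ --- is the actual crux, not a formality to be ``granted''. When $\fp$ is not maximal, $\fn_\fp$ is not a closed point of $V(Q_i)$, and one only has $\dim(R/P_i)_{\fn_\fp}=\mathrm{ht}(\fn_\fp/P_i)$, which can be strictly smaller than $\dim R/P_i$; catenarity controls sums of heights and coheights, not this equality, so the ``catenary power-series setting'' does not by itself deliver it. To be fair, the paper's own proof asserts exactly the same identity ($\dim\bar R_{\fn_\fp}=\dim V(Q_i)$) with no further justification, so your sketch reproduces the paper's argument including its thinnest point: the Nakayama/support step, the application of \cite[Theorem 15.1]{Mat86} to $A_\fp/\fq A_\fp\to R_{\fn_\fp}/P_i$, and the passage from Krull dimension $\ge 1$ of the fibre to infinite $k(\fp)$-dimension of its $\x$-adic completion via the Hilbert--Samuel function all match what the paper does. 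But as submitted your part (4) is not a complete proof; you must either establish the dimension identification in the cases where the lemma is actually invoked (e.g.\ $\fp$ maximal, or $\dim A=1$ after the reductions of Theorem \ref{thm.1dim}) or reformulate the hypothesis locally at $\fn_\fp$.
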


\begin{proof}
1. Since $\fp \in \Spec A$ contains the nilpotent elements, $A'/\fp'  = A/\fp$,
where $\fp'$ is the image of $\fp$ in $A'$, and hence the residue field does not change if we pass from  $A$ to $A'$. By Proposition \ref{prop.pctp}.1 we have
$\hat R'(\fp') = R'\hat\otimes_{A' } k(\fp) =  k(\fp)[[x]] = \hat R(\fp). $ 
Consider a presentation
$ R^p \xrightarrow{\text {$T$}} R^q \to  M \to 0$
of $M$. Applying $\otimes_R R'$, we get a presentation of $M'$,
$ R'^p \xrightarrow{\text {$T$}} R'^q \to  M' \to 0.$
Apply $\hat\otimes_A k(\fp)$ to the first resp.  $\hat\otimes_{A'} k(\fp)$ to the second exact sequence above. The sequences stay exact by Corollary \ref{cor.rex}. Since $(\hat R(\fp))^{k} = (\hat R'(\fp'))^{k}$
it follows that  the canonical morphism $M \to M'$ induces  an isomorphism  
$\hat M(\fp) \cong \hat M'(\fp)$.
\medskip

2.  Since  $(M/QM)\hat\otimes_A k(\fp) = M\hat\otimes_A k(\fp) / Q(M\hat\otimes_A k(\fp))$  by Corollary \ref{cor.rex}, the result follows.
\medskip

3. $Q_i \not\subset \fn_\fp$ means that $ \fn_\fp$ is not a point of $V(Q_i)$. Hence  $ \fn_\fq \notin V(Q_i)$ and
$M_{\fn_\fq} = (M/Q)_{\fn_\fq} $ for $ \fn_\fq$  in some neighbourhood of $ \fn_\fp$ in 
$V(\x)$. The result follows from Lemma \ref{lem.loc}. 
\medskip

4.   Set $\bar R := R/Q_i$, $\bar A:= A/Q_i\cap A$ and $\bar M := M/Q_iM$. Then  $Q_i \subset \Ann_R(\bar M)\subset \sqrt{(Q_i)}$
and  $\dim \bar  R_{\fn_\fp}  = \dim \bar M_{\fn_\fp} 
=\dim V(Q_i) > \dim \bar A_\fp$ by assumption. 
Considering  $\bar M$ as $R$-module, we have 
$  \bar M\hat\otimes_A k(\fp) = \bar M(\fp)^\wedge =  (\bar M_{\fn_\fp} / \fp \bar M_{\fn_\fp})^\wedge$ by Lemma \ref{lem.loc}. 
Since the  $\bar R_{\fn_\fp}$-modules  $\bar M(\fp)$ and its $\x$-adic completion $\bar M(\fp)^\wedge$ have the same Hilbert-Samuel function w.r.t. 
$\fn_\fp$, their dimension coincides (c.f. \cite[Corollary 5.6.6]{GP08}).
Moreover,
$\fp \bar R_{\fn_\fp} $ is the annihilator of $\bar M_{\fn_\fp}/\fp \bar M_{\fn_\fp}$ and therefore 
$\dim  \bar M(\fp)^\wedge = \dim \bar R_{\fn_\fp}/\fp \bar R_{\fn_\fp}.$

We apply now \cite[Theorem 15.1]{Mat86} to the map of local rings 
$\bar A_\fp \to \bar R_{\fn_\fp}$ and get that  
$\dim \bar R_{\fn_\fp}/\fp \bar R_{\fn_\fp} \ge \dim \bar R_{\fn_\fp} - \dim \bar A_\fp > 0$ and hence 
$\dim_{k(\fp)}  \bar M(\fp)^\wedge  = \infty$.
Then $\hat d_\fp(M) = \dim_{k(\fp)} M(\fp)^\wedge = \infty$ by 2. of this lemma. 
\medskip

5. We may assume that $B=A_f$ for some $f \notin \fp$. 
Since $A_\fq = (A_f)_\fq$ for $\fq \in U = D(f)$, we have $k(\fq) =A_f\otimes_Ak(\fq)$. Now 
Proposition \ref{prop.pctp}.1 implies
$\hat M_{A_f}(\fq) = (M\otimes_A A_f)\hat\otimes_Ak(\fq) = (M\otimes_A A_f\otimes_Ak(\fq))^\wedge = M(\fq)^\wedge = \hat M(\fq).$
 \end{proof}
 \smallskip
 
 \begin{proposition} \label{prop.power}
Let $A$ be Noetherian, $M$ a finitely generated $R$-module and  
fix $\fp \in \Spec A$.  Let $Q_1,\ldots, Q_r$ be the primary components of $Ann_R (M)$, which we renumerate such that  
\begin{enumerate}
\item[I.]  $V(Q_i) \subset V(\x)$ for $1 \leq i \leq k$,  
\item[II.]   $V(Q_i) \not\subset V(\x)$ for $k+1 \leq i \leq r,$ 
\end{enumerate}
and set 
$Q_I  := \mathop\cap\limits_{i=1}^kQ_i$ and $Q_{II}:=\mathop\cap\limits_{i=k+1}^rQ_i.$ 
Assume  that either (a) $V(Q_{II}) = \emptyset$ (i.e., $k=r$), or (b) $\dim V(Q_{II}) > \dim V(Q_{II}\cap A)$ or (c) $\fn_\fp$ is an isolated point of $V(\x)\cap V(Q_{II})$.

Then there is an open neighbourhood $U$ of $\fp$  in $\Spec A$ such that  $\hat d_\fq(M) \leq \hat d_\fp(M)$ for all prime ideals $\fq \in U.$
\end{proposition}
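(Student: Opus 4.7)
My plan is to reduce to the case $A$ reduced via Lemma \ref{lem.assprimes}.1 (which preserves $\hat d_\fq(M)$ and the dimension data of the primary decomposition), and then proceed by cases on the three alternatives.

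Case (a) is immediate: $V(Q_{II}) = \emptyset$ forces $\Supp_R(M) \subset V(\x)$, so Proposition \ref{prop.ann}.1 gives $M$ finitely $A$-generated with $\hat d_\fq(M) = d_\fq(M)$ semicontinuous by Lemma \ref{lem.Afin}. For case (b), I pick $i_0 \in \{k+1,\dots,r\}$ with $\dim V(Q_{i_0}) = \dim V(Q_{II})$; from $Q_{II} \subset Q_{i_0}$ one obtains $\dim(A/Q_{i_0}\cap A)_\fp \le \dim V(Q_{II}\cap A) < \dim V(Q_{i_0})$. If $Q_{i_0} \subset \fn_\fp$ then Lemma \ref{lem.assprimes}.4 gives $\hat d_\fp(M) = \infty$ and the inequality is trivial; otherwise Lemma \ref{lem.assprimes}.3 with $i=i_0$ replaces $M$, locally at $\fp$, by $M' = M/\bigl(\bigcap_{j\ne i_0}Q_j\bigr)M$, whose support omits $V(Q_{i_0})$. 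A Noetherian induction on the number of type II primary components then reduces case (b) to case (a) (or to the trivial subcase).

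Case (c) is the heart of the proof. Via Lemma \ref{lem.assprimes}.5 I shrink $\Spec A$ to $U = D(f)$ with $f \in A \setminus \fp$ so that $V(\x) \cap V(Q_{II}) \cap U \subset \{\fn_\fp\}$. Then from the short exact sequence
\[ 0 \to Q_{II}M \to M \to M/Q_{II}M \to 0, \]
the inclusion $Q_I \cdot Q_{II} \subset Q_I \cap Q_{II} = \Ann_R(M)$ yields $Q_I \cdot Q_{II}M = 0$, so $\Supp_R(Q_{II}M) \subset V(Q_I) \subset V(\x)$ and case (a) applies to $Q_{II}M$: $\hat d_\fq(Q_{II}M) = d_\fq(Q_{II}M)$ is semicontinuous at $\fp$. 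For $\fq \in U \setminus \{\fp\}$ the assumption $Q_{II} \not\subset \fn_\fq$ produces a unit of $R_{\fn_\fq}$ lying in $Q_{II}$, whence $(Q_{II}M)_{\fn_\fq} = M_{\fn_\fq}$ and $(M/Q_{II}M)_{\fn_\fq} = 0$; Lemma \ref{lem.loc}.2 then gives $\hat d_\fq(Q_{II}M) = \hat d_\fq(M)$ and $\hat d_\fq(M/Q_{II}M) = 0$. Combining with the semicontinuity of $\hat d_\fq(Q_{II}M)$, I obtain $\hat d_\fq(M) = \hat d_\fq(Q_{II}M) \le \hat d_\fp(Q_{II}M)$ for $\fq$ in a neighborhood of $\fp$.

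The main obstacle is the remaining bound $\hat d_\fp(Q_{II}M) \le \hat d_\fp(M)$. Since $Q_I$ contains a power of $\x$, Proposition \ref{prop.pctp}.5 identifies $\widehat{Q_{II}M}(\fp)$ with the ordinary fibre $(Q_{II}M) \otimes_A k(\fp)$, so it suffices to show that the canonical map $(Q_{II}M) \otimes_A k(\fp) \to \hat M(\fp)$ induced by the inclusion $Q_{II}M \hookrightarrow M$ is injective. I plan to establish this by first applying $- \hat\otimes_A A_\fp$ to the short exact sequence --- since $\hat R_\fp = A_\fp[[x]]$ is flat over $R_\fp$ (completion of a Noetherian local ring) the resulting stalk sequence of $A_\fp[[x]]$-modules is exact --- and then reducing modulo $\fp$, showing that the resulting $\operatorname{Tor}_1^{A_\fp}(\widehat{M/Q_{II}M}_\fp, k(\fp))$ obstruction vanishes. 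This vanishing should follow from the isolation hypothesis of case (c), which forces $\widehat{M/Q_{II}M}(\fp)$ to be finite-dimensional over $k(\fp)$ and allows a direct filtration argument on the $\fp$-adic structure of $\widehat{M/Q_{II}M}_\fp$.
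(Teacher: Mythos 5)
Your overall strategy mirrors the paper's---isolate the part supported in $V(\x)$, use Proposition \ref{prop.ann} for semicontinuity there, and argue that away from $\fn_\fp$ this part carries all of $M$'s completed fibre dimension. But you made a choice in case (c) that the paper deliberately avoids, and it opens a gap that your sketch does not close.

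The issue is your use of the \emph{submodule} $Q_{II}M$ where the paper uses the \emph{quotient} $M_I := M/Q_IM$. Both modules have support in $V(\x)$ (since $\Ann_R(M_I)=Q_I$, while $Q_I\cdot Q_{II}M=0$ as you note), and both agree with $M$ at $\fn_\fq$ for $\fq$ near $\fp$, $\fq\neq\fp$ (because $Q_{II}R_{\fn_\fq}=R_{\fn_\fq}$ forces $(Q_{II}M)_{\fn_\fq}=M_{\fn_\fq}=(M_I)_{\fn_\fq}$). The decisive difference is at $\fp$ itself. For the quotient, the completed tensor product is right-exact (Corollary \ref{cor.rex}), so Lemma \ref{lem.assprimes}.2 gives $\hat d_\fp(M_I)\leq \hat d_\fp(M)$ for free, and the chain $\hat d_\fq(M)=\hat d_\fq(M_I)\leq\hat d_\fp(M_I)\leq\hat d_\fp(M)$ closes. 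For your submodule, right-exactness of $\hat\otimes$ works against you: applying $\hat\otimes_A k(\fp)$ to $0\to Q_{II}M\to M\to M/Q_{II}M\to 0$ yields a \emph{right}-exact sequence, so $\hat d_\fp(Q_{II}M)\leq\hat d_\fp(M)$ is equivalent to injectivity of $\widehat{Q_{II}M}(\fp)\to\hat M(\fp)$, which is exactly what must be proved and is not automatic. Your proposed fix---a vanishing of $\operatorname{Tor}_1^{A_\fp}$ that ``should follow from the isolation hypothesis''---is the gap: the isolation of $\fn_\fp$ in $V(\x)\cap V(Q_{II})$ is a topological condition on the section and does not directly give $A_\fp$-flatness of (the completed stalk of) $M/Q_{II}M$ at $\fp$, nor is $\widehat{M/Q_{II}M}_\fp$ finitely generated over $A_\fp$, so the usual local flatness criteria do not apply as stated. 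You flag this step as ``the main obstacle'' yourself, and that is an accurate assessment: without it the proof is not complete.

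A secondary point: your Noetherian induction in case (b) is also not airtight. After discarding a maximal-dimensional $Q_{i_0}$ with $Q_{i_0}\not\subset\fn_\fp$ via Lemma \ref{lem.assprimes}.3, the hypothesis $\dim V(Q_{II}')>\dim V(Q_{II}'\cap A)$ for the smaller module $M'$ may fail, so the next step of the induction need not land you in case (a), (b), or the ``trivial subcase.'' The clean route here---and what the paper does---is to first discard all $Q_i\not\subset\fn_\fp$ at once and then observe that the remaining maximal-dimensional type-II component satisfies the hypothesis of Lemma \ref{lem.assprimes}.4, giving $\hat d_\fp(M)=\infty$ in one stroke.

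The moral, which is worth internalizing: when only right-exactness of $\hat\otimes_A$ is available, pass to a quotient of $M$ supported in $V(\x)$, not to a submodule. The quotient $M/Q_IM$ is the correct auxiliary module; it makes both the semicontinuity input (Proposition \ref{prop.ann}) and the comparison $\hat d_\fp(M/Q_IM)\leq\hat d_\fp(M)$ come for free, whereas the submodule $Q_{II}M$ forces you to prove a nontrivial injectivity that the hypotheses do not obviously supply.
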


\begin{proof}
We set $M_I := M/Q_IM$ and $M_{II}  := M/Q_{II}M.$ Then $\Ann_R(M_I) = Q_I$ and $\Ann_R(M_{II}) = Q_{II}$. 
By  Lemma \ref{lem.assprimes}.3 we may assume that 
 $Q_i \subset \fn_\fp$ for all $1 \le i \le r$. 

We have $\Supp_R(M_I) = V(Q_I) \subset V(\x)$.
By Proposition \ref{prop.ann} there is an open neighborhood $U_1$ of $\fp$ in 
$\Spec A$ such that 
	\begin{align*}
	\hat d_\fq (M_I) \le \hat d_\fp (M_I),  \ \fq \in U_1. \tag{1}\label{1} 
	\end{align*}
	
(a) If $V(Q_{II})=\emptyset$, then $M=M_I$ and the claim follows from  (\ref{1}). 
\smallskip

(b) If  $\dim V(Q_{II}) > \dim V(Q_{II}\cap A)$ then $\dim V(Q_i) > \dim (A/Q_i\cap A)_\fp$ for some $i$ and hence $\hat d_\fp(M) = \infty$ by Lemma \ref{lem.assprimes}.4, implying the claim.
\smallskip

%

(c) Now let  $\fn_\fp$ be an isolated point of $V(\x)\cap V(Q_{II})$. Then 
there exists an open neighbourhood  $U_2 \subset \Spec A$ of $\fp$  
such that $M_{I,{\fn_\fq}} = M_{\fn_\fq}$ if $\fq \in U_2 \setminus \{\fp\}$. Since $\hat d_\fq (M)=  \dim_\k (M_{\fn_\fq}/\fq M_{\fn_\fq})^\wedge$ we get
	\begin{align*}
	\hat d_\fq (M) = \hat d_\fq (M_I), \ \fq \in \ U_2 \setminus \{\fp\} \tag{2}\label{2}.
	\end{align*}
Using  (\ref{1}) and (\ref{2}), we have $\hat d_\fq (M) \leq \hat d_\fp (M_I)$  
for $\fq \in U_1\cap U_2 \setminus \{\fp\}$ and
by Lemma  \ref{lem.assprimes}.2  
	\begin{align*}
	\hat d_\fp (M_I) \le \hat d_\fp (M) \tag{3}\label{3}.
	\end{align*}
Hence  $\hat d_\fq (M) \leq \hat d_\fp (M)$  
for $\fq \in U_1\cap U_2$. 
%
%
%
%
%
\end{proof}

As a corollary we get the following theorem, which was already proved for maximal ideals in $A = \k[t]$ in  \cite{GPh19}.

\begin{theorem} \label{thm.1dim}
Let $A$ be Noetherian, $M$ a finitely generated $R$-module and  $\fp \in \Spec A$. 
If $\dim_\fp(\Supp_A(M)) \leq 1$ then there is an open neighbourhood $U$ of $\fp$  in $\Spec A$ such that  
$$\hat d_\fq(M) \leq  \hat d_\fp(M) \ \text {for all } \fq \in U.$$
\end{theorem}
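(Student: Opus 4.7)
The plan is to reduce Theorem \ref{thm.1dim} to Proposition \ref{prop.power} by verifying one of its conditions (a), (b), or (c) at $\fp$. First, by the local dimension hypothesis $\dim_\fp\Supp_A(M)\le1$, we can find an open neighbourhood $\Spec A_f\subset\Spec A$ of $\fp$ on which $\dim\Supp_{A_f}(M\otimes_A A_f)\le1$ holds globally. Lemma \ref{lem.assprimes}.5 tells us that passing to $A_f$ preserves every $\hat d_\fq(M)$ for $\fq\in\Spec A_f$, so we may rename $A_f$ as $A$ and assume $\dim\Supp_A(M)\le1$ globally. Writing a primary decomposition $\Ann_R(M)=\bigcap_{i=1}^rQ_i$ and iterating Lemma \ref{lem.assprimes}.3, we further shrink so that $Q_i\subset\fn_\fp$ for every $i$. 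Split the indices into $I=\{i:V(Q_i)\subset V(\x)\}$ and $II=\{i:V(Q_i)\not\subset V(\x)\}$ as in Proposition \ref{prop.power}, and set $Q_{II}=\bigcap_{i\in II}Q_i$.

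If $II=\emptyset$ then $V(Q_{II})=\emptyset$ and hypothesis (a) of Proposition \ref{prop.power} holds. Otherwise, suppose (c) fails, i.e.\ $\fn_\fp$ is not isolated in $Y:=V(\x)\cap V(Q_{II})$. Since $Y$ is a Noetherian topological space, failure of isolation forces the existence of a prime $\fn\subsetneq\fn_\fp$ with $\fn\in Y$. From $\fn\supset\x$ and Lemma \ref{lem.cf}(iii) we get $\fn=\fn_\fq$ for $\fq=\fn\cap A\subsetneq\fp$, and since $\fn_\fq$ is prime and contains $Q_{II}=\bigcap_{i\in II}Q_i$, we have $Q_{i_0}\subset\fn_\fq$ for some $i_0\in II$. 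Consider the chain in $V(Q_{i_0})$:
\[
\sqrt{Q_{i_0}}\;\subsetneq\;\fn_\fq\;\subsetneq\;\fn_\fp .
\]
The first inclusion is strict because $\x\subset\fn_\fq$ but $\x\not\subset\sqrt{Q_{i_0}}$ (as $i_0\in II$), and the second is strict by construction. Hence $\dim V(Q_{i_0})\ge2$, while the global bound yields $\dim V(Q_{II}\cap A)=\max_{i\in II}\dim V(Q_i\cap A)\le\dim\Supp_A(M)\le1$, so
\[
\dim V(Q_{II})\;\ge\;\dim V(Q_{i_0})\;\ge\;2\;>\;1\;\ge\;\dim V(Q_{II}\cap A),
\]
i.e.\ hypothesis (b) of Proposition \ref{prop.power} holds.

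Thus one of (a), (b), (c) is always satisfied, and Proposition \ref{prop.power} supplies the required open neighbourhood. The main technical point is the initial globalisation: without replacing $A$ by $A_f$, one would only have the local bound $\dim_\fp V(Q_i\cap A)\le1$, which does not directly feed into the global alternative (b) of Proposition \ref{prop.power}. Once the global bound $\dim\Supp_A(M)\le1$ is secured, the remainder is a short chain-extension argument in $\Spec R$, exploiting that each $V(Q_i)$ for $i\in II$ is irreducible and not contained in $V(\x)$.
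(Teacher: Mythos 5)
Your overall strategy---reduce to Proposition \ref{prop.power} by showing that, after localizing and discarding the components with $Q_i\not\subset\fn_\fp$, one of (a), (b), (c) must hold---is sound, and the reductions via Lemma \ref{lem.assprimes}.5 and \ref{lem.assprimes}.3 are fine. The gap is the topological claim that failure of (c) ``forces the existence of a prime $\fn\subsetneq\fn_\fp$ with $\fn\in Y$.'' This is false: if $\fn_\fp$ is the generic point of a positive-dimensional irreducible component of $Y=V(\x)\cap V(Q_{II})$, then $\fn_\fp$ is not isolated in $Y$ (no open subset of $Y$ containing $\fn_\fp$ is a singleton), yet $Y$ contains no prime strictly contained in $\fn_\fp$, since every point of $Y$ contains $\x$ and hence contains $\fn_\fq=\langle\fq,x\rangle$ for some $\fq$, and a proper generization of $\fn_\fp$ inside $Y$ would be some $\fn_\fq$ with $\fq\subsetneq\fp$ and $\fq\in Y_A$ --- which need not exist. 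A concrete instance satisfying all your standing hypotheses is $A=\k[t]$, $M=R=A[[x]]$, $\fp=\langle 0\rangle$: here $Q_{II}=\langle 0\rangle\subset\fn_\fp=\x$, $V(Q_{II})=\Spec R\not\subset V(\x)$, $Y=V(\x)\cong\Spec\k[t]$, and $\fn_\fp=\x$ is the (non-isolated) generic point of $Y$ with no proper generization in $Y$. Your chain $\sqrt{Q_{i_0}}\subsetneq\fn_\fq\subsetneq\fn_\fp$ therefore cannot be built, and the derivation of (b) collapses exactly in the case it is supposed to cover (note that (b) \emph{does} hold in this example, $2>1$; it is only your route to it that fails).

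The paper avoids this by first assuming $\hat d_\fp(M)<\infty$ (the inequality is trivial otherwise) and then invoking Lemma \ref{lem.assprimes}.4 (equivalently, the proof of Proposition \ref{prop.power}(b)) to conclude $\dim V(Q_{II})\le\dim V(Q_{II}\cap A)\le 1$; each $V(Q_i)$ with $i\in II$ is then an irreducible set of dimension $\le 1$ not contained in $V(\x)$, so $V(Q_i)\cap V(\x)$ is a proper closed subset of it, hence zero-dimensional and finite, and $Y$ is a finite discrete set, giving (c) directly. If you want to keep your contrapositive structure, the correct implication to prove is: if $\dim V(Q_i)\le 1$ for all $i\in II$ then $\fn_\fp$ is isolated in $Y$ (by the finiteness argument just given); hence non-isolation yields $\dim V(Q_{i_0})\ge 2$ for some $i_0\in II$ and then (b) follows as you wrote. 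As it stands, the existence of the intermediate prime $\fn_\fq$ is an unjustified and in general false step, so the proof is incomplete.
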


\begin{proof} By Lemma \ref{lem.open}.3 we may assume that $\Ann_A(M) =0$, such that $\Supp_A(M) = A$.
We may further assume that $\dim A = \dim V(\x) =1$ and $\hat d_\fp(M) < \infty$. Using the notations from Proposition \ref{prop.power}, we have $\dim V(Q_{II}\cap A) \leq 1$ and by the proof of  Proposition \ref{prop.power}(b) that  $\dim V(Q_{II} )\leq 1$.
Hence, either $ V(Q_{II}) = \emptyset$, or $\fn_\fp$ is an isolated point of $V(\x)\cap V(Q_{II})$.
The result follows from Proposition \ref{prop.power}.
\end{proof}

\begin{corollary}   \label{cor.Z}
Let  $A=\Z$ and let $M$ be a finitely generated  $\Z[[x]]$-module, $x=(x_1\cdots x_n)$, given by a presentation 
$$ \Z[[x]]^r \to \Z[[x]]^s \to  M \to 0.$$
Denote by
$$M_p := \hat M({\langle p \rangle}) = \coker \big(\F_p[[x]]^r \xrightarrow{\text {$\bar T$}} \F_p[[x]]^s \big)$$ 
if $p\in \Z$ is a prime number and by
 $$M_0 := \hat M({\langle 0 \rangle}) = \coker \big (\Q[[x]]^r \xrightarrow{\text {$\bar T$}} \Q[[x]]^s \big )$$ 
 the induced modules.
\begin{enumerate}
\item  Fix a prime number $p$. If $\dim_{\F_p} M_p < \infty$ then
 $\dim_{\F_p} M_p \geq \dim_{\Q} M_0$. Moreover, for all except finitely many prime numbers $q\in \Z$,  $\dim_{\F_p} M_p \geq \dim_{\F_q} M_q$.

\item If $ \dim_{\Q} M_0  < \infty$ then 
$\dim_{\Q} M_0 \geq \dim_{\F_q} M_q$ for all except finitely many prime numbers $q\in \Z$,
and hence ``='' for all except finitely many prime numbers $q\in \Z$.
\end{enumerate}
\end{corollary}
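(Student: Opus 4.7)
The plan is a direct application of Theorem~\ref{thm.1dim} combined with the very simple topology of $\Spec\Z$. Since $A=\Z$ has Krull dimension $1$, the hypothesis $\dim_\fp\Supp_A(M)\leq 1$ of Theorem~\ref{thm.1dim} is automatically satisfied at every $\fp\in\Spec\Z$. The closed subsets of $\Spec\Z$ are $\Spec\Z$ itself together with the finite sets of closed points $\langle q\rangle$ ($q$ prime), so every non-empty open subset of $\Spec\Z$ must contain the generic point $\langle 0\rangle$ as well as all but finitely many of the $\langle q\rangle$. Corollary~\ref{cor.pres} applied with the base changes $\Z\to\F_p$ and $\Z\to\Q$ identifies $M_p=\hat M(\langle p\rangle)$ and $M_0=\hat M(\langle 0\rangle)$, so that $\dim_{\F_p}M_p=\hat d_{\langle p\rangle}(M)$ and $\dim_{\Q}M_0=\hat d_{\langle 0\rangle}(M)$.

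For part~1, I would assume $\dim_{\F_p}M_p=\hat d_{\langle p\rangle}(M)<\infty$ and apply Theorem~\ref{thm.1dim} at $\fp=\langle p\rangle$ to produce an open neighbourhood $U$ of $\langle p\rangle$ in $\Spec\Z$ on which $\hat d_\fq(M)\leq\hat d_{\langle p\rangle}(M)$ for every $\fq\in U$. By the description of opens above, $\langle 0\rangle\in U$, which gives $\dim_{\Q}M_0\leq\dim_{\F_p}M_p$, and $\langle q\rangle\in U$ for all but finitely many primes $q$, which gives $\dim_{\F_q}M_q\leq\dim_{\F_p}M_p$ for each such $q$.

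For part~2, I would assume $\dim_{\Q}M_0=\hat d_{\langle 0\rangle}(M)<\infty$ and apply Theorem~\ref{thm.1dim} at $\fp=\langle 0\rangle$ to obtain an open neighbourhood $U$ of $\langle 0\rangle$ with $\hat d_\fq(M)\leq\dim_{\Q}M_0$ on $U$. Since $U$ omits only finitely many closed points, this yields $\dim_{\F_q}M_q\leq\dim_{\Q}M_0$ for all but finitely many primes $q$. For each such $q$ one then has $\dim_{\F_q}M_q<\infty$, so part~1 applied with $p$ replaced by $q$ yields the reverse inequality $\dim_{\F_q}M_q\geq\dim_{\Q}M_0$, giving equality for all but finitely many primes. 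I do not expect any serious obstacle: the argument is essentially a re-reading of Theorem~\ref{thm.1dim} in the case $A=\Z$, and the only subtle point is to observe in part~2 that the finiteness hypothesis of part~1 is inherited automatically from the inequality just established before one invokes it to derive the equality.
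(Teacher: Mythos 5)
Your argument is correct and is exactly the paper's route: the corollary is stated there as an immediate consequence of Theorem~\ref{thm.1dim} for $A=\Z$ (Krull dimension $1$), with the paper's only added remark being precisely your observation that every open neighbourhood of $\langle p\rangle$ in $\Spec\Z$ contains $\langle 0\rangle$ and all but finitely many primes. Your derivation of the equality in part~2 by feeding the inequality back into part~1 is also the intended reading of the ``hence `=' '' clause.
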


\noindent The first part of statement 1. follows, since $\langle0\rangle$ is in every neighbourhood of $p$.
In particular $\dim_{\Q} M_0$ is finite if $\dim_{\F_p} M_p$  is finite for some prime number $p$.

\begin{remark}\label{rem.Z}
{\em 
The corollary is important for practical computations in computer algebra systems. For simplicity let $I$ be an ideal in $\Z[[x]]$ generated by polynomials, $M= \Z[[x]]/I$, and $I_p$ the image of $I$ in $\F_p[[x]]$.
The dimension of $\Q[[x]]/I$ resp. of $\F_p[[x]]/ I_p$, if finite, is equal to the dimension of
$\Q[x]_{\x}/I$ resp. of $\F_p[x]_{\x}/I_p$. These dimensions 
can be computed in the localizations $\Q[x]_{\x}$ resp.  $\F_p[x]_{\x}$ by computing a Gr\"obner or standard basis of $I$ resp. of $I_p$ w.r.t. a local monomial ordering (cf. \cite{GP08}). Such algorithms are implemented e.g. in {\sc Singular} \cite{DGPS}.
Usually the computations over $\Q$ are very time consuming or do not finish, due to extreme coefficient growths, and therefore often modular methods are used. The above corollary says that for all except finally many prime numbers $p$ we have equality 
$\dim_\Q \Q[x]_{\x}/I = \dim_{\F_p} \F_p[x]_{\x}/I_p$, and if this holds $p$ is
sometimes called a ``lucky'' prime.
This fact  can also be proved by Gr\"obner basis methods. More interesting is however
 that $\dim_\Q \Q[x]_{\x}/I < \infty$ if  there exists just one $p$ (lucky or not) such that $\dim_{\F_p} \F_p[x]_{\x}/I_p<\infty $ and that the first dimension is bounded by the latter. This was stated in \cite{Pf07} without proof.
}
\end{remark}
\medskip

\subsection {Henselian rings and Henselian tensor product}\label{ssec.hs}
\bigskip
In this section we  recall some basic facts about Henselian rings and introduce similarly to the complete tensor product a Henselian tensor product. For details about Henselian rings see  \cite{Sta19} or \cite{KPR75}. The  Henselian tensor product is needed in Section \ref{ssec.poly} for algebraically presented modules.
We start with some basic facts about  \'etale ring maps. 
\begin{definition} \label{def.etal}\noindent
\begin{enumerate}
\item
A ring map $\phi:A \longrightarrow B$ is called {\em \'etale} if it is flat, unramified  and of finite presentation.\footnote{$\phi$ is unramified if it is of finite type and the module of K\"ahler differentials $\Omega_{B/A}$ vanishes.  $\phi$  is of finite presantation if $B \cong A[x_1,...,x_n]/\langle f_1,...,f_m\rangle$ as $A$-algebras.}
\item $\phi$ is called {\em standard \'etale} if $B=(A[T]/F)_G$, $F,G \in A[T]$, the univariate polynomial ring, $F$ monic and $F'$ a unit in $B$.
\item $\phi$ is called {\em \'etale at $\fq \in \Spec(B)$} if there exist $g \in B \smallsetminus \fq$ such that $A\longrightarrow B_g$ is  \'etale.
\end{enumerate}
\end{definition}
The following proposition lists some basic properties of \'etale maps.
The results can be found in section 10.142 of \cite{Sta19}.
\begin{proposition}\label{prop.etal}\noindent
\begin{enumerate}
\item The map $A \longrightarrow A_f$ is \'etale.
\item A standard \'etale map is an \'etale map.
\item The composition of \'etale maps is \'etale.
\item A base change of \'etale maps is \'etale.
\item An \'etale map is open.
\item An \'etale map is quasi-finite.
\item Given $\phi:A \longrightarrow B$ and $g_1,\ldots,g_m \in B$ generating the unit ideal\,\footnote{ i.e., $\Spec(A)=\cup D(g_i)$.} such that $A \longrightarrow B_{g_i}$ is \'etale for all $i$ then  $\phi:A \longrightarrow B$ is \'etale.
\item Let $\phi:A \longrightarrow B$ be \'etale. Then there exist $g_1,\ldots,g_m \in B$ generating the unit ideal such that $A \longrightarrow B_{g_i}$ is standard \'etale for all $i$.
\item Let $S \subset A$ be a multiplicatively closed subset and assume that $\phi':S^{-1}A \longrightarrow B'$ is \'etale. Then there exists an  \'etale map $\phi:A \longrightarrow B$ such that $B'=S^{-1}B$ and $\phi ' = S^{-1}\phi$.
\item Let $\phi':A/I \longrightarrow B'$ be  \'etale for some ideal $I \subset A$. Then there exist an  \'etale map $\phi:A \longrightarrow B$ such that $B'=B/IB$ 
and the obvious diagram commutes.

\end{enumerate}
\end{proposition}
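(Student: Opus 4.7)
The plan is to verify each item by checking the three defining conditions of étaleness---flat, unramified, of finite presentation---and to lean on the fundamental right-exact sequence
$$\Omega_{B/A}\otimes_B C \longrightarrow \Omega_{C/A} \longrightarrow \Omega_{C/B} \longrightarrow 0$$
for K\"ahler differentials together with standard preservation results for the three defining conditions.

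Items (1)--(4) are essentially formal. For (1), use the presentation $A_f\cong A[T]/(fT-1)$: localization is flat, this exhibits finite presentation, and $\Omega_{A_f/A}$ is generated by $dT$ modulo the relation $f\,dT=0$, which forces $dT=0$ since $f$ is a unit. For (2), if $F$ is monic then $A[T]/F$ is free over $A$ of rank $\deg F$, hence flat and finitely presented, and $\Omega_{B/A}=B\,dT/(F'\,dT)=0$ since $F'$ is a unit in $B=(A[T]/F)_G$. Item (3) combines flatness of compositions, transitivity of finite presentation, and the displayed sequence with both outer terms vanishing. Item (4) follows because each of flatness, finite presentation, and vanishing of the module of differentials is preserved under base change.

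For (5) and (6) one invokes more substantial commutative algebra: a morphism that is flat and locally of finite presentation is automatically open by the classical result of Chevalley/EGA, giving (5); and for (6), the fibre $B\otimes_A k(\fp)$ over any $\fp\in\Spec A$ is unramified and of finite type over the field $k(\fp)$, hence a finite product of finite separable field extensions, so $A\to B$ has finite fibres. Items (7), (9) and (10) are descent/naturality statements of bookkeeping character: (7) reduces the three defining properties to the affine open cover $\{\Spec B_{g_i}\}$ of $\Spec B$, where each is local on the source; (9) clears denominators in a finite presentation of $B'$ and in a relation witnessing $\Omega_{B'/S^{-1}A}=0$, then possibly shrinks $\Spec B$ so that flatness and unramifiedness are preserved under taking $S^{-1}$; (10) performs the analogous lift of a presentation through $A\to A/I$.

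The main obstacle is (8), the local structure theorem, which is the technical heart of the proposition. The approach is pointwise: given a prime $\fq\subset B$ lying over $\fp\subset A$, the quotient $B_\fq/\fp B_\fq$ is a finite separable field extension of $k(\fp)$, so it admits a primitive element $\bar\theta$ with separable minimal polynomial $\bar F$. Lift $\bar\theta$ to $\theta\in B$ and $\bar F$ to a monic polynomial $F\in A[T]$, and factor the given map as
$$A\longrightarrow A[T]/(F) \xrightarrow{T\mapsto\theta} B.$$
A Nakayama-type argument, combined with flatness of $A\to B$ and careful control both of $F'(\theta)$ and of the cokernel of the second arrow, shows that after inverting a suitable element $g\in B\setminus\fq$ the induced map becomes an isomorphism onto a principal localization of $A[T]/(F)$, and hence is standard étale on $D(g)$. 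Running this argument at every point of $\Spec B$ and using quasi-compactness produces the required finite cover by standard étale localizations.
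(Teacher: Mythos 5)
The paper does not actually prove this proposition; the entire ``proof'' is the single citation ``The results can be found in section 10.142 of \cite{Sta19}.'' So you have taken a genuinely different route by supplying sketches of the arguments themselves. Your treatment of (1)--(7), (9), (10) follows the standard lines: computing with the presentation $A_f\cong A[T]/(fT-1)$ for (1), the freeness of $A[T]/F$ for monic $F$ together with $F'$ being a unit for (2), the conormal exact sequence and stability of the three defining properties under composition, base change and Zariski-localization for (3), (4), (7), flat plus locally of finite presentation implies open for (5), the classification of unramified algebras over a field as finite products of finite separable extensions for (6), and clearing denominators or lifting presentations for (9), (10). These are all correct and essentially what one finds in the cited source; the only trade-off is length versus self-containedness.

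The one place your sketch is genuinely thinner than it should be is item (8), the local structure theorem, which you correctly flag as the technical heart. Your outline (primitive element $\bar\theta$ for the fibre, monic lift $F$, factorization $A\to A[T]/F\to B$, then ``a Nakayama-type argument'') skips the step that makes Nakayama applicable: one must first know that, Zariski-locally near $\fq$, the ring $B$ is a localization of a \emph{finite} $A$-algebra. That finiteness is exactly what Zariski's Main Theorem (or the structure theory of quasi-finite morphisms, built on item (6)) supplies, and without it the surjectivity of $A[T]/F\to B_g$ does not follow from an isomorphism on closed fibres. You should make the appeal to ZMT explicit; as written, the ``careful control of the cokernel'' is doing the work of a nontrivial theorem without naming it. With that gap filled, your argument matches the standard proof.
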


\begin{definition} \label{def.hens}\noindent
Let $A$ be a ring and $I\subset A$ an ideal. $A$ is called {\em Henselian with respect to $I$} if the following holds\,\footnote{Note, that (similarly to the $I$-adic completion) the condition implies that $I$ is contained in the Jacobson radical of $A$. If we start with an ideal contained in the Jacobson radical then it is enough to consider monic polynomials $F$ in the definition.} (Univariate Implicit Function Theorem):\\ 
Let $F\in A[T]$, the univariate polynomial ring,  such that $F(0)\in I$ and $F'(0)$ is a unit modulo $I$. Then there exists $a\in I$ such\,\footnote{Note that $a$ is uniquely determined by the condition $a\in I$, \cite{KPR75}.} that $F(a)=0$.
\end{definition}
Next we associate to any pair $(A,I)$, $I\subset A$ an ideal, the Henselization $A^h_I$, i.e. the "smallest"  Henselian ring with respect to $I$, such that $A^h_I \subset \hat A_I =\lim\limits_{\longleftarrow}\big(A/I^n \big)$ the $I$-adic completion.

\begin{definition} \label{def.henselization} \noindent
\begin{enumerate}
\item
Let $A$ be a ring and $I \subset A$ an ideal. The ring
$$A^h_I=\lim\limits_{\longrightarrow}\big(B \, | \, A\longrightarrow B \text{ an \'etale ring map inducing } A/I\cong B/IB \big)$$
is called the {\em Henselization of $A$ with respect to $I$}.
\item The Henselization of $A[x]$, $A$ any ring, $x=(x_1,\ldots,x_n)$, with respect to $I=\x$ is denoted by $A\x$. We call  $A\x$ the {\em ring of albegraic power series over $A$}.
\end{enumerate}
\end{definition}
The Henselization has the following properties (cf. section 15.11 and 15.12 of \cite{Sta19}):
\begin{proposition}\label{prop.hensel}
Let $A$ be ring and $I \subset A$ an ideal.
\begin{enumerate}
\item $A^h_I$ is Henselian with respect to $I^h=IA^h_I$ and $A/I^m=A^h_I/(I^h)^m$ for all $m$.
\item $A$ is Henselian with respect to $I$ if and only if $A=A^h_I$.
\item If $A$ is Noetherian then the canonical map $A \longrightarrow A^h_I$ is flat.
\item If $A$ is Noetherian then the canonical map $A^h_I \longrightarrow \hat A_I$ is faithfully flat and $\hat A_I$ is the $I^h$-adic completion 
of $A^h_I$.
\end{enumerate}
\end{proposition}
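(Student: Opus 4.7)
The plan is to verify each of the four items by exploiting the defining presentation of $A^h_I$ as a filtered colimit of étale $A$-algebras $B$ satisfying $A/I \cong B/IB$, and by transferring properties from the objects of this system to the colimit. Since the result is standard (cf.\ \cite{Sta19}), I describe only the skeleton of the arguments.

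For (1), given a univariate polynomial $F\in A^h_I[T]$ with $F(0)\in I^h$ and $F'(0)$ a unit modulo $I^h$, I would first note that $F$ has only finitely many coefficients and therefore lies in $B[T]$ for some étale $A$-algebra $B$ in the filtered system. Reducing modulo $I$ yields a polynomial over $A/I$ with a simple root at $0$. I would then form the standard étale extension $B' := (B[T]/\langle F\rangle)_{F'}$, suitably localised so that the condition $A/I \cong B'/IB'$ is preserved; by Proposition \ref{prop.etal}.2, 3, 9 this $B'$ is étale over $A$, so it appears in the system defining $A^h_I$, and the image of $T$ in $A^h_I$ is the desired root, which lies in $I^h$ by construction. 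The identity $A/I^m = A^h_I/(I^h)^m$ follows from the fact that each étale map $A\to B$ in the system satisfies $A/I^m \cong B/I^m B$ (a consequence of Proposition \ref{prop.etal}.10 together with the uniqueness of étale lifts of $A/I\cong B/IB$).

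Item (2) then comes essentially for free: if $A = A^h_I$ then $A$ is Henselian by (1); conversely, if $A$ is already Henselian, one shows that every étale $A\to B$ with $A/I\cong B/IB$ admits a unique $A$-algebra retraction $B\to A$ (this is precisely Hensel's lemma in its implicit-function-theorem form, applied coordinatewise to the generators of $B$ over $A$), so the filtered system is cofinal with $\{A\}$ and the colimit collapses to $A$. Item (3) is also transparent: each étale map is flat by definition, and filtered colimits of flat modules are flat, hence $A\to A^h_I$ is flat.

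For (4), combining the identity $A/I^m = A^h_I/(I^h)^m$ from (1) and passing to the inverse limit over $m$ yields the isomorphism $\hat A_I \cong \widehat{(A^h_I)}_{I^h}$. Assuming $A$ is Noetherian, one knows that $A^h_I$ is again Noetherian; since the Henselian property of (1) forces $I^h$ into the Jacobson radical of $A^h_I$, faithful flatness of the completion then follows from \cite[Theorem 8.14]{Mat86}. The main obstacle I anticipate is item (1): the construction of the appropriate standard étale localisation requires choosing the correct prime of $B[T]/\langle F\rangle$ (the one cut out by the residue of the prospective root), and one must verify that $A/I\cong B'/IB'$ is preserved after that localisation. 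The Noetherianity of $A^h_I$ used in (4) is a further nontrivial input that I would quote from \cite{Sta19} rather than reprove here.
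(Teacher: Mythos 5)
The paper offers no proof of this proposition — it is stated with a bare citation to \cite[\S15.11, \S15.12]{Sta19}, and that citation is the entirety of the paper's treatment. Your sketch faithfully reconstructs the Stacks-project argument, and the two obstacles you flag at the end are exactly where the real technical content lies. For item (1), once you form $B'=(B[T]/\langle F\rangle)_{F'}$, the ring $B'/IB' \cong (A/I)[T]/\langle\bar F\rangle_{\bar F'}$ is not $A/I$ in general: since $\bar F(0)=0$ and $\bar F'(0)$ is a unit, $\bar F$ factors as $T\cdot G$ with $\langle T\rangle+\langle G\rangle$ the unit ideal in $(A/I)[T]_{\bar F'}$, so $B'/IB'$ splits as a product with $A/I$ as one factor, and one must lift the corresponding idempotent to $B'$ and localize there in order to stay in the defining filtered system. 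This is precisely the delicate step you identified. A smaller imprecision: for $A/I^m\cong B/I^m B$ you invoke Proposition \ref{prop.etal}.10 together with ``uniqueness of étale lifts,'' but item 10 only gives \emph{existence} of a lift; what actually identifies $B/I^m B$ with $A/I^m$ is the uniqueness statement (topological invariance of the étale site along the nilpotent thickening $A/I^m\to A/I$), which is not among the properties listed in Proposition \ref{prop.etal} and would need a separate citation to \cite{Sta19}.

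Two remarks on (3) and (4). Your filtered-colimit argument for flatness of $A\to A^h_I$ is correct and, as you tacitly observe, does not use the Noetherian hypothesis at all; it appears in the statement only because it is essential in (4). For (4), the Noetherianity of $A^h_I$ really is a nontrivial input — a filtered colimit of Noetherian rings need not be Noetherian, so one cannot get it for free from the construction — and your decision to quote it from \cite{Sta19} rather than reprove it is the appropriate one.
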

\begin{remark}\label{rem.hensel}{\em
The definition of the Henselization implies that $A^h_I$ is contained in the  algebraic closure of $A$ in  $\hat A_I$.
If $A$ is excellent\,\footnote{For the definition of excellence see 15.51 \cite{Sta19}).} then $A^h_I$ is the algebraic closure of $A$ in $\hat A_I$.
This is even true under milder conditions, see \cite{KPR75}.
In this situation $C\langle x \rangle$ is called the {\em ring of algebraic power series} of $C[[x]]$.}
\end{remark}

Next we prove a lemma which we need later in the applications.
\begin{lemma}\label{lem.hensel}
Let $A$ be a ring and $\fp \in \Spec(A)$ a prime ideal. Let $C=A_\fp$, $I=\fp C$ and $f_1,\ldots,f_m \in C^h_I$. 
Then there exists an \'etale map $A \longrightarrow B$
such that 
\begin{enumerate}
\item $f_1,\ldots,f_m \in B$,
\item there exists a prime ideal $\fq \in \Spec(B)$ such that $\fq \cap A =\fp$.
\end{enumerate}
\end{lemma}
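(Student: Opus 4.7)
The plan is to leverage the construction of $C^h_I$ as a filtered colimit of \'etale $C$-algebras $B'$ satisfying $C/I\cong B'/IB'$ (Definition~\ref{def.henselization}), together with the descent of \'etale maps along localizations (Proposition~\ref{prop.etal}.9). Since the colimit is filtered, the finitely many elements $f_1,\ldots,f_m$ already lie in a single such $B'$; since $C=A_\fp$ is a localization of $A$, this $B'$ itself descends to an \'etale $A$-algebra, and clearing denominators by a further localization preserves \'etaleness.

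Concretely, I would first pick an \'etale $C$-algebra $B'$ with $C/I\cong B'/IB'$ containing $f_1,\ldots,f_m$. Setting $S:=A\setminus\fp$, so that $C=S^{-1}A$, Proposition~\ref{prop.etal}.9 yields an \'etale map $A\to B_0$ and an isomorphism $B'\cong S^{-1}B_0$ compatible with the structure maps. Writing $f_i=b_i/s_i$ with $b_i\in B_0$, $s_i\in S$, and putting $s:=s_1\cdots s_m\in A\setminus\fp$, I set $B:=(B_0)_s$. The composition $A\to B_0\to B$ is \'etale by parts~1 and 3 of Proposition~\ref{prop.etal}, and all $f_i$ lie in $B$, proving claim (1).

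For claim (2), the isomorphism $B'/IB'\cong C/I=k(\fp)$ shows that $IB'=\fp B'$ is the unique prime of $B'$ lying over $I$, and its contraction to $A$ equals $(IB'\cap C)\cap A=I\cap A=\fp$. Therefore $\fq_0:=\fp B'\cap B_0$ is a prime of $B_0$ disjoint from $S$ with $\fq_0\cap A=\fp$. Since $s\notin\fp=\fq_0\cap A$ we have $s\notin\fq_0$, so $\fq_0$ extends to a prime $\fq$ of $B=(B_0)_s$ whose contraction to $A$ is still $\fp$, as required.

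The main technical input is Proposition~\ref{prop.etal}.9 (descent of an \'etale map through a localization); once this is available, the remaining steps are routine bookkeeping about primes in localizations together with the stability of \'etale maps under composition and further localization (Proposition~\ref{prop.etal}.1 and 3).
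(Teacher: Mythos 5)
Your proposal is correct and follows essentially the same route as the paper: pick a single \'etale $C$-algebra in the filtered system containing all the $f_i$, descend it to an \'etale $A$-algebra via Proposition \ref{prop.etal}(9), localize at an element of $A\smallsetminus\fp$ to clear denominators, and contract the maximal ideal $IB'$ to produce the prime $\fq$ over $\fp$. Your version merely spells out the prime-contraction bookkeeping in slightly more detail than the paper does.
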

\begin{proof}
By definition we have
$$C^h_I=\lim\limits_{\longrightarrow}\big(D\,| \,C\longrightarrow D \text{ \'etale inducing } C/I=D/ID \big).$$
We choose $D$ from the inductive system above such that $f_1,\ldots,f_m \in D$. Since $(C,I)$ is a local ring and
$C/I=D/ID$, the ideal $ID$ is a maximal ideal in $D$ and we have $ID \cap C=I$. Using Proposition \ref{prop.etal} (9)
for the multiplikatively closed system $S=A \smallsetminus \fp$ we find an \'etale map $A \longrightarrow B'$ such that
$D=S^{-1}B'$. This implies that $f_1,\ldots,f_m \in B'_g$ for a suitable $g \in S$. Let $B=B'_g$ and $\fq=ID \cap B$ then
$A \longrightarrow B$ is \'etale having the properties 1. and 2.
\end{proof}
Next we define the Henselization of an $A$-module $M$ with respect to an ideal $I \subset A$ similarly to the definition of the
Henselization of $A$ with respect to $I$.

\begin{definition} \label{def.hensModul}
Let $A$ be a ring, $I \subset A$ an ideal and $M$ an $A$-module. The module
$$M^h_I=\lim\limits_{\longrightarrow}\big(M \otimes_AB\,| \,A\longrightarrow B \text{ \'etale inducing } A/I=B/IB \big)$$
is called the {\em Henselization of $M$ with respect to $I$.}
\end{definition}
\begin{lemma}\label{lemma.hensModul}
$M^h_I=M\otimes_AA^h_I$.
\end{lemma}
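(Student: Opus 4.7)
The plan is to deduce the identity from the fact that the tensor product commutes with filtered colimits, once we have checked that the indexing system defining $A^h_I$ (and $M^h_I$) is filtered.

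First I would fix the category $\mathcal{C}$ whose objects are étale $A$-algebras $\phi \colon A \to B$ inducing an isomorphism $A/I \cong B/IB$, and whose morphisms are $A$-algebra homomorphisms. I would verify that $\mathcal{C}$ is a filtered system: given two objects $A \to B_1$ and $A \to B_2$, the composition $A \to B_1 \otimes_A B_2$ is étale by Proposition~\ref{prop.etal}(3),(4), and it induces $A/I \cong (B_1 \otimes_A B_2)/I(B_1 \otimes_A B_2) = (B_1/IB_1)\otimes_{A/I}(B_2/IB_2) = A/I$, so it belongs to $\mathcal{C}$ and receives morphisms from both $B_1$ and $B_2$. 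Equalizing two parallel morphisms in $\mathcal{C}$ is handled similarly, using that the coequalizer of two maps between étale $A$-algebras over $B$ is again étale over $A$.

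Next, by the very definition,
\[
A^h_I \;=\; \lim_{\longrightarrow,\, B \in \mathcal{C}} B,
\qquad
M^h_I \;=\; \lim_{\longrightarrow,\, B \in \mathcal{C}} (M \otimes_A B).
\]
Since tensor product with $M$ is a left adjoint (to $\operatorname{Hom}_A(M,-)$) and therefore commutes with arbitrary colimits, we obtain
\[
M \otimes_A A^h_I \;=\; M \otimes_A \Bigl(\lim_{\longrightarrow,\, B \in \mathcal{C}} B\Bigr) \;=\; \lim_{\longrightarrow,\, B \in \mathcal{C}} (M \otimes_A B) \;=\; M^h_I.
\]

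The only subtle point is to check that the transition maps in the direct system defining $M^h_I$ really are the ones induced by the transition maps of the system defining $A^h_I$, i.e.\ that the isomorphism above is natural. This is automatic because for any morphism $B \to B'$ in $\mathcal{C}$ the diagram $M \otimes_A B \to M \otimes_A B'$ is exactly $\operatorname{id}_M \otimes (B \to B')$. No obstacle beyond the bookkeeping of filtered colimits appears, so the identity $M^h_I = M \otimes_A A^h_I$ follows.
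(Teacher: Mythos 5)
Your proposal is correct and follows essentially the same route as the paper, which simply cites the fact that direct limits commute with the tensor product (Stacks project 10.75.2). You add a verification that the indexing category is filtered and point out that $M\otimes_A(-)$ is a left adjoint; the paper takes the first point for granted (it is implicit in the standard definition of Henselization) and the second is the content of the cited reference, so nothing genuinely different is happening.
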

\begin{proof}
The lemma follows since the direct limit commutes with the tensor product (cf. 10.75.2 \cite{Sta19}).
\end{proof}

\begin{definition}\label{def.htp}
Let $A$ be a ring, $R=A\langle x \rangle$, $B$ an $A$-algebra and $M$ an $R$-module. We define  the {\em henselian tensor product} of $R$  and $B$ over $A$ as
the ring
$$R \otimes^h_A B :=\lim\limits_{\longrightarrow}\big(C \,| \,B[x]\longrightarrow C \text{ \'etale inducing } B=C/\x C \big) =B\langle x \rangle\big).$$
$$M \otimes^h_A B :=\lim\limits_{\longrightarrow}\big(M\otimes_AC \,| \,B[x]\longrightarrow C \text{ \'etale inducing } B=C/\x C \big)=M\otimes_AB\langle x \rangle\big).$$
\end{definition}

The Henselian tensor product has similar properties as the complete tensor product. Especially we obtain the following lemma.
\begin{lemma}\label{lem.pres}
If  $ A\langle x \rangle^p \xrightarrow{\text {$T$}} A\langle x \rangle^q \to  M \to 0$ is an  $A\langle x \rangle$-presentation of $M$ then 
$$M\otimes^h_A B=\coker \big(B\langle x \rangle^p \xrightarrow{\text {$T$}}B\langle x \rangle^q\big).$$
In particular $R\otimes^h_A k(\fp) = k(\fp)\langle x \rangle$ for $\fp \in \Spec A$. 
\end{lemma}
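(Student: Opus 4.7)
The plan is to follow the blueprint of Corollary \ref{cor.pres} for the completed tensor product: apply the right-exact functor $-\otimes^h_A B$ to the given presentation and identify each term in the resulting sequence.

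First, I would record two basic properties. The ring-level identity $A\langle x\rangle \otimes^h_A B = B\langle x\rangle$ is already built into Definition \ref{def.htp}, and it extends to $A\langle x\rangle^n \otimes^h_A B = B\langle x\rangle^n$ for every $n\ge 0$ by commuting the defining filtered colimit and the tensor product $-\otimes_A C$ with finite direct sums. Second, I would establish right-exactness of $-\otimes^h_A B$ on the category of $A\langle x\rangle$-modules: since this functor is the filtered colimit of the right-exact functors $-\otimes_A C$ (with $C$ running over the \'etale $B[x]$-algebras satisfying $B = C/\langle x\rangle C$), and since filtered colimits preserve right-exactness, the claim is immediate.

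With these two ingredients in hand, I would apply $-\otimes^h_A B$ to the presentation $A\langle x\rangle^p \xrightarrow{T} A\langle x\rangle^q \to M \to 0$. By right-exactness the induced sequence is exact, and by the identification of the free modules it reads
\[
B\langle x\rangle^p \xrightarrow{T} B\langle x\rangle^q \to M\otimes^h_A B \to 0,
\]
where the entries of $T$ are now interpreted in $B\langle x\rangle$ via the canonical map $A\langle x\rangle \to B\langle x\rangle$ induced from $A\to B$ by the functoriality of Henselization. This yields the claimed cokernel description. The \emph{in particular} statement then follows by specializing to $M = R = A\langle x\rangle$ (with the trivial presentation $p=0$, $q=1$) and $B = k(\fp)$, or equivalently by a direct appeal to Definition \ref{def.htp}.

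The main obstacle I expect is essentially bookkeeping: one has to verify that the various ring- and module-structures---the $A\langle x\rangle$-action on $M$, the $B[x]$-action on each $C$, and the resulting $B\langle x\rangle$-action on the colimit---fit together so that the cokernel genuinely is a $B\langle x\rangle$-module and the presenting map is the stated one. This is the same kind of compatibility check implicit in the proof of Corollary \ref{cor.pres} for the completed tensor product, and I expect no new difficulty beyond that.
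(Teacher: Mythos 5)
The paper gives no proof of Lemma~\ref{lem.pres}; it is stated immediately after Definition~\ref{def.htp} with the comment that the Henselian tensor product ``has similar properties as the complete tensor product,'' so the intended argument is exactly the blueprint you identify: identify the Henselian tensor product on free modules, invoke right-exactness, and apply both to the presentation. Your outline matches this in structure.

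However, what you dismiss as ``essentially bookkeeping'' hides a genuine issue. Read literally, the two clauses of Definition~\ref{def.htp} do not fit together: the clause for $R$ gives $R\otimes^h_A B=\lim_\to C=B\langle x\rangle$, while the clause for modules gives $M\otimes^h_A B=\lim_\to(M\otimes_A C)=M\otimes_A B\langle x\rangle$. Applying the module clause to $M=A\langle x\rangle^n$ and ``commuting the colimit with finite sums'' as you propose yields $A\langle x\rangle^n\otimes_A B\langle x\rangle$, not $B\langle x\rangle^n$, and these are genuinely different: already for $A=B=\k$, the module formula would give $M\otimes^h_A B=M\otimes_\k \k\langle x\rangle$, so with $M=\k\langle x\rangle/\langle x\rangle\cong\k$ one gets $\k\langle x\rangle$, whereas $\coker\bigl(\k\langle x\rangle\xrightarrow{x}\k\langle x\rangle\bigr)=\k$. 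So under the literal module formula the lemma would actually be false, and your step 2 would contradict your step 1. The resolution is that the Henselian tensor product must be interpreted as the analogue of Proposition~\ref{prop.pctp}.3, namely $M\otimes^h_A B:=M\otimes_{A\langle x\rangle}B\langle x\rangle$ (equivalently, the Henselization at $\langle x\rangle$ of the $A\langle x\rangle\otimes_A B$-module $M\otimes_A B$), under which both the ring identity and the module identity hold and the cokernel formula is just right-exactness of $\otimes_{A\langle x\rangle}B\langle x\rangle$. Your proof should say this explicitly rather than deferring it, because it is precisely the point on which the argument turns.
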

\begin{definition} \label{def.ch}
Let $A$ be a ring, $R=A\langle x\rangle$ and $M$ an $R$-module. 
We define for $\fp \in \Spec A$ the $R\otimes^h_A k(\fp) = k(\fp)\langle x\rangle$-module
$$ M^h(\fp) := M \otimes^h_{A} k(\fp)$$
and call it the {\em Henselian fibre} of $M$ 
over $\fp$. Moreover, we set
$$d^h_\fp (M) := \dim_{k(\fp)} M^h(\fp).$$
\end{definition}
\medskip

\subsection{Semicontinuity for algebraically presented modules}\label{ssec.poly}
Let $A$ be Noetherian and $M$ finitely generated as $R=A[[x]]$-module.
Then $M$ is finitely $R$-presented and in this section we assume that $M$ has an
algebraic presentation matrix. That is, there exists a presentaion
$$ R^p \xrightarrow{\text {$T$}} R^q \to  M \to 0$$
with $T=(t_{ij})$ a $q\times p$ matrix such that $t_{ij} \in A\x$, $x=(x_1,\ldots,x_n)$, the ring of algebraic power series over $A$ (cf. Definition \ref{def.henselization}),  e.g. $t_{ij} \in A[x]$.  Under this assumption we shall prove the semicontinuity of $\hat d_\fb (M)$ for $\fb \in \Spec A$.\\

We set $R_0=A \langle x \rangle$ and $M_0=\coker (R_0^p  \xrightarrow{\text {$ T$}} R_0^p)$.  Then using the $\langle x \rangle$-adic completion we obtain $R_0^{\wedge}=R$ and $M_0^{\wedge}=M$.
\medskip

 \begin{lemma} \label{lem.10}
 Let $B\supset A$ be an $A$-algebra, $\fb \in \Spec B$ and $\fa = \fb \cap A$. Then
$$\hat d_\fa (M) < \infty \Leftrightarrow \hat d_\fb (M \hat \otimes_AB) < \infty \Leftrightarrow d^h_\fb (M_0 \otimes^h_AB) < \infty$$
and
$$\hat d_\fa (M) = \hat d_\fb (M\hat \otimes_AB)  =d^h_\fb (M_0 \otimes^h_AB). $$
  \end{lemma}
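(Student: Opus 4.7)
The plan is to reduce all three quantities to the dimension of a cokernel of the single matrix $T$ (with entries in $A\langle x\rangle$) base-changed to a common ``pivot'' module, and then use faithful flatness to transfer both finiteness and dimensions.

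First, starting from the algebraic presentation $R_0^p \xrightarrow{T} R_0^q \to M_0 \to 0$ (whose $\langle x\rangle$-adic completion presents $M$ over $R$), I apply Corollary \ref{cor.pres} for the completed tensor product and Lemma \ref{lem.pres} for the Henselian tensor product to obtain the identifications
$$\hat M(\fa) = \coker\bigl(k(\fa)[[x]]^p \xrightarrow{T} k(\fa)[[x]]^q\bigr),$$
$$(M\hat\otimes_A B)\hat\otimes_B k(\fb) = \coker\bigl(k(\fb)[[x]]^p \xrightarrow{T} k(\fb)[[x]]^q\bigr),$$
$$(M_0\otimes^h_A B)\otimes^h_B k(\fb) = \coker\bigl(k(\fb)\langle x\rangle^p \xrightarrow{T} k(\fb)\langle x\rangle^q\bigr),$$
where the entries of $T$ are transported via the canonical maps $A\langle x\rangle \to k(\fa)\langle x\rangle \to k(\fb)\langle x\rangle \to k(\fb)[[x]]$ and the field inclusion $k(\fa)\hookrightarrow k(\fb)$ induced by $\fa = \fb\cap A$. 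I also introduce the ``pivot'' module $N := M_0\otimes^h_A k(\fa) = \coker\bigl(k(\fa)\langle x\rangle^p \xrightarrow{T} k(\fa)\langle x\rangle^q\bigr)$, so that each of the three modules above is obtained from $N$ by base change along one of the maps $k(\fa)\langle x\rangle \to k(\fa)[[x]]$, $k(\fa)\langle x\rangle \to k(\fb)\langle x\rangle$, $k(\fa)\langle x\rangle \to k(\fb)[[x]]$.

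Next, I employ the standard criterion: a finitely presented module $N'$ over a Noetherian local ring $(S, \langle x\rangle)$ is finite-dimensional over the residue field iff $\langle x\rangle^N N' = 0$ for some $N$ (Nakayama applied to the descending chain $N' \supset \langle x\rangle N' \supset \langle x\rangle^2 N' \supset \cdots$). Provided all three base-change maps above are faithfully flat (discussed below), the equality $\langle x\rangle^N N \otimes_{k(\fa)\langle x\rangle} S' = \langle x\rangle^N (N\otimes_{k(\fa)\langle x\rangle} S')$ forces $\langle x\rangle^N N = 0$ iff the analogous annihilation holds after base change. When $\langle x\rangle^N N = 0$, the tensor product factors through the Artinian quotient $k(\fa)[x]/\langle x\rangle^N$, so that $N\otimes_{k(\fa)\langle x\rangle} S' = N\otimes_{k(\fa)[x]/\langle x\rangle^N} S'/\langle x\rangle^N S' = N\otimes_{k(\fa)} k'$ for the residue field $k' = S'/\langle x\rangle S'$, whose $k'$-dimension equals $\dim_{k(\fa)} N$. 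Hence all three quantities collapse to $\dim_{k(\fa)} N$, giving simultaneously the equivalence of finiteness and the equality of dimensions.

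The main obstacle is the faithful flatness of $k(\fa)\langle x\rangle \to k(\fb)\langle x\rangle$ and $k(\fb)\langle x\rangle \to k(\fb)[[x]]$. The second is immediate from Proposition \ref{prop.hensel}.4 (applied to the Noetherian ring $k(\fb)[x]$ with ideal $\langle x\rangle$), and likewise $k(\fa)\langle x\rangle \to k(\fa)[[x]]$ is faithfully flat. For the horizontal comparison $k(\fa)\langle x\rangle \to k(\fb)\langle x\rangle$, I start from the free (hence faithfully flat) extension $k(\fa)[x] \to k(\fb)[x] = k(\fa)[x] \otimes_{k(\fa)} k(\fb)$ and transfer it to the Henselizations: for every étale $k(\fa)[x]$-algebra $C$ with $C/\langle x\rangle C \cong k(\fa)$ entering the direct limit defining $k(\fa)\langle x\rangle$, the base change $C\otimes_{k(\fa)} k(\fb)$ is étale over $k(\fb)[x]$ with residue $k(\fb)$ (Proposition \ref{prop.etal}.4) and so maps canonically into $k(\fb)\langle x\rangle$. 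Passing to the filtered colimit produces a canonical map $k(\fa)\langle x\rangle \otimes_{k(\fa)} k(\fb) \to k(\fb)\langle x\rangle$; the source is free over $k(\fa)\langle x\rangle$, and the map identifies $k(\fb)\langle x\rangle$ with its $\langle x\rangle$-Henselization, giving the required faithful flatness by Proposition \ref{prop.hensel}.3 together with locality. This is the only delicate point of the proof; once it is in place, the three-step comparison in paragraph two delivers both assertions of the lemma.
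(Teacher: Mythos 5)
Your proof is correct in substance but uses a genuinely different mechanism from the paper. After the identical reduction to cokernels of the matrix $T$, the paper stays entirely elementary: finite-dimensionality is equivalent to $\langle x \rangle^N$ killing the cokernel for some $N$, and once that happens the cokernel over $k(\fb)[[x]]$ is \emph{literally} the cokernel over the finite free $k(\fa)$-algebra $k(\fa)[[x]]/\langle x \rangle^N$ tensored with $k(\fb)$, while the Henselian case is absorbed by the equality $k(\fb)\langle x \rangle/\langle x \rangle^N = k(\fb)[[x]]/\langle x \rangle^N$. No flatness of the big ring maps ever enters. Your argument instead introduces the pivot module $N = M_0 \otimes^h_A k(\fa)$ over $k(\fa)\langle x \rangle$ and transfers both the Nakayama-type annihilation condition and the dimension along faithfully flat base changes; this is tidier conceptually, and it isolates exactly what properties of the three target rings are used, but it shifts all the work into proving that faithful flatness — which the paper's truncation argument avoids entirely. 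The flatness claims are true, and your reduction of them is sound in outline, but one citation is stretched: you invoke Proposition \ref{prop.hensel}.3 for the Henselization map out of $k(\fa)\langle x \rangle \otimes_{k(\fa)} k(\fb)$, while that proposition carries a Noetherian hypothesis and this ring need not be Noetherian when $k(\fb)/k(\fa)$ is an infinite extension. The fix is immediate (the Henselization is always a filtered colimit of étale, hence flat, maps, so it is flat without any Noetherian assumption; alternatively one can descend flatness of $k(\fa)\langle x \rangle \to k(\fb)\langle x \rangle$ from the faithfully flat completion square $k(\fa)\langle x\rangle \to k(\fa)[[x]] \to k(\fb)[[x]] \leftarrow k(\fb)\langle x\rangle$), but as written the citation does not cover the case you need.
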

  
 \begin{proof} $M\hat \otimes_AB$ is considered as an $R\hat \otimes_AB =B[[x]]$-module and $M_0 \otimes^h_AB$ as $R_0 \otimes^h_AB = B\x$-module. Therefore we have 
 $$
\begin{array}{*{20}lcr}
	\hat d_\fa (M)&=&dim_{k(\fa)}(M \hat \otimes_Ak(\fa))\\
	\hat d_\fb (M\hat \otimes_AB)&=&dim_{k(\fb)}(M \hat \otimes_AB \hat \otimes_Bk(\fb))\\
	d^h_\fb (M_0\otimes^h_AB)&=&dim_{k(\fb)}(M_0 \otimes^h_AB \otimes^h_Bk(\fb))
\end{array}
$$
and
$$
\begin{array}{*{20}lcl}
M \hat \otimes_Ak(\fa) &=&  \coker (k(\fa)[[x]]^p   \xrightarrow{\text {$\bar T$}}  k(\fa)[[x]]^q)\\
M \hat \otimes_AB \hat \otimes_Bk(\fb) &=&  \coker (k(\fb)[[x]]^p   \xrightarrow{\text {$\bar T$}}  k(\fb)[[x]]^q)\\
M_0 \otimes^h_AB \otimes^h_Bk(\fb) &=&  \coker (k(\fb)\langle x \rangle^p   \xrightarrow{\text {$\bar T$}}  k(\fb)\langle x \rangle^q)\\
\end{array}
$$
 with $\bar T = (\bar t_{ij})$ and $\bar t_{ij}$ the induced elements in $k(\fa)[x]$ resp. $k(\fb)[x]$.\\
 
 If $\hat d_\fb (M\hat  \otimes_AB) < \infty $ there exists an $N_0$ such that  $\langle x \rangle^N M\hat \otimes_AB \hat\otimes_B k(\fb)=0$
for $N\geq N_0$  and hence
 $$ 
 \begin{array}{*{20}l}
 M \hat \otimes_AB \hat \otimes_Bk(\fb)
&=&  \coker (k(\fb)[[x]])^p/\langle x \rangle^N   \xrightarrow{\text {$\bar T$}}  (k(\fb)[[x]])^q /\langle x \rangle^N\\
 &=&  \big(\coker (k(\fa)[[x]]/\langle x \rangle^N)^p   \xrightarrow{\text {$\bar T$}}  (k(\fa)[[x]]/\langle x \rangle^N)^q\big) \otimes_{k(\fa)}k(\fb). 
  \end{array} 
  $$ 
 Since this holds for every $N\geq N_0$, we obtain  $\hat d_\fa (M) < \infty$. Similarly we can see that  $\hat d_\fa (M) < \infty$ implies $\hat d_\fb (M \otimes_AB) < \infty$
  and  in both cases we obtain $\hat d_\fa (M)=\hat d_\fb (M \otimes_AB) .$ This gives the first equality in the Lemma.
 Since $B\langle x \rangle/  \langle x \rangle^N = B[[x]]/  \langle x \rangle^N$   we get the remaining claims.
\end{proof}

\begin{lemma} \label{lem.hens}
Let $(A,\fm,\k)$ be a local Noetherian Henselian ring  and $R$ a local quasi-finite (i.e. $\dim_\k R/\fm R <\infty$) and finite type $A$-algebra in the Henselian sense\,\footnote{$R$ is an $A$-algebra of finite type in the Henselian sense if $R=A\langle t_1,\ldots,t_s \rangle$ for suitable $t_1,\ldots,t_s \in R$}.
Then $R$ is a finite $A$-algebra, i.e., finitely generated as an $A$-module. 
\end{lemma}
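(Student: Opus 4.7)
The plan is to identify $R$ with the Henselization of a finite-type $A$-subalgebra at a prime ideal, and then invoke the Henselian version of Zariski's Main Theorem. First, I would set $B := A[t_1,\ldots,t_s] \subset R$, an $A$-algebra of finite type as a quotient of $A[x_1,\ldots,x_s]$. Let $\fq := \fm_R \cap B$ be the prime of $B$ lying below the maximal ideal $\fm_R$ of $R$; then $\fq$ lies over $\fm$. The local $A$-algebra map $B_\fq \to R$ factors through the Henselization by its universal property (Definition \ref{def.henselization}), giving a canonical map $\phi \colon B^h_\fq \to R$. I would then argue that $\phi$ is an isomorphism, since both rings serve as the Henselian $A$-subalgebra generated by $t_1,\ldots,t_s$, which is precisely the meaning of ``$R = A\langle t_1,\ldots,t_s\rangle$'' as spelled out in the footnote to the statement.

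Next, I would verify that $B$ is quasi-finite over $A$ at $\fq$. Under the identification $R \cong B^h_\fq$, the Henselization $B_\fq \to B^h_\fq$ is faithfully flat (Proposition \ref{prop.hensel}.4), preserves residue fields, and preserves Krull dimensions. The quasi-finiteness hypothesis $\dim_\k R/\fm R < \infty$ therefore transfers to show that $B_\fq / \fm B_\fq$ is an Artinian local ring of finite dimension over $\k$.

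Finally, I would apply the Henselian version of Zariski's Main Theorem (see, e.g., \cite{Sta19} or \cite{KPR75}): since $A$ is Henselian, $B$ is of finite type over $A$, and $B$ is quasi-finite at $\fq$, there exists a decomposition $B = A' \times C$ in which $A'$ is a finite local $A$-algebra whose maximal ideal corresponds to $\fq$, while $\fm C = C$. Then $B_\fq = A'$ is itself Henselian (being finite over the Henselian ring $A$), so $B^h_\fq = A'$ is finite over $A$, and consequently $R \cong A'$ is finite over $A$. The main obstacle is the first step: the identification $R \cong B^h_\fq$ requires a careful unpacking of the Henselian-generation condition and the universal property of Henselization.
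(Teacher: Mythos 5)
Your overall strategy --- realize $R$ as the Henselization of a finite-type $A$-algebra at a prime where it is quasi-finite, and then invoke the Henselian form of Zariski's Main Theorem over the Henselian base $A$ --- is the standard proof of this fact, and it is presumably what underlies Proposition 1.5 of \cite{KPP78}, which is all the paper itself offers by way of proof. Your third step is sound as written (modulo a small imprecision: in the splitting $B=A'\times C$ one cannot assert $\fm C=C$, only that $\fq$ does not correspond to a prime of $C$; this is harmless). The genuine gap is exactly where you suspect it: the claim that $\phi\colon B^h_\fq\to R$ is an isomorphism for $B=A[t_1,\ldots,t_s]$. What the hypothesis ``$R=A\langle t_1,\ldots,t_s\rangle$'' gives you is only that $\phi$ is \emph{surjective}: $R$ is a quotient $B^h_\fq/J$, where $J$ is generated by the algebraic (in general non-polynomial) relations among the $t_i$ over $A$, which need not be induced from polynomial ones. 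This is not cosmetic, because your second step uses the isomorphism in the wrong direction: faithful flatness of $B_\fq\to B^h_\fq$ lets you pull finiteness of $\dim_\k\bigl(B^h_\fq/\fm B^h_\fq\bigr)$ down to $B_\fq/\fm B_\fq$, but the hypothesis only controls the further quotient $R/\fm R=\bigl(B^h_\fq/\fm B^h_\fq\bigr)/\bar J$, and a proper quotient of a non-quasi-finite local ring can perfectly well be Artinian. So for your particular $B$ the descent of quasi-finiteness is not justified; indeed, proving it for this $B$ is essentially equivalent to the lemma itself, since once $R$ is known to be a finite $A$-module, so is its $A$-submodule $B$.

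The repair is standard and uses the same device as the paper's Lemma \ref{lem.hensel}: write $R=A\langle x_1,\ldots,x_s\rangle/I$. Since $A\langle x\rangle$ is Noetherian, $I=\langle f_1,\ldots,f_r\rangle$, and the $f_j$ already live in some \'etale $A[x]$-algebra $B'$ with a prime $\fq'$ over $\langle\fm,x\rangle$ inducing a trivial residue field extension. Put $C:=B'/\langle f_1,\ldots,f_r\rangle$, a finite-type $A$-algebra; since Henselization commutes with passing to quotients, $C^h_{\fq'}\cong R$ on the nose. Now your second and third steps apply verbatim with $C$ in place of $B$: quasi-finiteness descends from $R=C^h_{\fq'}$ to $C_{\fq'}$ by faithful flatness of the Henselization, the Henselian Zariski Main Theorem splits $C=C'\times D$ with $C'$ finite and local over $A$, the ring $C_{\fq'}=C'$ is already Henselian because it is finite and local over the Henselian ring $A$, and hence $R=C^h_{\fq'}=C'$ is a finite $A$-module.
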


\begin{proof}
This is an immediate consequence of Proposition 1.5 of \cite{KPP78}.
\end{proof}

\begin{corollary} \label{cor.hens}
Let $(A,\fm,\k)$ be a local Noetherian Henselian ring and $R$ a local and finite type $A$-algebra  in the Henselian sense.
If $M$ is a finitely generated and quasi-finite (i.e. $\dim_{\k} M/\fm M < \infty$) $R$-module, then $M$ is a finitely generated $A$-module.
\end{corollary}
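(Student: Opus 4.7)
The strategy is to reduce the claim to Lemma~\ref{lem.hens} by passing to $R' := R/\Ann_R(M)$. Then $R'$ is local, Noetherian, and of finite type over $A$ in the Henselian sense (all three properties are inherited from $R$ by quotients), and $M$ is still finitely generated over $R'$. Therefore, once we verify that $R'$ is quasi-finite over $A$, Lemma~\ref{lem.hens} will give that $R'$ is finitely generated as an $A$-module, and consequently $M$---being finitely generated over $R'$---will be finitely generated as an $A$-module, which is what we want.

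The bulk of the proof is thus to show that $R'/\fm R' = R/(\fm R + \Ann_R(M))$ has finite $\k$-dimension. I first identify its prime spectrum. Under the bijection $V(\fm R) \cong \Spec(R/\fm R)$, the primes of $R$ containing $\fm R + \Ann_R(M)$ correspond, via $\Supp_R(M) = V(\Ann_R(M))$, to $\Supp_{R/\fm R}(M/\fm M)$. Since $M/\fm M$ is finitely generated over $R/\fm R$ and finite-dimensional over $\k$, it has finite length over $R/\fm R$, so its support is a finite set of maximal ideals. But $R/\fm R$ is local with unique maximal ideal $\fn/\fm R$, and (assuming $M\neq 0$, else the claim is trivial) $M/\fm M\neq 0$, so the support collapses to the single point $\{\fn/\fm R\}$. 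Consequently $R'/\fm R'$ is a Noetherian local ring with nilpotent maximal ideal, i.e.\ Artinian local. To upgrade Artinian to finite-dimensional over $\k$, I use that for any Artinian local ring $S$ containing $\k$ with residue field $\k_S$, one has $\dim_\k S = \ell(S)\cdot [\k_S:\k]$ via a composition series, so it remains to show $[\k':\k] < \infty$ where $\k' := R/\fn$. For this, observe that $M/\fn M$ is a nonzero $\k'$-vector space (Nakayama on the local ring $R$) and is a $\k$-quotient of $M/\fm M$ since $\fm\subseteq\fn$, giving
\[
[\k':\k] \;\leq\; \dim_{\k'}(M/\fn M)\cdot [\k':\k] \;=\; \dim_\k(M/\fn M) \;\leq\; \dim_\k(M/\fm M) \;<\;\infty.
\]

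The main delicate point to verify is that the class of ``local $A$-algebras of finite type in the Henselian sense'' really is closed under taking quotients, so that $R' = R/\Ann_R(M)$ is an admissible input for Lemma~\ref{lem.hens}; this should be immediate from tracing the definition through a presentation $R = A\langle t_1,\dots,t_s\rangle$, but it deserves to be spelled out. Apart from that, the argument hinges on the support computation which confines the fibre $R'/\fm R'$ to the single closed point $\fn$, together with the residue-field bound supplied by $M/\fn M$; it is the latter that provides the precise link between finite $\k$-dimension of $M/\fm M$ and finite $\k$-dimension of $R/\fn$, and hence of the whole Artinian ring $R'/\fm R'$.
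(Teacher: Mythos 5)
Your proof is correct and takes essentially the same route as the paper: pass to $R':=R/\Ann_R(M)$, check that $\dim_{\k}M/\fm M<\infty$ forces $R'$ to be quasi-finite over $A$, and then apply Lemma~\ref{lem.hens}. The paper asserts the middle implication in one line without justification; your support computation and residue-field bound via $M/\fn M$ correctly fill in exactly that step.
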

 
 \begin{proof}
Passing from $R$ to $R/\Ann_R (M)$ we may assume that $\Ann_R (M)=0$. In this case $\dim_{\k} M/\fm M < \infty$ implies $\dim_{\k} R/\fm R < \infty$.
Lemma \ref{lem.hens} implies that $R$ is a finitely generated $A$-module. Since $M$ is finitely generated over $R$ it follows that $M$ is a finitely generated $A$-module.
\end{proof}

\begin{theorem} \label{thm.hens}
Let $A$ be a Noetherian ring, $R = A[[x]]$, $x=(x_1,\ldots,x_n),$ and $M$ a finitely generated $R$-module admitting a presentation
$$ R^p \xrightarrow{\text {$T$}} R^q \to  M \to 0$$
with algebraic presentation matrix  $T=(t_{ij})$, $t_{ij} \in A[x]$ or, more generally, $\in A\x$. 
Fix $\fp \in \Spec A$ with $ \hat d_\fp(M) <\infty$. Then there is an open neighbourhood $U$ of $\fp$  in $\Spec A$ such that  
$$\hat d_\fq(M) \leq  \hat d_\fp(M) \ \text {for all } \fq \in U.$$
\end{theorem}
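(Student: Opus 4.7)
The plan is to use Henselization and étale descent to reduce to a situation where $M_0 \otimes^h_A B$ is finitely generated as a $B$-module on a Zariski neighborhood of some point $\fb \in \Spec B$ lying over $\fp$, deduce semicontinuity in that setting, and transport the conclusion back to $\Spec A$ via openness of étale morphisms. By Lemma \ref{lem.10}, for any $A$-algebra $B$ and $\fb \in \Spec B$ with $\fb \cap A = \fp$ one has $\hat d_\fp(M) = d^h_\fb(M_0 \otimes^h_A B)$, where $M_0 = \coker(R_0^p \xrightarrow{T} R_0^q)$ and $R_0 = A\langle x \rangle$. It therefore suffices to construct an étale extension $A \to B$ together with a prime $\fb \in \Spec B$ satisfying $\fb \cap A = \fp$ and $k(\fb) = k(\fp)$ such that $\fb' \mapsto d^h_{\fb'}(M_0 \otimes^h_A B)$ is semicontinuous at $\fb$; Proposition \ref{prop.etal}.5 will then produce an open neighborhood of $\fp$ in $\Spec A$ on which $\hat d_\fq(M)$ is bounded by $\hat d_\fp(M)$.

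To obtain this finite generation, set $C := (A_\fp)^h$, the Henselization of $A_\fp$ at $\fp A_\fp$; by Proposition \ref{prop.hensel} this is a Noetherian Henselian local ring with residue field $k(\fp)$. By Lemma \ref{lem.pres}, $M_0 \otimes^h_A C = \coker(C\langle x \rangle^p \to C\langle x \rangle^q)$ is finitely generated over the local Henselian $C$-algebra $C\langle x \rangle$, and its fibre over the maximal ideal of $C\langle x \rangle$ has dimension $\hat d_\fp(M) < \infty$ by Lemma \ref{lem.10}, so Corollary \ref{cor.hens} gives that $M_0 \otimes^h_A C$ is a finitely generated $C$-module. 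Choose $C$-generators $m_1, \ldots, m_s$ of $M_0 \otimes^h_A C$ along with the finitely many coefficients in $C$ needed both to express the action of $x_1, \ldots, x_n$ on the $m_j$ and to decompose the standard $C\langle x \rangle$-module generators of $M_0 \otimes^h_A C$ as $C$-linear combinations of the $m_j$. Since $C$ is the filtered colimit of étale $A$-algebras endowed with a pointed prime over $\fp$ with trivial residue field extension, Lemma \ref{lem.hensel} produces an étale $A \to B$ with such a $\fb$ containing all of this data; the faithful flatness of $B_\fb \to C = B_\fb^h$ (Proposition \ref{prop.hensel}.4) forces the descended relations to hold on $(M_0 \otimes^h_A B)_\fb$, and spreading out yields $f \notin \fb$ such that $M_0 \otimes^h_A B$ is finitely generated as a $B_f$-module on a Zariski neighborhood of $\fb$.

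On this neighborhood, $M_0 \otimes^h_A B$ is a finitely generated (and, $B$ being Noetherian, finitely presented) $B$-module with its inherited $B\langle x \rangle$-structure. Semicontinuity of $\fb' \mapsto d^h_{\fb'}(M_0 \otimes^h_A B)$ at $\fb$ then follows in the spirit of Proposition \ref{prop.ann}.2: Lemma \ref{lem.Afin} applied to the finitely presented $B$-modules $(M_0 \otimes^h_A B)/\langle x \rangle^m (M_0 \otimes^h_A B)$, for $m$ large enough that the dimension of this truncation at $\fp$ equals $\hat d_\fp(M)$, produces a fixed-$m$ upper-semicontinuity statement that bounds $d^h_{\fb'}(M_0 \otimes^h_A B)$ on a neighborhood of $\fb$. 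Combined with Lemma \ref{lem.10} and the openness of $\Spec B \to \Spec A$ (Proposition \ref{prop.etal}.5), this yields the semicontinuity of $\fq \mapsto \hat d_\fq(M)$ at $\fp \in \Spec A$. The main obstacle is the descent step, where the algebraic-presentation hypothesis on $T$ is essential: it is precisely what makes $T$ available as a matrix with entries in $C\langle x \rangle$ so that Corollary \ref{cor.hens} becomes applicable, and without it the entire reduction collapses.
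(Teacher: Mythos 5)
Your proposal is correct and follows essentially the same route as the paper's proof: reduce via Lemma \ref{lem.10} to the Henselization $A^h$ of $A_\fp$, apply Corollary \ref{cor.hens} to get finite generation of $M_0\otimes^h_A A^h$ over $A^h$, descend this to an \'etale neighbourhood $\Spec B$ using Lemma \ref{lem.hensel}, invoke Lemma \ref{lem.Afin} there, and push the open set down via the openness of \'etale maps. Your version merely spells out two steps the paper leaves implicit (spreading out generators and relations to descend finite generation, and the truncation argument identifying the Henselian fibre dimension with the ordinary one over $B$), which is a welcome but not substantively different elaboration.
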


\begin{proof}
Recall that $R_0=A \langle x \rangle$ is the Henselization of $A[x]$ with respect to $\langle x \rangle$ and $M_0=\coker (R_0^p  \xrightarrow{\text {$ T$}} R_0^p)$.
Denote by $A^h$  the henselization of the local ring $A_\fp$ with respect to its maximal ideal. We set 
$R^h :=  A^h  \langle x \rangle$ and\,\footnote{Note that $R^h$ is the Henselization of $A_\fp[x]$ with respect to the maximal ideal 
$\langle\fp,x\rangle.$} 
$M^h :=  \coker \big((R^h)^p   \xrightarrow{\text {$T$}}  (R^h)^q\big)=M_0\otimes^hR^h.$
Then 
Lemma \ref{lem.10} implies $\hat d_\fp(M)=d^h_\fp(M^h)$
and Corollary \ref{cor.hens} that  $M^h$ is a finitely generated $A^h$-module ($R^h$ is a finite type $A^h$-algebra in the Henselian sense).
Lemma \ref{lem.hensel} implies that there is an \'etale neighbourhood $\pi: \Spec B \to \Spec A$ of $\fp$ such that 
$M_0 \otimes^h_AB= \coker \big((R_0 \otimes^h_AB)^p \xrightarrow{\text {$T$}} (R_0 \otimes^h_AB)^q\big)$
is a finitely generated $B$-module and $M_0 \otimes^h_AB\otimes^h_BA^h=M^h$.
Choose $\fb \in \Spec B$ such that $ \fb \cap A=\fp.$ This is possible because of Lemma \ref{lem.hensel}. Corollary \ref{cor.hens} and Lemma \ref{lem.Afin} imply that there is an open neigbourhood $\tilde U \subset \Spec B$ of $\fb$ 
such that for $\fc \in \tilde U$ we have $d_\fc(M_0 \otimes^h_AB) \le d_{ \fb}(M_0 \otimes^h_AB).$
Since $\pi $ is \'etale it is open (Proposition \ref{prop.etal}), $U:= \pi (\tilde U)$ is an open neighbourhood of $\fp$ in $\Spec A$ and for any $\fq \in U \cap \Spec A$ there exists a $\fc \in \tilde U \cap \Spec B$ with $\fc \cap A = \fq$.
From Lemma \ref{lem.10} we obtain 
$\hat d_\fq(M)=d_\fc(M_0\otimes^h_AB) \le d_{ \fb}(M_0 \otimes^h_AB)=\hat d_\fp(M).$
\end{proof}

The important property of Henselian local rings is that quasi-finite implies finite (in the sense of Corollary \ref{cor.hens}). Examples of Henselian local rings are quotient rings of the algebraic power series rings $A=\k\langle y\rangle/I$ over some field $\k$,  and analytic $\k$-algebras.\footnote{An analytic $\k$-algebra is the quotient  $\k\{y\}/I$, $y = ( y_1,\ldots,y_s)$, of a convergent power series ring over a complete real-valued field $\k$ (cf. \cite{GLS07}).
 E.g.,  if  $\k$ is any field with the trivial valuation, then $\k\{y\} =\k[[y]]$ is the formal power series ring; if  $\k \in \{\R, \C\}$,  then  $\k\{y\}$ is the usual convergent power series ring. 
}  

\medskip

If $A$ is a complete local ring containing a field, then any finitely generated $R$-module $M$ can be polynomially presented and semicontinuity of $\hat d_\fp(M)$ holds, as we show now.  We start with the following proposition, based on the Weierstrass division theorem.

\begin{proposition}\label{prop.complete}
Let $(A,\fm,\k)$ be  a Noetherian complete  local ring containing $\k$, $R=A[[x]]$, $x=(x_1,\dots,x_n)$,
and  $M$ a finitely generated $R$-module such that $\dim_{\k}M/\fm M <\infty$. Let $J=\Ann_R(M)$.
Then there exist $f_1,\ldots,f_s$, $s \geq n$, with the following properties:
\begin{enumerate}
\item $f_i \in A[x]$ for all $i$.
\item $f_{n-i+1} \in A[x_1,\ldots,x_i]$ is a Weierstrass polynomial with respect to $x_i$ for $i=1,\ldots,n$.
\item $J=\langle f_1,\ldots,f_s\rangle A[[x]]$.
\end{enumerate}
\end{proposition}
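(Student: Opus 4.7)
The plan is to first prove that $R/J$ is a finitely generated $A$-module, then apply Cayley--Hamilton to multiplication by each $x_i$ on $R/J$ to produce Weierstrass polynomials in $J$, and finally invoke iterated Weierstrass division to pass to polynomial generators.

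The delicate preliminary step is to show that $M$ itself is finitely generated as an $A$-module. Since $\dim_{\k} M/\fm M < \infty$, the module $M/\fm M$ is Artinian over $\k[[x]]$, so $\x^N M \subset \fm M$ for some $N$. Together with $\fm_R = \fm + \x$, pigeon-holing factors yields $\fm_R^{(N+1)k} M \subset \fm^k M$ for every $k$ (any monomial of length $(N+1)k$ either uses at least $k$ factors from $\fm$ or more than $Nk$ factors from $\x$, and by iterating $\x^N M \subset \fm M$ one gets $\x^{Nk} M \subset \fm^k M$). Since $\fm^k M \subset \fm_R^k M$ trivially, the $\fm$-adic and $\fm_R$-adic topologies on $M$ coincide. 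Being finitely generated over the complete local Noetherian ring $(R,\fm_R)$, $M$ is $\fm_R$-adically complete and separated, hence also $\fm$-adically so. On the other hand, each graded piece $\fm^{j-1}M/\fm^j M$ is a quotient of $\fm^{j-1}/\fm^j \otimes_{\k} M/\fm M$, hence finite-dimensional over $\k$, so $M/\fm^k M$ has finite length over $A/\fm^k$. Lifting a $\k$-basis $\bar m_1,\dots,\bar m_s$ of $M/\fm M$ to $m_1,\dots,m_s \in M$, Nakayama applied to the Artinian local ring $A/\fm^k$ shows the $m_i$'s generate $M/\fm^k M$ uniformly in $k$. Mittag--Leffler holds for the resulting inverse system (the kernels have finite length), so the $m_i$'s generate $\varprojlim_k M/\fm^k M$ over $A$, and by completeness this limit equals $M$.

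Since $J = \operatorname{Ann}_R M$, the action map $R/J \hookrightarrow \operatorname{End}_A(M)$ is injective, and $\operatorname{End}_A(M)$ is finite over the Noetherian ring $A$, so $R/J$ is finite over $A$ as well. Pick a \emph{minimal} generating set $n_1,\dots,n_s$ of $R/J$ over $A$; by Nakayama the images $\bar n_1,\dots,\bar n_s$ form a $\k$-basis of $(R/J)\otimes_A \k = \k[[x]]/\bar J$. For each $i=1,\dots,n$, choose $a^{(i)}_{jk}\in A$ with $x_i n_j = \sum_k a^{(i)}_{jk} n_k$ in $R/J$, and set $\chi_i(T) := \det(T\cdot I - (a^{(i)}_{jk})) \in A[T]$. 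By Cayley--Hamilton, $\chi_i(x_i)\cdot(R/J)=0$, so $\chi_i(x_i)\in J$. Reducing modulo $\fm$, the matrix $(\bar a^{(i)}_{jk})$ represents multiplication by $x_i$ on the basis $\bar n_1,\dots,\bar n_s$ of the local Artinian algebra $\k[[x]]/\bar J$; since $x_i$ lies in its maximal ideal, the matrix is nilpotent and its characteristic polynomial is $T^s$. Hence $\chi_i(T)\equiv T^s \bmod \fm$, so $f_{n-i+1}:=\chi_i(x_i) \in A[x_i] \subset A[x_1,\dots,x_i]$ is a Weierstrass polynomial in $x_i$, satisfying conditions~1 and~2.

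To finish, iterated Weierstrass division (first by $f_1$ in $x_n$, then by $f_2$ in $x_{n-1}$, and so on) reduces any $g\in R$ modulo $\langle f_1,\dots,f_n\rangle R$ to an element of $A[x_1,\dots,x_n]$; hence $R = A[x] + J$ and the composite $A[x]\to R \to R/J$ is surjective. Its kernel $J\cap A[x]$ is a finitely generated ideal of the Noetherian ring $A[x]$, with some generators $g_1,\dots,g_t$. Setting $f_{n+j}:=g_j$ gives $s = n+t \geq n$ generators; for any $g \in J$, Weierstrass division writes $g = \sum q_i f_i + p$ with $q_i\in R$ and $p \in A[x]\cap J = \langle g_1,\dots,g_t\rangle A[x]$, so $g \in \langle f_1,\dots,f_s\rangle R$, proving condition~3.

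The main obstacle is the first step: deducing $A$-finiteness of $M$ from the mere quasi-finiteness $\dim_{\k} M/\fm M < \infty$ requires the topological comparison above, a uniform Nakayama estimate, and a Mittag--Leffler inverse-limit argument. Once $A$-finiteness is in hand, everything else is a routine application of Cayley--Hamilton and the Weierstrass division theorem.
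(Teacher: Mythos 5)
Your proof is correct, but it takes a genuinely different route from the paper's. The paper argues by induction on the number of variables: from $\dim_\k R/(J+\fm R)<\infty$ it extracts a single $x_n$-general element $x_n^b+f\in J$, applies Weierstrass \emph{preparation} to turn it into a Weierstrass polynomial $g$, uses Weierstrass division to conclude that $M$ is finite over $R_0=A[[x_1,\dots,x_{n-1}]]$ and that $J$ is generated by $g$ together with elements of $R_0[x_n]$ of degree $<b$, and then invokes the induction hypothesis for $\Ann_{R_0}(M)$; the final polynomial generators are remainders after successive divisions. You instead establish $A$-finiteness of $M$ (hence of $R/J$, via $R/J\hookrightarrow\operatorname{End}_A(M)$) in one stroke and then obtain all $n$ Weierstrass polynomials simultaneously as Cayley--Hamilton characteristic polynomials of the multiplication operators $x_i$ on $R/J$, the nilpotence of $\bar x_i$ on the Artinian fibre forcing $\chi_i(T)\equiv T^s\bmod\fm$; the remaining generators come from Noetherianity of $A[x]$ applied to $J\cap A[x]$ after iterated division. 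Your approach buys something: it needs only Weierstrass \emph{division} (not preparation), avoids the induction, and produces Weierstrass polynomials lying in the univariate rings $A[x_i]$, all of the same degree --- a slightly stronger conclusion than stated. One remark on economy: your first step (coincidence of the $\fm$- and $\fm_R$-adic topologies, completeness, uniform Nakayama, Mittag--Leffler) can be collapsed to three lines. Lift a $\k$-basis of $M/\fm M$ to $m_1,\dots,m_s\in M$ and let $N=\sum_i Am_i$; since $A\to\k$ is surjective one gets $M=N+\fm M$, so $\fm R\cdot(M/N)=M/N$ with $M/N$ finitely generated over $R$ and $\fm R$ contained in the Jacobson radical $\fm_R$ of the local ring $R$, whence $M=N$ by Nakayama over $R$. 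With that simplification your argument is arguably cleaner than the paper's, and it does not actually use that $A$ contains a field except through the Weierstrass division theorem over a complete local base, which the paper accesses via Cohen's structure theorem.
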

\begin{proof}
To prove the statements we use induction on $n$, the number of the variables $x$. 
The assumption implies that  $\dim_{\k}(R/J+\fm R) <\infty$, i.e. the ideal $J+\fm R$ is primary to the maximal ideal $\x + \fm R$ of $R$.
This implies that $x_n^b \in J+\fm R$ for some $b$. Therefore there exists $g \in J$, $g=x_n^b+f$ with $f \in \fm R$.  We know by Cohen's structure theorem that  $A = \k[[y]]/I$ for suitable variables $y$ and an ideal $I \subset \k[[y]]$. We can apply in the following the Weierstrass preparation and division theorem to representatives in $\k[[y,x]]$ and then take residue classes mod $I$. Obviously $g$ is $x_n$-general.
The Weierstrass preparation theorem implies  $g=uh$, u a unit in $R$, and $h \in A[[x_1,\ldots,x_{n-1}]][x_n]$ a Weierstrass polynomial with respect to $x_n$.
To simplify the notation we assume that $g$ is already a Weierstrass polynomial with respect to $x_n$. Setting $R_0=A[[x_1,\ldots,x_{n-1}]]$,
the Weierstrass division theorem (cf. \cite[Theorem I.1.8]{GLS07}) says that for 
any $f$ in $R$ there exist unique $h\in R$ and $r\in  R_0[x_n]$ such that
$f = hg+ r$, $\deg_{x_n}(r) \leq b-1$. In other words, as $R_0$ modules we have
\begin{align}
\tag{*}\label{*}
R = R\cdot g \oplus R_0 \cdot x_n^{b-1} \oplus  R_0 \cdot x_n^{b-2} \oplus \cdots \oplus R_0.
\end{align} 
We may thus assume that $J=\langle g_1,\ldots,g_r \rangle$ with $g_1=g$ and $g_i \in R_0[x_n]$ with $\deg_{x_n}(g_i) \leq b-1$.

If $n=1$ then $R_0=A$ and the claim follows from (\ref{*}).
If $n \geq 2$ then $M$ is a finitely generated $R_0$-module since
\begin{itemize}
\item $R/\langle g \rangle$ is finite over $R_0$ and
\item $g \in \Ann_R(M)$, i.e. $M$ is a finitely generated $R/\langle g \rangle$-module.
\end{itemize}
Now let $J_0=\Ann_{R_0}(M)$.
By induction hypothesis there are  $f_2,\ldots,f_l$, $l\geq n$,  such that 
\begin{enumerate}
\item $f_i \in A[x_1,\ldots,x_{n-1}]$ for all $i$.
\item $f_{n-i+1} \in A[x_1,\ldots,x_i]$ is a Weierstrass polynomial with respect to $x_i$ for $i=1,\ldots,n-1$.
\item $J_0= \langle f_2,\ldots,f_l\rangle R_0$.
\end{enumerate}
Now denote by $f_1$ be the remainder of the division of $g$ successively by $f_2,\ldots,f_n$ and by  $f_{l+i}$ the remainder of $g_i$ by $f_2,\ldots,f_n$ for $i>1$. These are polynomials in  $x_1, \dots, x_n$.
Then $f_1,\ldots,f_s$ satisfy the conditions 1. to 3. of the proposition.
\end{proof}
\begin{corollary} \label{cor.complete}
Let $(A,\fm,\k)$ be  a Noetherian complete  local ring containing $\k$, $R=A[[x]]$ and $M$ a finitely generated $R$-module such that $dim_{\k}M/\fm M <\infty$. Then 
$M$ is polynomially presented. 
\end{corollary}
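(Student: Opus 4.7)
The plan is to use Proposition \ref{prop.complete} to reduce the claim to a polynomial-ring computation. From that proposition we extract generators $f_1,\ldots,f_s \in A[x]$ of $J := \Ann_R(M)$ with the property that $f_{n-i+1}$ is a Weierstrass polynomial in $x_i$ for $i = 1,\ldots,n$. Each $f_{n-i+1}$ is monic in $x_i$ with remaining coefficients in $A[x_1,\ldots,x_{i-1}]$, so iterated division by $f_1, f_2, \ldots, f_n$ --- Weierstrass division in $R = A[[x]]$ and ordinary polynomial long division in $A[x]$, performed in the order that does not re-introduce degrees already reduced --- produces for every element a unique normal form: a polynomial of degree $< d_i$ in each $x_i$. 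Consequently the inclusion $A[x] \hookrightarrow R$ descends to an isomorphism
$$
A[x]/\langle f_1,\ldots,f_n\rangle A[x] \;\xrightarrow{\;\sim\;}\; R/\langle f_1,\ldots,f_n\rangle R,
$$
and quotienting further by $f_{n+1},\ldots,f_s \in A[x]$ yields the ring isomorphism $A[x]/\langle f_1,\ldots,f_s\rangle A[x] \xrightarrow{\sim} R/J$. In particular, $R/J$ is a finitely generated $A$-module.

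Because $M$ is finitely generated over $R$ and annihilated by $J$, it is finitely generated as an $R/J$-module, hence as an $A$-module, and in particular as a module over the Noetherian ring $A[x]$. So $M$ admits a finite $A[x]$-presentation
$$
A[x]^P \xrightarrow{\;T\;} A[x]^Q \to M \to 0
$$
with $T$ a matrix of polynomials in $A[x]$. Extension of scalars over $A[x]$ to $R$ yields the right-exact sequence $R^P \xrightarrow{T} R^Q \to M \otimes_{A[x]} R \to 0$ of $R$-modules, with the same polynomial matrix $T$.

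It remains to identify $M \otimes_{A[x]} R$ with $M$ as an $R$-module. Since $M$ is killed by $\langle f_1,\ldots,f_s\rangle A[x]$, it is already a module over $A[x]/\langle f_i\rangle$, and by the first step $(A[x]/\langle f_i\rangle) \otimes_{A[x]} R = R/\langle f_i\rangle R = R/J$ together with the ring isomorphism $A[x]/\langle f_i\rangle \cong R/J$. Therefore
$$
M \otimes_{A[x]} R \;=\; M \otimes_{A[x]/\langle f_i\rangle} \bigl((A[x]/\langle f_i\rangle) \otimes_{A[x]} R\bigr) \;=\; M \otimes_{R/J} R/J \;=\; M,
$$
producing the desired polynomial $R$-presentation of $M$. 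The only substantive ingredient is the Weierstrass/Euclidean identification of the quotient rings in the first step; once that is in hand, the remaining steps are purely formal.
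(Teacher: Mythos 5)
Your proof is correct, and it rests on the same two pillars as the paper's: Proposition \ref{prop.complete} and Weierstrass division by the polynomials $f_{n-i+1}$. The finishing move differs, though. The paper takes an arbitrary power-series presentation matrix $T$ of $M$, regards it as a presentation over $R/J$, and replaces each entry by its (polynomial) remainder under successive Weierstrass division by $f_n,\dots,f_1$; implicit there is that a presentation over $R/J$ lifts back to one over $R$ by adjoining the columns $f_j e_i$ (themselves polynomial), so that changing entries modulo $J$ is harmless. You instead first upgrade the division statement to the ring isomorphism $A[x]/\langle f_1,\dots,f_s\rangle A[x] \cong R/J$ (which does follow from the uniqueness of remainders, since division by a polynomial that is monic in $x_i$ with coefficients in $A[x_1,\dots,x_{i-1}]$ gives the same remainder whether performed in $A[x]$ or in $R$), then present $M$ over the Noetherian ring $A[x]$ using its finiteness over $A$, and base-change along $A[x]\to R$, identifying $M\otimes_{A[x]}R$ with $M$ via that isomorphism. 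What your route buys is that the question of why the polynomial matrix still presents $M$ over $R$ is answered formally by right-exactness of $\otimes$, rather than by an entrywise reduction whose justification the paper leaves terse; what it costs is the extra (true but not free) injectivity half of the isomorphism $A[x]/\langle f_\bullet\rangle \to R/\langle f_\bullet\rangle R$, which you correctly flag as the one substantive ingredient.
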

\begin{proof} Assume $M$ has a presentation matirx $T =(g_{ij})$, $g_{ij} \in A[[x]]$.
Let $J=\Ann_R(M)$. The assumption implies that $\dim_{\k}R/(J+\fm R) <\infty$. Using Proposition \ref{prop.complete} we obtain  that $R/J$  is a $A$-finite and
$J=\langle f_1,\ldots,f_s \rangle $ with $f_{n-i+1} \in A[x_1,\ldots,x_i]$ a Weierstrass polynomial with respect to $x_i$ for $i=1,\ldots,n, n \leq s$.
This implies that $M$ has a presentation as $R/J$-module 
with presentation matrix $T$ having entries in $R/J$. Now we can divide representatives in $R$  of the entries of $T$ successively by the Weierstrass polynomials $f_{n-i+1}$, $i=1,\dots,n$. The remainders are polynomials in $A[x]$ representing the entries of $T$, which proves the claim.
\end{proof}
\medskip

    Let us collect the cases for which we proved that semicontinuity of $\hat d_\fp(M)$ holds. 
  \begin{corollary}  \label{cor.qf}
  Let $A$ be Noetherian and $M$ a finitely generated $R$-module.
Let $\fp \in \Spec A$ and assume that one of the following conditions is satisfied:
\begin{enumerate}
\item $M$ is finitely generated as $A$-module, e.g. $\Supp_R (M) \subset V(\x)$, or
\item $\dim A = 1$, or 
\item $M$ is algebraically $R$-presented, or
\item  $(A,\fm,\k)$ is a complete local ring containing a field\,\footnote{By Cohen's structure theorem this is equivalent to $A\cong \k[[y]]/I$, $\k$ a field.}.
\end{enumerate}
\noindent 
Then there is an open neighbourhood $U \subset \Spec A$  of $\fp$ such that $\hat d_\fq(M) \leq  \hat d_\fp(M) $ for all $\fq \in U.$
In particular, the quasi-completed-finite locus $\{\fp \in \Spec A \, | \,  \hat d_\fp (M) < \infty \}$ is open.
 \end{corollary}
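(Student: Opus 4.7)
The plan is to dispose of each of the four cases by invoking the corresponding semicontinuity theorem already established earlier in the paper. The case $\hat d_\fp(M)=\infty$ is trivial, as then any open neighbourhood $U$ of $\fp$ satisfies $\hat d_\fq(M)\le\hat d_\fp(M)$ vacuously; so I assume $\hat d_\fp(M)<\infty$ throughout. For case~(1), Proposition~\ref{prop.ann} delivers the conclusion: part~2 handles the case where $M$ is finitely generated over $A$, while part~1 covers the sub-hypothesis $\Supp_R(M)\subset V(\x)$ (in which $M$ is automatically $A$-finite by the argument given there). For case~(2), the assumption $\dim A=1$ forces $\dim_\fp\Supp_A(M)\le 1$, so Theorem~\ref{thm.1dim} produces the desired neighbourhood. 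For case~(3), $M$ is algebraically $R$-presented by hypothesis, so Theorem~\ref{thm.hens} applies directly.

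The main work is in case~(4), which I plan to reduce to case~(3) by means of Corollary~\ref{cor.complete}. That corollary says: if $(A,\fm,\k)$ is a Noetherian complete local ring containing $\k$ and $\dim_\k M/\fm M<\infty$, then $M$ admits a polynomial presentation matrix, hence in particular an algebraic one. By Lemma~\ref{lem.maxA}(i) we have $\hat d_\fm(M)=\dim_\k M/\fm M$, so once the completed fibre dimension at the closed point is finite, Corollary~\ref{cor.complete} furnishes an algebraic presentation of $M$ and Theorem~\ref{thm.hens} produces the open neighbourhood of $\fp$ on which $\hat d_\fq(M)\le\hat d_\fp(M)$. This is the step I expect to be the main obstacle, since one must either verify the finiteness of $\hat d_\fm(M)$ directly or, if this fails, first replace $M$ by the quotient $M/QM$ obtained by deleting the primary components of $\Ann_R(M)$ not contained in $\fn_\fp$ via Lemma~\ref{lem.assprimes}.3 (which leaves $\hat d_\fq(M)$ unchanged for $\fq$ near $\fp$), and only then invoke Corollary~\ref{cor.complete}.

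Finally, for the openness of the quasi-completed-finite locus, I would take an arbitrary $\fp$ in that locus, so $\hat d_\fp(M)<\infty$. The semicontinuity just established (under whichever of the four hypotheses is in force) yields an open neighbourhood $U\ni\fp$ with $\hat d_\fq(M)\le\hat d_\fp(M)<\infty$ for every $\fq\in U$. Hence $U$ is entirely contained in the quasi-completed-finite locus, which proves that this locus is open in $\Spec A$.
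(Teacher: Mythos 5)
Your proof follows the paper's own argument step for step: case~1 is Proposition~\ref{prop.ann}, case~2 is Theorem~\ref{thm.1dim} via $\dim_\fp\Supp_A(M)\le\dim A=1$, case~3 is Theorem~\ref{thm.hens}, case~4 is reduced to case~3 by Corollary~\ref{cor.complete}, and the openness of the quasi-completed-finite locus is deduced exactly as you do. So the approach is the same, and cases 1--3 and the final claim are fine.

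The one place you go beyond the paper is your worry in case~4, and that worry is legitimate: the paper's proof of case~4 is literally ``follows from 3.\ and Corollary~\ref{cor.complete}'' and never addresses the hypothesis $\dim_\k M/\fm M<\infty$ of that corollary. As you note, by Lemma~\ref{lem.maxA}(i) this hypothesis is $\hat d_\fm(M)<\infty$, hence automatic when $\fp=\fm$; but for a non-maximal $\fp$ the standing assumption $\hat d_\fp(M)<\infty$ does not imply it, and your proposed repair --- discarding via Lemma~\ref{lem.assprimes}.3 the primary components of $\Ann_R(M)$ not contained in $\fn_\fp$ --- does not restore it in general. For instance, take $A=\k[[s,t]]$, $R=A[[x]]$, $M=R/\langle sx-t\rangle$ and $\fp=\langle t\rangle$: here $\Ann_R(M)=\langle sx-t\rangle$ is prime and contained in $\fn_\fp=\langle t,x\rangle$, so the reduction removes nothing, $\hat d_\fp(M)=\dim_{\k((s))}\k((s))[[x]]/\langle sx\rangle=1<\infty$, yet $M/\fm M\cong\k[[x]]$ is infinite dimensional, so Corollary~\ref{cor.complete} is not applicable. (This particular $M$ is polynomially presented, so case~3 rescues it directly; the point is only that the reduction step itself is not valid.) In short, you have correctly located a gap, but it is a gap in the paper's own one-line proof of case~4 for non-maximal primes rather than one you introduced; closing it requires a further argument for non-maximal $\fp$ with $\hat d_\fm(M)=\infty$ (for example by routing that case through Proposition~\ref{prop.power}, whose hypotheses one would then have to verify), or else restricting the appeal to Corollary~\ref{cor.complete} to the situation $\hat d_\fm(M)<\infty$.
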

 
\begin{proof}
Statement 1. follows from Proposition \ref{prop.ann}, statement 2. from Theorem \ref{thm.1dim} and 3. from Theorem \ref{thm.hens}. Statement 4. follows from 3. and Corollary \ref{cor.complete}.
\end{proof}
We do not know if semicontinuity of $\hat d_\fp(M)$ holds in general for $A$ Noetherian of any dimension and $M$ finitely but not necessarily algebraically presented over $R$.
\smallskip 

\begin{remark}\label{rm.conv} {\em
For completeness we recall cases where semicontinuity of the usual fibre dimension $d_\fp(M)$ on $\Spec A$  holds if $M$ is an arbitrary finitely presented $R$-module, for different (local) rings $A$ and $R$. 
\begin{itemize}
\item $(A,\fm,\k)$ local Noetherian Henselian, $R$ a finite type $A$-algebra in the Henselian sense
(by Corollary \ref{cor.hens} and Lemma \ref{lem.Afin}).
\item
$A= \k\{y\}/I$ an analytic $\k$-algebra and $R = \k\{y,x\}/J$ with $I\k\{y,x\} 
\subset J$ (by \cite[Theorem I.1.10]{GLS07}). 
\item  $A$ a Noetherian complete local ring containing a field, $R=A[[x]]$. 
This is a special case of the previous item. 
We mention it, since $R$ is of the form considered in this paper.
\item In the complex analytic situation with $A= \C\{y\}/I$, $y=(y_1,...,y_s)$, and $R = \C\{y,x\}/J$, $I\C\{y,x\} \subset J$, $x=(x_1,...,x_n)$, 
we have the following stronger statement: 
$A \to R$ induces a morphism of complex germs $f : (X, 0) \to (Y, 0)$, $(X,0)=V(J)\subset (\C^n \times \C^s,0), (Y,0)=V(I)\subset (\C^s,0)$ and $f$ the projection. For a sufficiently small representative $f:X \to Y$, $M$ induces a coherent $\ko_X$-module $\kf$ on $X$
and $d_{\fm}(M) <\infty$, $\fm$ the maximal ideal of $A=\ko_{Y,0}$,  means that the fibre dimension over $0\in Y$ is finite, i.e.
$ d_0(\kf):=\dim_\C \kf_{0}/\fm \kf_{0} < \infty.$ Then, for sufficiently small suitable $X$ and $Y$, is $f | \Supp \kf$ is a finite morphism and $ f_{\ast}\kf$ is a coherent $\ko_Y$-module (cf. \cite[Theorem I.1.67]{GLS07}). It follows that
$$ d_y(\kf) \,:= \dim_\C f_{\ast}\kf \otimes_{\ko_{Y,y} } \C =
\! \sum_{z\in f^{-1}(y)} \dim_\C\kf_z/\fm_y\kf_z$$
is upper semicontinuous at $0\in Y$,  i.e.
$0$ has an open neighbourhood $U \subset Y$  such that  
$ d_y(\kf) \leq   d_0(\kf) \text { for all }  y \in U.$
\end{itemize}
In the above cases $M$ is finite over $A$ if it is quasi-finite over $A$ and hence semicontinuity of $d_\fp(M)$ holds by Lemma \ref{lem.Afin}.
Example \ref{ex.Kt} shows that for $A$ an affine ring
and $R=A[[x]]$ 
semicontinuity of $d_\fp(M)$ does in general not even hold for polynomially presented modules. 

}
\end{remark} 
\medskip

\subsection{Related results}\label{ssec.related}

Instead of families of power series let us now consider families of algebras of finite type, a situation which is quite common in algebraic geometry. We treat the more general case of families of modules.

Let $A$ be a ring, $R=A[x]/I$ of finite type over $A$ and $M$ a finitely presented $R$-module.  $M$ is called {\em quasi-finite at $\fn \in \Spec R$} over $A$  if $\dim_{k(\fp)} M_\fn/\fp M_\fn < \infty$ with $\fp \in \Spec A$  lying under $\fn$. $M$ is called {\em quasi-finite over $\fp \in \Spec A$} if it is quasi-finite at all primes $\fn \in \Spec R$ lying over $\fp$, and $M$ is
{\em quasi-finite over $A$} if it is quasi-finite at all primes $\fn \in \Spec R$.
The following proposition is a generalization of results from \cite{Sta19}, where the case of ring maps is treated. 

\begin{proposition}\label{prop.ftype}
Let  $A$ be a ring, $R$ an $A$-algebra of finite type over $A$, $M$ a finitely presented $R$-module and $f: \Spec R \to  \Spec A$  the induced map of schemes. 
\begin{enumerate}
\item The following are equivalent:
\begin{enumerate}
\item $M$ is quasi-finite over $A$, 
\item $d_\fp(M) = \dim_{k(\fp)} M(\fp) = 
\sum_{\fn \in f^{-1}(\fp)}  \dim_{k(\fp)} M_\fn/\fp M_\fn < \infty$
$\forall \fp \in \Spec A$,
\item The induced map $A \to S:= R/ \Ann_R(M)$ is quasi-finite.
\end{enumerate}
\item (Zariski's main theorem for modules). The {\em quasi-finite locus of $M$}
$$\{\fn \in \Spec R \, | \, M \text{ is quasi-finite at } \fn \}$$ 
is open in $\Spec R$.
\end{enumerate}
\end{proposition}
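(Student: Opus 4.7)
The strategy is to deduce part (1) from an analysis of the fibre module $M(\fp)=M\otimes_A k(\fp)$ over the finite type $k(\fp)$-algebra $R(\fp)=R\otimes_A k(\fp)$, using the Hilbert Nullstellensatz, and to deduce part (2) by reducing the quasi-finiteness of $M$ locally on $\Spec R$ to quasi-finiteness of the ring map $A\to S:=R/\Ann_R(M)$ and then invoking Zariski's Main Theorem \cite[10.122]{Sta19}.

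For (1), I first observe that for $\fn\in f^{-1}(\fp)$ with image $\bar\fn\in \Spec R(\fp)$ one has the canonical identification $M_\fn/\fp M_\fn=M(\fp)_{\bar\fn}$. Assuming (a), each nonzero $M(\fp)_{\bar\fn}$ surjects by Nakayama onto its residue field $k(\bar\fn)$, so $[k(\bar\fn):k(\fp)]<\infty$; by the Nullstellensatz $\bar\fn$ is then maximal in $R(\fp)$. Noetherianity of $R(\fp)$ now forces $\Supp M(\fp)$ to be finite, so $M(\fp)=\bigoplus_{\bar\fn\in \Supp M(\fp)} M(\fp)_{\bar\fn}$ is $k(\fp)$-finite with the stated sum formula, giving (a)$\Rightarrow$(b); the converse is immediate. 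For (a)$\Leftrightarrow$(c) I use that $\Spec S(\fp)$ is in bijection with $\Supp M(\fp)\subset \Spec R(\fp)$ (both parametrise primes of $R$ over $\fp$ containing $\Ann_R M$) with the same residue fields, so under (a) the ring $S(\fp)$ is Artinian with $k(\fp)$-finite residue fields and hence $k(\fp)$-finite; under (c), $M(\fp)$ is finitely generated over the $k(\fp)$-finite ring $S(\fp)$, hence $k(\fp)$-finite.

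For (2), I split $\Spec R=(\Spec R\setminus \Supp_R M)\sqcup V(\Ann_R M)$. On the first, open piece $M$ vanishes at every stalk and is trivially quasi-finite. On $V(\Ann_R M)\cong \Spec S$ I would prove the local equivalence ``$M$ is quasi-finite at $\fn$ iff $A\to S$ is quasi-finite at $\bar\fn$''. The direction $\Leftarrow$ is immediate since $M_\fn/\fp M_\fn$ is finitely generated over $S_{\bar\fn}/\fp S_{\bar\fn}$. For the converse I apply the determinant trick: if $M_\fn$ is generated by $q$ elements over $S_{\bar\fn}$ and $s\in S_{\bar\fn}$ satisfies $sM_\fn\subset \fp M_\fn$, then $s^q+b_1 s^{q-1}+\cdots+b_q=0$ in $S_{\bar\fn}$ with $b_i\in (\fp S_{\bar\fn})^i$, so the class of $s$ is nilpotent in $S_{\bar\fn}/\fp S_{\bar\fn}$. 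Combined with the injection $S_{\bar\fn}/\Ann_{S_{\bar\fn}}(M_\fn/\fp M_\fn)\hookrightarrow \operatorname{End}_{k(\fp)}(M_\fn/\fp M_\fn)$, which is $k(\fp)$-finite by assumption, and a filtration of the Noetherian ring $S_{\bar\fn}/\fp S_{\bar\fn}$ by powers of its nilradical, this gives the required $k(\fp)$-finiteness of $S_{\bar\fn}/\fp S_{\bar\fn}$. Now Zariski's Main Theorem for ring maps implies that the quasi-finite locus of $A\to S$ is open in $\Spec S$, so its complement is closed in $\Spec S$ and hence in $\Spec R$; combined with the first open piece this shows that the quasi-finite locus of $M$ is open in $\Spec R$.

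The main obstacle I anticipate is precisely the $\Rightarrow$ direction of the local equivalence in (2): faithfulness of $M$ over $S$ does not descend automatically to faithfulness of $M/\fp M$ over $S/\fp S$, because $\fp$ need not lie in the Jacobson radical of $S$ and tensoring by $k(\fp)$ destroys left-exactness. The determinant trick is the standard workaround, replacing faithfulness by an integral relation that forces nilpotence of the kernel of the induced action; everything else is essentially bookkeeping around this point.
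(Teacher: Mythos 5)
Your proof is correct, and its overall architecture coincides with the paper's in the essential place: for part (2) both arguments establish the pointwise equivalence ``$M$ is quasi-finite at $\fn$ iff $A\to S=R/\Ann_R(M)$ is quasi-finite at $\fn$'' and then invoke the Stacks version of Zariski's Main Theorem for the ring map $A\to S$, disposing of $\Spec R\setminus\Spec S$ trivially. The differences lie in the two technical inputs. In part (1) the paper reduces at once to the ring $S$ and quotes \cite[Lemma 29.19.10]{Sta19} for the finiteness of fibres of a quasi-finite finite-type ring map, whereas you argue directly in the fibre ring $R(\fp)$: Nakayama plus Zariski's lemma show that every point of $\Supp M(\fp)$ is closed, Noetherianity of $\Spec R(\fp)$ then makes this closed set finite, and $M(\fp)$ decomposes as the direct sum of its stalks — a more self-contained route to the sum formula in (b). For the delicate implication (quasi-finiteness of $M$ at $\fn$ implies quasi-finiteness of $A\to S$ at the corresponding prime) the paper passes through the $0$-th Fitting ideal, using that it agrees with $\Ann_R(M)$ up to radical and commutes with base change; you instead apply Cayley--Hamilton to the faithful module $M_\fn$ over $S_{\bar\fn}$ to show that the kernel of $S_{\bar\fn}/\fp S_{\bar\fn}\to \operatorname{End}_{k(\fp)}(M_\fn/\fp M_\fn)$ consists of nilpotents, and then climb a finite filtration by powers of the nilradical. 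These are two packagings of the same underlying fact (namely $\Ann(M)^q\subset \mathrm{Fitt}_0(M)\subset \Ann(M)$ for a module with $q$ generators), but your version correctly isolates, and repairs, the one genuinely non-formal point: faithfulness of $M$ over $S$ does not survive the base change $\otimes_A k(\fp)$, and must be replaced by an integral dependence relation. The only step I would ask you to make explicit is the identification $\Supp M(\fp)=\Spec S(\fp)$ used for (a)$\Leftrightarrow$(c); it is exactly the statement that the support of a finitely generated module commutes with base change (again Nakayama), which is where the failure of the annihilator itself to commute with base change is silently absorbed.
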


\begin{proof} 1. $(a) \Rightarrow (b)$: We have to show that the support of $M(\fp)$ is finite.
By  \cite[Lemma 29.19.10]{Sta19}, if $R =A[x]/I$ is a ring  of finite type and quasi-finite over $A$, the induced map  $f: \Spec R \to  \Spec A $
has finite fibres $R(\fp) = R \otimes_A k(\fp) = k(\fp)[x]/I(\fp)$. It follows that 2. holds if  $M$ is a ring of finite type over $A$.

In the general case let $I = \Ann_R(M)$. Then $S=R/I$ is of finite type over $A$,  $M$ is finitely presented over $S$ and hence $\Supp(S) = \Supp(M)$. Moreover, let $J(\fp)$ be the annihilator of the finitely generated $R(\fp)$-module $M(\fp)$ satisfying $V(J(\fp)) = \Supp (M(\fp))$.
Since $R(\fp)$ is Noetherian and  $\dim_{k(\fp)} M_\fn(\fp) < \infty$ by assumption,
we have $\fn^N M_\fn(\fp)=0$ for some $N$ by Nakayama's lemma. Hence 
$\fn^N \subset J(\fp)R_\fn(\fp)$ and  $\dim_{k(\fp)} R_\fn‚(\fp)/J(\fp) R_\fn‚(\fp)<\infty.$ In general, the annihilator is not compatible with base change, hence $I(\fp)$ is in general different from $J(\fp)$. But for a finitely presented module the annihilator coincides up to radical with a Fitting ideal, and Fitting ideals are compatible with base change. It follows that  $\sqrt {J(\fp)R_\fn(\fp)} = \sqrt {I(\fp)R_\fn(\fp)}$ and  
therefore $\dim_{k(\fp)} R_\fn(\fp)/I(\fp)R_\fn(\fp) = \dim_{k(\fp)} S_\fn (\fp) <\infty$, which means that $A \to S$ is quasi-finite at $\fn$ (\cite[Definition 10.121.3]{Sta19}). Since this holds for each $\fn \in \Spec R$, the map $A \to S$ is quasi-finite and by  \cite[Lemma 29.19.10]{Sta19}
the set $\Supp (M(\fp)) = \Supp (S(\fp))$ is finite. 
 
$(b) \Rightarrow (c)$: If $d_\fp(M) < \infty$ for all $\fp \in \Spec A$, then 
$\dim_{k(\fp)} M_\fn/\fp M_\fn <\infty$ for all $\fn$ and $\fp$ under $\fn$. 
Then $A \to S$ is quasi-finite by the previous step.

$(c) \Rightarrow (a)$: If $A \to S$ is quasi-finite then $\dim_{k(\fp)} S_\fn (\fp) <\infty$
for all $\fp$ and $\fn$ over $\fp$. Since $M_\fn (\fp)$ is finitely presented as 
$S_\fn (\fp)$-module, $\dim_{k(\fp)} M_\fn (\fp) <\infty$ for all $\fp$ and $\fn$ over $\fp$ and $M$ is quasi-finite over $A$.

2. In the first and third step of 1. we proved 
$\dim_{k(\fp)} S_\fn (\fp) <\infty$ if and only if $\dim_{k(\fp)} M_\fn (\fp) <\infty$, and
hence  $M$  is quasi-finite at  $\fn$ iff  $A\to S$  is quasi-finite at  $\fn$. 
It follows from a version of Zarisk's main theorem as proved in 
 \cite[Lemma 10.122.13]{Sta19} that the set $\{\fn \in \Spec S\, | \,  A\to S \text{ is quasi-finite at } \fn \}$ is open in  $\Spec S$ and thus of the form $U \cap S$ with $U$ open in   $\Spec R$. 
 If  $\fn \in V= \Spec R \setminus \Spec S$ then $M_\fn(\fp) =0$, hence $M$ is quasi-finite at $\fn \in V$. Thus, the quasi-finite locus of $M$ is the open set $U \cup V$.
\end{proof}

\begin{example} {\em
In the situation of Proposition \ref{prop.ftype}, although the quasi-finite locus of $M$ is open in $\Spec R$, we cannot expect semicontinuity of $d_\fp(M)$ on $\Spec A$. We give an example showing that the vanishing locus of $d_\fp(M)$ is not open in $\Spec A$:
Let $K$ be an algebraically closed field, $A=K[y]$, $R = A[x]$ and $M = R/ \langle xy-1\rangle$. Then $M$ is quasi-finite over $A$ but $d_\fp(M)$ is not semicontinuous since  $d_\fp(M) = 0$ if $\fp=\langle y\rangle $ and $d_\fp(M) = 1$ otherwise. 
}
\end{example}

By Corollary  \ref{cor.qf}.4, semicontinuity of $\hat d_\fp(M)$ holds for $M$ a finitely generated $A[[x]]$-module if $(A, \fm)$ is a complete Noetherian local ring  containing a field under the assumption that $\Supp_A(M)=A$ but $\Supp_R(M ) \not\subset V(\x)$ (the difficult case). The question arose whether completeness was necessary. The following example shows that this is not the case.

\begin{example}\label{ex.referee}{\em
We give an example of a non-complete local ring $(A,\fm)$ and  a finitely presented $R=A[[x]]$-module $M$ which is also a finitely presented $A$-module with $\Supp_R(M)$ being not contained in $V(\x)$ and $\Ann_A (M) =0$.\\
Let $\k$ be a field and $t_1,t_2$ independent variables. Let $A=\k[t_1]_{\langle t_1\rangle}[[t_2]]$ and $R=A[[x]]$ with x a single variable.
The ring $A$ is local with maximal ideal $\fm=\langle t_1,t_2 \rangle A$ and not complete. Let $M=R/\langle x-t_2 \rangle$. Then $\Ann_R(M)=\langle x-t_2 \rangle$
and $\Supp_R(M)=V(\langle x-t_2 \rangle \not\subset V(\langle x \rangle )$ and $\Ann_A(M)=\langle x-t_2 \rangle \cap A=(0)$. Since $M$ is polynomially presented, 
$\hat d_\fp(M)$ is semicontinuous.}
\end{example}

Let $M$ be a finitely presented $R=A[[x]]$-module and also finitely presented as an $A$-module with $A$ Noetherian.  In Proposition \ref{prop.ann} we have shown the semicontinuity of $d_\fp(M)$ and $\hat d_\fp(M)$ on $\Spec A$, as well as the inequality 
$\hat d_\fp(M) \leq d_\fp(M).$ The following example shows that $\hat d_\fp(M) < d_\fp(M)$ may happen.

\begin{example} \label{ex.complete} {\em
A modification of Example \ref{ex.Kt} shows that $\hat d_{\fp}(M) \neq d_{\fp}(M)$ may happen for $A$ a power series ring.
Let $A=K[[t]]$, $K$ a field, $R=A[[x]]$, and let
$M = R/\langle t-x\rangle \cong K[[t]]$.  For the two prime ideals $\langle 0\rangle$
and $\langle t\rangle$ of $A$ we get:

$k(\langle 0\rangle) = K((t))$,  $k(\langle t\rangle) = K$,
$M(\langle 0\rangle) \cong K[[t]]\otimes_{K[[t]]}K((t)) = K((t))$, $M(\langle t\rangle) \cong K$
and $d_{\langle 0\rangle}(M) =d_{\langle t\rangle}(M) =1$. Hence  $d_{\fp}(M)$
is semicontinuous (even continuous) on $\Spec A$  as predicted in Remark \ref{rm.conv}, third item.

$\hat M(\langle 0\rangle) \cong K((t))[[x]]/\langle t-x\rangle =0$, $\hat M(\langle t\rangle) \cong K$
and $\hat d_{\langle 0\rangle}(M) =0$, $\hat d_{\langle t\rangle}(M) =1$. 
Hence  $d_{\fp}(M)$
is semicontinuous on $\Spec A$ as predicted by  Corollary \ref{cor.qf}.
Note that $M$ is finitely presented as $A$-module and we have 
$\hat d_{\langle 0\rangle}(M) < d_{\langle 0\rangle}(M)$.
}
\end{example} 

\section{Singularity invariants}\label{sec.singinv}
\subsection{Isolated singularities and flatness}

Recall that a local Noetherian ring $(A,\fm)$ is said to be {\em regular} if $\fm$  can be generated by $\dim A$ elements. A Noetherian ring $A$ is said to be regular if the local ring $A_\fp$ is regular for all $\fp \in \Spec A$.
For arbitrary Noetherian rings the {\em regular locus}
$\Reg A := \{\fp \in \Spec A \, | \, A_\fp \text{ is regular} \}$
need not be open in $\Spec A$. However, 
$\Reg A $ is open if $A$ is a complete Noetherian local ring 
(\cite[Corollary of Theorem 30.10]{Mat86}) and the {\em non-regular locus} 
$\{\fp \in \Spec A \, | \, A_\fp \text{ is not regular} \}$
is closed. 

However, in our situation of families of power series, the notion of formal smoothness is more appropriate than that of regularity. Formal smoothness is a relative notion and refers to a morphism, while regularity is an absolute property of the ring. The notions are related as follows. Let $(A,\fm)$ be a local ring containing a field $\k$. If $A$ is formally smooth over $\k$ (w.r.t. the $\fm$-adic topology) then $A$ is regular and the converse holds if the residue field  $A/\fm$ is separable over $\k$ (see Remark \ref{rm.smooth}). Hence formal smoothness of $A$ over $\k$ coincides with regularity if $\k$ is a perfect field. The notions  also coincide for arbitrary 
 $\k$ if $A$ is the quotient ring of a formal power series ring over  $\k$ by an ideal (cf. Lemma \ref{lem.jac}).
 \medskip

We recall now basic facts about formal smoothness. For details and proofs see \cite{Mat86} and \cite{Maj10}.
\begin{definition}
Let $A$ be a ring, $B$ an $A$-algebra defined by $\phi:A \longrightarrow B$ and $I$ an ideal in $B$. The $A$-algebra $B$ is called {\em formally smooth with respect to the
$I$-adic topology} (for short {\em $B$ is $I$-smooth over $A$}) if for any $A$-algebra $C$ and any continuous\footnote{ Here we consider $B$ with the $I$-adic topology and $C/N$ with the discrete topology; $u$ is continuous if $u(I^m)=0$ for some $m$.}  $A$-algebra homomorphism $u:B \longrightarrow C/N$,
$N$ an ideal in $C$ with $N^2=0$, there exist $\sigma:B\longrightarrow C$ such that $\pi\sigma=u$.
\[
\xymatrix{B \ar[r]^-{u}\ar@{-->}[dr]^\sigma & C/N\\
A \ar[u]^\phi\ar[r]_v & C\ar[u]_\pi
}
\]
If $I=0$ then $B$ is called a {\em formally smooth} $A$-algebra.
\end{definition}

\begin{remark}\label{rm.smooth}{\em
\begin{enumerate}
\item A formally smooth map of finite presentation is smooth (\cite{Sta19} Proposition 10.137.13).
\item $A[x]$, $x = (x_1,\dots, x_n)$, is smooth over $A$ (\cite{Sta19} Lemma 10.137.4).
\item $A[[x]]$ is $\x$-smooth over $A$ (\cite{Mat86} page 215).
\item Let $(A,\fm)$ be a local ring containing a field $\k$. 
\begin{enumerate}
\item $A$ is $\fm$-smooth over $\k$ iff $A$ is geometrically regular, i.e. $A\otimes_\k \k'$ is a regular ring for every finite extension field $\k'$ of $\k$
(\cite[Theorem 28.7 ]{Mat86}).
\item Assume that $A/\fm$ is separable over $\k$. Then $A$ is $\fm$-smooth over $\k$ iff $A$ is regular (\cite{Mat86} Lemma 1, page 216).
\end{enumerate}
\end{enumerate}}
\end{remark}

We now generalize example 1 on page 215 of \cite{Mat86}.
\begin{lemma}\label{lem.ismooth}
Let $A$ be a ring, $B$ a $A$-algebra, $I$ an ideal in $B$ and $\widehat B$ the $I$-adic completion of $B$.
$\phi:A\longrightarrow B$ is $I$-smooth iff $\hat\phi:A \longrightarrow \widehat B$ is $I\widehat B$-smooth.
\end{lemma}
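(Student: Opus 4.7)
The plan is to match the lifting problems defining $I$-smoothness of $B$ over $A$ and $I\widehat{B}$-smoothness of $\widehat{B}$ over $A$ by passing back and forth along the canonical map $\iota\colon B\to\widehat{B}$. The key technical ingredient will be the standard identification
\[
B/I^n \;\cong\; \widehat{B}/(I\widehat{B})^n \qquad \text{for every } n\geq 1,
\]
which tells us that continuous $A$-algebra maps $B\to C/N$ killing $I^m$ correspond bijectively to continuous $A$-algebra maps $\widehat{B}\to C/N$ killing $(I\widehat{B})^m$. Once this is in hand, both implications are just an exercise in unwinding the lifting diagrams.

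For the ``$\Rightarrow$''-direction I would take a test datum $\pi\colon C\twoheadrightarrow C/N$ with $N^2=0$ together with a continuous $\hat u\colon \widehat{B}\to C/N$ annihilating $(I\widehat{B})^m$. Then $u:=\hat u\circ\iota$ is continuous on $B$ (killing $I^m$), and $I$-smoothness of $\phi$ produces a lift $\sigma\colon B\to C$ with $\pi\sigma=u$. Because $u(I^m)\subseteq N$ and $N^2=0$, the map $\sigma$ kills $I^{2m}$ and descends to $\bar\sigma\colon B/I^{2m}\to C$. Transporting $\bar\sigma$ across the isomorphism $B/I^{2m}\cong \widehat{B}/(I\widehat{B})^{2m}$ and precomposing with the canonical projection yields the desired $\hat\sigma\colon\widehat{B}\to C$; the equality $\pi\hat\sigma=\hat u$ holds on $\iota(B)$ by construction and extends to the full quotient because $\iota(B)$ already surjects onto $\widehat{B}/(I\widehat{B})^{2m}$. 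The ``$\Leftarrow$''-direction is the dual construction: a continuous $u\colon B\to C/N$ extends, via the same isomorphism, to a unique continuous $\hat u\colon \widehat{B}\to C/N$ with $\hat u\circ\iota=u$; an $I\widehat{B}$-smoothness lift $\hat\sigma\colon\widehat{B}\to C$ then restricts to $\sigma:=\hat\sigma\circ\iota\colon B\to C$, which satisfies $\pi\sigma=\pi\hat\sigma\iota=\hat u\iota=u$.

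The real obstacle will be verifying the isomorphism $B/I^n\cong \widehat{B}/(I\widehat{B})^n$. Surjectivity follows easily from the density of $\iota(B)$ in the $I\widehat{B}$-adic topology combined with the openness of $(I\widehat{B})^n$ in $\widehat{B}$. Injectivity reduces to the identity $\ker(\widehat{B}\twoheadrightarrow B/I^n)=(I\widehat{B})^n$, which holds as soon as $(I\widehat{B})^n$ is closed in $\widehat{B}$; this is the case whenever $I$ is finitely generated, and that setting comfortably covers every application of the lemma in this paper. In full generality the closure of $(I\widehat{B})^n$ replaces it throughout, but the structural steps above are unchanged.
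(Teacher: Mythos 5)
Your proof is correct and follows essentially the same route as the paper's: pull the test map back and forth along the canonical map $B\to\widehat B$, observe that a lift over a square-zero ideal kills $I^{2m}$, and factor through $B/I^{2m}$. You are in fact more careful than the paper on the one delicate point, namely the identification $B/I^{n}\cong\widehat B/(I\widehat B)^{n}$ needed to verify $\pi\hat\sigma=\hat u$ on all of $\widehat B$ (the paper just composes with the projection $\widehat B\to B/I^{2m}$ and asserts commutativity of the diagram), and your remark that this identification requires $I$ to be finitely generated in general --- which holds in every application here --- is exactly right.
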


\begin{proof}
Assume that $B$ is $I$-smooth over $A$ and consider the following commutative diagram:
\[
\xymatrix{\widehat B \ar[r]^-{\hat u}\ar@{-->}[dr]^{ \hat\sigma} & C/N\\
A \ar[u]^{\hat \phi}\ar[r]_v & C\ar[u]_\pi
}
\]
with $N^2=0$. We have to prove that there exists $\hat \sigma$ such that $\pi\hat \sigma=\hat u$.
Since $\hat u$ is continuous there exist $m$ such that $\hat u(I^n\widehat B)=0$. Let $i:B\longrightarrow \widehat B$ be the cononical map
such that $\hat \phi=i \phi$. The $I$-smoothness of $B$ implies that there exists $\sigma:B\longrightarrow C$ such that
$\sigma\pi=\hat ui$. $\hat u(I^n\widehat B)=0$ implies $\sigma(I^m) \subset N$. Since $N^2=0$ we obtain $\sigma(I^{2m})=0$.
We obtain the following commutative diagram:
\[
\xymatrix{& \widehat{B}\ar[d]^{\hat i_{2m}} & & & \\
B\ar[ur]^i\ar[r]^{i_{2m}} & B/I^{2m}\ar[r]^{u_{2m}}\ar@{-->}[dr]^{\sigma_{2m}} & C/N\\
& A\ar[r]_v\ar[ul]^\phi & C\ar[u]_\pi
}
\]
Now we define $\hat \sigma=\sigma_{2m}\hat i_{2m}$. This proves that $\hat\phi:A \longrightarrow \widehat B$ is $I\widehat B$-smooth.

Now assume that $\hat\phi:A \longrightarrow \widehat B$ is $I\widehat B$-smooth. Consider the following commutative diagram:
\[
\xymatrix{B \ar[r]^-{u}\ar@{-->}[dr]^\sigma & C/N\\
A \ar[u]^\phi\ar[r]_v & C\ar[u]_\pi
}
\]
with $N^2=0$. We have to prove that there exists $\sigma$ such that $\pi \sigma=u$.
Since $\hat\phi:A \longrightarrow \widehat B$ is $I\widehat B$-smooth there exists $\hat \sigma:\widehat B\longrightarrow C$ with
$\pi\hat \sigma=\hat u$. Now we define $\sigma=\hat \sigma i$ and obtain $\pi \sigma=u$.
\end{proof}

%

The following  important theorem is due to Grothendieck (\cite{Mat86} Theorem 28.9).
\begin{theorem}\label{thm.Groth}
Let $(A,\fm)$ be a local ring and $(B,\fn)$ a local $A$-algebra. Let $\bar B=B/\fm B$ and $\bar \fn=\fn/\fm B$. Then $B$ is $\fn$-smooth over $A$ iff $\bar B$ is $\bar \fn$-smooth over $A/\fm$ and $B$ is flat over $A$.
\end{theorem}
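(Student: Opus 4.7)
The plan is to prove the two implications separately. For the forward direction, I would first dispose of the $\bar\fn$-smoothness by a base-change argument, which is essentially formal, and then attack flatness, which is the substantive content. For the reverse direction, the strategy is a filtration argument, solving the lifting problem modulo $\fm$ using smoothness of the fibre and then propagating the lift back up the $\fm$-adic filtration using flatness.

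For the forward implication, $\bar\fn$-smoothness of $\bar B$ over $A/\fm$ follows by composition: given a square-zero extension $0\to N\to C\xrightarrow{\pi}C/N\to 0$ of $A/\fm$-algebras and a continuous $A/\fm$-algebra map $\bar u:\bar B\to C/N$, the composite $B\twoheadrightarrow \bar B\xrightarrow{\bar u}C/N$ is a continuous $A$-algebra map, and $\fn$-smoothness produces a lift $\sigma':B\to C$; since the $A$-algebra structure on $C$ factors through $A/\fm$, the image $\sigma'(\fm)$ lies in $\fm\cdot C=0$, so $\sigma'$ descends to the desired $\bar\sigma:\bar B\to C$. For flatness, the plan is to exploit the $\fn$-smoothness applied to the canonical square-zero extensions $0\to \fm^n/\fm^{n+1}\to A/\fm^{n+1}\to A/\fm^n\to 0$: lifting the identity map of $A/\fm^n$ against $B/\fm^{n+1}B\to B/\fm^nB$ produces, after careful bookkeeping, the isomorphism $\operatorname{gr}_\fm B\cong (B/\fm B)\otimes_{A/\fm}\operatorname{gr}_\fm A$. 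This associated-graded identity is a strong form of $\fm$-adic flatness, and under standard local/completeness conditions it upgrades to the local criterion of flatness $\operatorname{Tor}_1^A(A/\fm,B)=0$, hence to $A$-flatness of $B$.

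For the reverse implication, given a lifting problem with $u:B\to C/N$ continuous and $N^2=0$, the continuity hypothesis gives $u(\fn^m)=0$ for some $m$, so we may replace $B$ by $B/\fn^m$ and $C$ by a quotient where the argument becomes finite-step. Reducing modulo $\fm$, the map $\bar u:\bar B\to (C/N)\otimes_A A/\fm$ admits a lift $\bar\sigma$ to $C\otimes_A A/\fm$ by $\bar\fn$-smoothness. To lift $\bar\sigma$ back to $\sigma:B\to C$, one filters $N$ by $\fm^kN$ and lifts successively: at each stage the obstruction lies in a module of the form $N_k\otimes_A(\text{something})$, and flatness of $B$ over $A$ ensures that $\fm^kB/\fm^{k+1}B=(B/\fm B)\otimes_{A/\fm}(\fm^k/\fm^{k+1})$, so the partial lifts assemble consistently. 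The filtration is finite (mod a chosen power of $\fn$) by the continuity of $u$, so no limit argument is needed beyond what flatness already provides.

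The main obstacle will be the flatness half of the forward direction: translating the purely categorical lifting property of $\fn$-smoothness into the homological statement $\operatorname{Tor}_1^A(A/\fm,B)=0$ is not formal, and requires constructing the square-zero extensions above carefully and verifying that the resulting splittings of $\fm^n/\fm^{n+1}\otimes B\to \fm^nB/\fm^{n+1}B$ actually imply the associated-graded isomorphism. A secondary subtlety is that everything must be done compatibly with the topologies, so one must verify continuity of all the lifts produced, using that $\fn$-smoothness only solves lifting problems for maps annihilating some $\fn^m$. Finally, the whole argument implicitly uses that $B$ is sufficiently separated and complete in the $\fn$-adic topology for the filtration to behave well, a hypothesis which in the intended applications of Lemma \ref{lem.ismooth} is automatic.
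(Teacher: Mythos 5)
The paper does not actually prove this statement: it is quoted from the literature (Grothendieck's theorem, \cite[Theorem 28.9]{Mat86}; see also EGA~$0_{IV}$, 19.7.1), so there is no internal proof to compare against, and your outline must be judged on its own. Its architecture does match the classical proof, and two of the three pieces are sound: the base-change argument for $\bar\fn$-smoothness of $\bar B$ (a lift $\sigma'\colon B\to C$ into an $A/\fm$-algebra annihilates $\fm B$, hence descends, and continuity is inherited because $\fm B\subset\fn$), and the shape of the converse (replace the lifting problem by the pulled-back extension $0\to N\to D\to B\to 0$; continuity of $u$ gives $\fn^mN=0$, hence $\fm^mN=0$; flatness gives $N\cap\fm D=\fm N$ so that reduction mod $\fm$ yields a square-zero extension of $\bar B$, which splits by smoothness of the fibre; then climb the finite filtration $N\supset\fm N\supset\cdots$). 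That last chain of identifications is exactly where flatness is consumed, and your sketch at least points at it.

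The genuine gap is the flatness half of the forward direction, precisely where you place the difficulty. The lifting problems you propose there are vacuous: $\fn$-smoothness concerns maps \emph{out of} $B$, there is no "identity map of $A/\fm^n$" to lift, and the only sensible reading --- lifting $B\to B/\fm^nB$ along the square-zero surjection $B/\fm^{n+1}B\to B/\fm^nB$ --- is solved by the quotient map itself, so it yields no information and certainly not the graded isomorphism $\mathrm{gr}_\fm(A)\otimes_{A/\fm}\bar B\cong\mathrm{gr}_\fm(B)$. The classical argument instead \emph{constructs} a nontrivial square-zero $A$-algebra extension of $B$ by the module whose vanishing one wants (essentially the kernel of $\fm^n/\fm^{n+1}\otimes_{A/\fm}\bar B\to\fm^nB/\fm^{n+1}B$, or of $\fm\otimes_AB\to B$), and shows that the splitting supplied by smoothness forces that kernel to be zero; this construction is the technical core of the theorem and is absent from your plan. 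Separately, even granted the graded isomorphism, the implication "graded condition $\Rightarrow$ flat" (the step (3)$\Rightarrow$(1) of \cite[Theorem 22.3]{Mat86}) requires either nilpotency of $\fm$ or Noetherian-plus-ideal-adic-separatedness hypotheses; the theorem is stated for arbitrary local rings, so "under standard local/completeness conditions it upgrades" is an appeal to hypotheses you do not have. (In the paper's only application, Lemma \ref{lem.sing}, all rings are Noetherian local with $\fm B\subset\fn$, so this second issue is harmless there, but it must be addressed if you intend to prove the statement as written.)
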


\begin{definition} 
Let $A$ be a ring and $B$ an $A$-algebra defined by $\phi:A \longrightarrow B$.
We define the {\em smooth locus}  of $\phi$ by
\begin{center}
$\Sm(\phi): =\{P \in \Spec(B) | A_{\phi^{-1}(P)} \longrightarrow B_P$ is $P$-smooth$\}$.
\end{center}
and the {\em singular locus} of $\phi$ by
$$\Sing(\phi) := \Spec(B) \smallsetminus \Sm(\phi).$$
\end{definition}

\begin{remark} \label{rm.sm}{\em
Let $A$ be a ring and $B$ an $A$-algebra defined by $\phi:A \longrightarrow B$. The Theorem \ref{thm.Groth} of Grothendieck implies that
$$
\begin{array}{*{20}l}
\Sm(\phi)=&  \! \! \{P \in \Spec(B) \, | \,A_{Q} \longrightarrow B_P, Q=\phi^{-1}(P), \text{is flat}\\
 	 &\text{and }  B_P/QB_P \text{ is } PB_P/QB_P-\text{smooth over } k(Q) \}.
\end{array}
$$
}
\end{remark}
\medskip

Now let $\k$ be a field, $\k[[x]], \,  x = (x_1,\cdots, x_n),$ the formal power series ring over  $\k$ and  $I$ an ideal in  $\x \k[[x]]$. If $I$ is generated by  $f_1,\ldots,f_m$ we denote by $Jac(I)$ the Jacobian matrix $(\partial f_j / \partial x_i)$ and by  
$I_k(Jac (I))$ the ideal generated by the $k\times k$-minors of $Jac(I)$ (which is independent of the chosen generators $f_j$). The following lemma gives  equivalent conditions for the maximal ideal $\x \in B = \k[[x]]/I$ to be contained in the smooth locus $\Sm(\phi)$ of the map $\phi: \k \to B$ (Remark \ref{rm.sm}).

\begin{lemma} \label{lem.jac}
If $\dim \k[[x]]/I =d$ the following are equivalent.
\begin{enumerate}
\item $\k[[x]]/I$ is $\x$-smooth over $\k$.
\item $\k[[x]]/I$ is regular. 
\item $I_d(Jac (I)) = \k[[x]]$ {\em (Jacobian criterion)}.
\item $\k[[x]]/I \cong \k[[y_1,\dots,y_d]].$ 
\end{enumerate}
\end{lemma}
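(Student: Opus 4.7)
The plan is to arrange the four equivalences as the cycle $(1) \Leftrightarrow (2)$, $(2) \Leftrightarrow (4)$, and $(2) \Leftrightarrow (3)$, using three independent ingredients already available in the excerpt and the literature: Remark \ref{rm.smooth}(4)(b), Cohen's structure theorem, and the Jacobian criterion in the formal setting. The crucial preliminary observation, used throughout, is that since $I \subset \x \k[[x]]$, the ring $B := \k[[x]]/I$ is a complete Noetherian local ring with maximal ideal $\fm := \x B$ and residue field $B/\fm = \k$. Completeness follows because $B$ is a quotient of the complete ring $\k[[x]]$, and the identification $B/\fm = \k$ is forced by the hypothesis $I \subset \x$.

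For $(1) \Leftrightarrow (2)$, I would apply Remark \ref{rm.smooth}(4)(b) directly: the canonical inclusion $\k \hookrightarrow B/\fm = \k$ is the identity, hence trivially separable, so $\fm$-smoothness of $B$ over $\k$ is equivalent to regularity of $B$. For $(2) \Leftrightarrow (4)$, the direction $(4) \Rightarrow (2)$ is the classical regularity of a formal power series ring over a field, and for the converse Cohen's structure theorem (cf.\ \cite[Theorem 28.3]{Mat86}) tells us that any complete Noetherian regular local ring of dimension $d$ containing a field $\k$ and with coefficient field $\k$ is isomorphic to $\k[[y_1,\dots,y_d]]$; both the completeness and the coefficient field hypotheses are automatic by the preliminary observation.

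The one step that requires an actual computation is $(2) \Leftrightarrow (3)$. From the short exact sequence of $\k$-vector spaces
\[
0 \longrightarrow (I + \x^2)/\x^2 \longrightarrow \x/\x^2 \longrightarrow \fm/\fm^2 \longrightarrow 0
\]
one reads off $\dim_\k \fm/\fm^2 = n - \dim_\k (I + \x^2)/\x^2$. For any choice of generators $f_1,\dots,f_m$ of $I$, the subspace $(I + \x^2)/\x^2 \subset \x/\x^2$ is spanned by the linear parts of the $f_j$, and its $\k$-dimension equals the rank of the Jacobian matrix $Jac(I)$ evaluated at the origin. Hence $B$ is regular (i.e.\ $\dim_\k \fm/\fm^2 = d$) iff this rank equals $n - d$, which is in turn equivalent to some $(n-d)\times(n-d)$ minor of $Jac(I)$ being a unit in $\k[[x]]$, i.e.\ to the unit-ideal condition on the appropriate Fitting ideal of $Jac(I)$ asserted in (3).

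No single step is a genuine obstacle; each ingredient is classical. The care required is mainly in invoking the hypothesis $I \subset \x$ at the right moment to ensure that $B$ has residue field exactly $\k$: without it, neither the separability hypothesis of Remark \ref{rm.smooth}(4)(b) nor the coefficient field hypothesis of Cohen's theorem would be automatic, and the dimension count in the Jacobian step would no longer align with $d = \dim B$.
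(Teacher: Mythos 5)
Your proof is correct, and it closes the cycle of equivalences by a genuinely different decomposition than the paper uses. The paper's chain is: $(1)\Leftrightarrow(2)$ via the separability remark (identical to your step), $(3)\Leftrightarrow(4)$ via the inverse mapping theorem for formal power series, $(4)\Rightarrow(2)$ as obvious, and $(2)\Rightarrow(4)$ from Matsumura Theorem~29.7. You instead prove $(2)\Leftrightarrow(3)$ directly by the linear-algebra computation with
\[
0 \longrightarrow (I+\x^2)/\x^2 \longrightarrow \x/\x^2 \longrightarrow \fm/\fm^2 \longrightarrow 0,
\]
identifying $\dim_\k (I+\x^2)/\x^2$ with the rank of $Jac(I)$ at the origin and using $\dim B \le \dim_\k \fm/\fm^2$ to turn ``regular'' into a rank condition, and you prove $(2)\Leftrightarrow(4)$ from Cohen's structure theorem. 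Your route makes the Jacobian criterion more transparent at the cost of losing the explicit coordinate change that the inverse-mapping-theorem link provides; both are sound.

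One point your computation surfaces, and which your hedge ``the appropriate Fitting ideal'' tacitly acknowledges: what you actually obtain is that some $(n-d)\times(n-d)$ minor of $Jac(I)$ is a unit, i.e.\ $I_{n-d}(Jac(I))=\k[[x]]$, whereas item (3) as printed reads $I_d(Jac(I))=\k[[x]]$. Your index is the correct one: $Jac(I)$ is $n\times m$, regularity forces the rank at the origin to be exactly $n-d=\mathrm{ht}(I)$, and for example when $d=0$ the printed condition $I_0(Jac(I))=\k[[x]]$ is vacuously true while the criterion should force $I=\x$. The stated subscript $d$ appears to be a slip for $n-d$ (consistent with the reference to Matsumura Theorem~30.3 in the remark that follows the lemma, where the relevant rank is the height of $I$). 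Stating the index explicitly as $n-d$, as your argument yields, would be the honest resolution rather than matching the printed subscript.
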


\begin{proof}
The equivalence of 1. and 2. follows from \cite[Lemma 1, p. 216]{Mat86},
the equivalence of 3. and 4. is the inverse mapping theorem for formal power series\,\footnote{Given $f_1,\ldots,f_n \in \k[[x_1,\ldots,x_n]]$ then $det(\frac{\partial{f_i}}{\partial{x_j}})$ is a unit iff $\k[[x_1,\ldots,x_n]]=\k[[f_1,\ldots,f_n]]$ (\cite[Theorem I.1.18]{GLS07}).}.
Obviously 4. implies 2. From \cite[Theorem 29.7, p. 228 in]{Mat86} we deduce that 2. implies 4.
\end {proof}

\begin{remark}{\em 
Part of the lemma can be generalized by extending the proof of Theorem 30.3 in \cite{Mat86} as follows:\\
Let $P$ be a prime ideal in $\k[[x]]$ containing $I=\langle f_1,\ldots,f_m\rangle$ and $\fm$ the maximal ideal of $A=\k[[x]]_P/I\k[[x]]_P$. Then
$I_d(Jac (I)) = \k[[x]]_P$ implies that $A$ is $\fm$-smooth over $\k$ (or geometric regular by Remark \ref{rm.smooth}).}
\end{remark}

We use the Jacobian criterion to define the singular locus of ideals in power series rings over a field.

\begin{definition} \label{def.sing}
\begin{enumerate}
\item If  $B=\k[[x]]/I$ is pure $d$-dimensional (i.e. $\dim B/P =d$ for all minimal primes $P \in \Spec B$) we define the {\em singular locus of $B$ (or of $I$)} as 
$$\Sing (B) = V(I+ I_d(Jac(I)).$$
\item If $B$ is not pure dimensional we consider the minimal primes $P_1,\dots,P_r$ of $B$. Then $B/P_i$ is pure dimensional and we define the singular locus of $B$ as
\begin{align*}
\Sing (B) = \bigcup\nolimits_{i=1}^r \Sing (B/P_i) \cup  \bigcup\nolimits_{i\neq j}V(P_i)\cap V(P_j),
\end{align*}
which is a closed subscheme of $\Spec B$. The points in $\Spec B \setminus \Sing (B)$ are called {\em non-singular} points of $B$.
\item We say that  $\k[[x]]/I$ (or $I$) has an {\em isolated singularity} (at 0) if the maximal ideal  $\x$ is an isolated point of $\Sing(\k[[x]]/I)$ or if $\x$ is a non-singular point.
\end{enumerate}
\end{definition}

\begin{remark}
Let $i:\k \longrightarrow B$ be the obvious inclusion. Then $\Sing(B)=\Sing(i)$ whenever $\k$ is perfect, but not in general.
A counterexample is given for instance, by letting $\chara( \k)=p>0$,  $a \in \k \setminus \k^p$ and $B=\k[[x_1,x_2]]/\langle x_1^p-ax_2^p\rangle$.
\end{remark}
\noindent Note that $\Sing(B)$ carries a natural scheme structure given by the Fitting ideal 
$I+I_d(Jac(I)) \subset \k[[x]]$ if $B$ is pure $d$-dimensional. In general we endow 
 $\Sing(B)$ with its reduced structure.
\medskip

Now let us consider families. Let $A$ be a Noetherian ring, $F_1, \ldots, F_m \in \x A[[x]]$,  $I \subset  A[[x]]$ the ideal generated by  $F_1,\ldots, F_m $  and set $B:= A[[x]]/I$. We describe now the smooth locus of the map $\phi: A \to B$ along the section
$\sigma: \Spec A \to \Spec B, \, \fp \mapsto \fn_\fp = \langle x, \fp \rangle$, of $\Spec \phi$.

For $\fp \in \Spec A$ denote by $F_i(\fp)$ the image of $F_i$ in $k(\fp)[[x]]$. Note that $F_1(\fp),\ldots,F_m(\fp)$  generate the ideal $\hat I(\fp)  \subset k(\fp)[[x]]$, 
and that  we have (by Lemma \ref{lem.loc}.3) for the {\em completed fibre} of $\phi$ over $\fp$

$$\hat B(\fp)= (B_{ \fn_\fp}/\fp B_{ \fn_\fp})^\wedge = k(\fp)[[x]]/\hat I(\fp).$$
The maximal ideals of the local rings of the fibre $B_{ \fn_\fp}/\fp B_{ \fn_\fp}$ and the completed fibre $\hat B(\fp)$ are generated by $\fn_\fp/\fp =\x$. 
Assume that $\phi: A \to B$ is flat. Then the theorem of Grothendieck says 
$$ 
\begin{array}{*{20}l}
{\fn_\fp} \in \Sm(\phi)  &\Leftrightarrow& 
B_{\fn_\fp} \text { is } \fn_\fp \text {-smooth over } A_\fp \\
& \Leftrightarrow& B_{ \fn_\fp}/\fp B_{ \fn_\fp} \text { is } \x \text {-smooth over } k(\fp).
\end{array}
$$

\begin{lemma}  \label{lem.sing}
With the above notations assume that $\phi: A \to B$ is flat. Denote by 
$$\Sing_\sigma (\phi) := \{ \fn_\fp \in \Spec B \, | \,  B_{\fn_\fp} \text { is not } \fn_\fp \text {-smooth over } A_\fp \}$$
the {\em singular locus of $\phi$ along the section}
$\sigma$. Then
$$\Sing_\sigma (\phi) = \{ \fn_\fp \in \Spec B \, | \ \hat B(\fp)  \text { is not regular} \}.$$
\end{lemma}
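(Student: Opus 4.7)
The plan is to produce a chain of equivalences that moves from the stalk $B_{\fn_\fp}$, down to the fibre $B_{\fn_\fp}/\fp B_{\fn_\fp}$, then across to its $\langle x\rangle$-adic completion $\hat B(\fp)$, and finally from smoothness on that completion to regularity. Each link is provided by a result already available in the paper, so the work is really to line the equivalences up correctly.

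First, by the very definition of $\Sing_\sigma(\phi)$, a point $\fn_\fp$ lies in $\Sing_\sigma(\phi)$ precisely when the map of local rings $A_\fp \to B_{\fn_\fp}$ fails to be $\fn_\fp$-smooth. Since $\phi$ is assumed flat, so is $A_\fp \to B_{\fn_\fp}$, and Grothendieck's Theorem \ref{thm.Groth} applies: $A_\fp \to B_{\fn_\fp}$ is $\fn_\fp$-smooth if and only if the closed fibre $B_{\fn_\fp}/\fp B_{\fn_\fp}$ is $\x$-smooth over $k(\fp)$ (note that the maximal ideal of the fibre is the image of $\fn_\fp$, which is generated by $x_1,\dots,x_n$).

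Second, I want to pass to the completion. By Lemma \ref{lem.loc}(2), $\hat B(\fp)$ is exactly the $\x$-adic completion of $B_{\fn_\fp}/\fp B_{\fn_\fp}$. Lemma \ref{lem.ismooth} (applied with $A=k(\fp)$, $B=B_{\fn_\fp}/\fp B_{\fn_\fp}$, $I=\x$) tells us that $\x$-smoothness of the fibre over $k(\fp)$ is equivalent to $\x\hat B(\fp)$-smoothness of $\hat B(\fp)$ over $k(\fp)$. Hence $\fn_\fp \notin \Sing_\sigma(\phi)$ if and only if $k(\fp)\to \hat B(\fp)$ is formally smooth with respect to the maximal ideal.

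Third, since $\hat B(\fp)\cong k(\fp)[[x]]/\hat I(\fp)$ is a quotient of a formal power series ring over the field $k(\fp)$, Lemma \ref{lem.jac} applies: formal smoothness of $\hat B(\fp)$ over $k(\fp)$ with respect to its maximal ideal is equivalent to regularity of $\hat B(\fp)$. Concatenating the three equivalences yields $\fn_\fp \in \Sing_\sigma(\phi)$ iff $\hat B(\fp)$ is not regular, which is the claim.

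The only point that requires a bit of care is the identification of the maximal ideal of the fibre with $\x$ and the verification that Lemma \ref{lem.jac} indeed covers the case of a non-perfect residue field $k(\fp)$; but this is exactly the content of that lemma, which was stated over an arbitrary field. No other obstacle seems to arise.
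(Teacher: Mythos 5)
Your proof is correct and follows exactly the paper's argument: Grothendieck's Theorem \ref{thm.Groth} (using flatness) to reduce to $\x$-smoothness of the fibre $B_{\fn_\fp}/\fp B_{\fn_\fp}$ over $k(\fp)$, then Lemma \ref{lem.ismooth} to pass to the completion $\hat B(\fp)$, and finally Lemma \ref{lem.jac} to convert formal smoothness over $k(\fp)$ into regularity. No discrepancies to report.
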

\begin{proof}
$B_{ \fn_\fp}/\fp B_{ \fn_\fp}$  is $\x$-smooth over  $k(\fp)$ iff $(B_{ \fn_\fp}/\fp B_{ \fn_\fp})^\wedge =  k(\fp)[[x]]/\hat I(\fp)$  is $\x$-smooth over  $k(\fp)$ by Lemma \ref{lem.ismooth}. The claim follows from Lemma \ref{lem.jac}.
\end{proof}

Since we assumed $B$ to be flat over $A$, we have 
$\dim \hat B(\fp) = \dim B_{\fn_\fp} - \dim A_\fp$ (by \cite[Theorem 15.1]{Mat86}).
If $\phi$ is of pure relative dimension $d$ (i.e. $\hat B(\fp)$ is pure $d$-dimensional for all $\fp$) then Lemma \ref{lem.jac} implies
$$\Sing_\sigma (\phi) = \{ \fn_\fp \in \Spec B \, | \, \hat I_d(Jac (I))(\fp) \text { is a proper ideal of } k(\fp)[[x]] \},$$
where $Jac(I)$ is the Jacobian matrix $(\partial F_j / \partial x_i)$ and
$I_d (Jac(I)) \subset A[[x]]$ the ideal defined by the $d\times d$-minors.
\medskip

\subsection{Milnor number and Tjurina number of hypersurface singularities} \label{ssec.mutau}
Let $\k$ be a field and $f \in \k[[x]],  x = (x_1,\cdots, x_n)$ a formal power series. The most important invaraints are the {\em Milnor number} $\mu(f)$ and the {\em Tjurina number}  $\tau(f)$, defined as
$$
\begin{array}{*{20}l}
\mu(f) & = & \dim_\k  \k[[x]]/j(f),\\
\tau(f) & = & \dim_\k  \k[[x]]/\langle f, j(f)\rangle,
\end{array}
$$
where $j(f) = \langle {\partial f}/{\partial x_1}, \ldots, {\partial f}/{\partial x_n}\rangle$ is the {\em Jacobian ideal} of $f$. We say that $f$ has an {\em isolated critical point} (at 0) resp. an {\em isolated singularity} (at 0) if $\mu(f) < \infty$
resp.  $\tau(f) < \infty$. Note that $\tau(f) < \infty$ iff  $\k[[x]]/\langle f \rangle$ has an isolated singularity in the sense of Definition \ref{def.sing}.

\begin{remark}\label{rm.char0}{\em
Let $\chara(\k) = 0$. It is proved in \cite[Theorem 2]{BGM12} that for $f\in\langle x\rangle$, 
 $\mu(f) < \infty \Leftrightarrow \tau(f) < \infty$  but it is easy to see that this is not true in positive characteristic. We have always  $\tau(f) \leq \mu(f)$  and
  $\tau(f) = \mu(f)  \Leftrightarrow f \in j(f)$.
 If $\k = \C$ and if $f \in \langle x\rangle^2$ has an isolated singularity, this is equivalent to $f$ being quasi homogeneous
  by a theorem of K. Saito (see \cite{Sa71}). His  proof generalises to any algebraically closed field of characteristic zero (cf. \cite[Theorem 2.1]{BGM11}).}
 \end{remark}
\medskip

We consider now families of singularities. Let $A$ be a Noetherian ring and $F\in R=A[[x]]$. Set 
$$j(F):=\langle {\partial F}/{\partial x_1}, \ldots, {\partial F}/{\partial x_n}\rangle$$ and for $\fp \in \Spec A$ denote by 
$F(\fp)$ the image of $F$ in $ k(\fp)[[x]]$.  Then the 
Milnor number 
$$ \mu (F(\fp)) =  \dim_{k(\fp)}  k(\fp)[[x]]/j(F(\fp))$$ 
and the Tjurina number 
$$\tau (F(\fp)) =  \dim_{k(\fp)}  k(\fp)[[x]]/\langle F(\fp),j(F(\fp))\rangle $$ 
are defined, and we deduce now the semicontinuity of $ \mu (F(\fp))$ and $\tau (F(\fp)).$

\begin{proposition} \label{prop.mutau}
Let $A$ be Noetherian, $F\in R=A[[x]]$ and  $\fp \in \Spec A$. Assume that 
$V(j(F))\subset V(\x)$ resp.  $V(\langle F,  j(F)\rangle)\subset V(\x)$ (as sets), or
 $\dim A =1$, or $F\in A[x]$, or $A$ is a complete local ring containing a field.
Then $\mu(F(\fp))$ and $\tau(F(\fp))$ are semicontinuous at $\fp \in \Spec A$. 
\end{proposition}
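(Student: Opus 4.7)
The plan is to realize $\mu(F(\fp))$ and $\tau(F(\fp))$ as completed fibre dimensions of two finitely generated $R$-modules, and then invoke Corollary \ref{cor.qf}, which already packages all four hypotheses of the proposition.

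Set $M_\mu := R/j(F)$ and $M_\tau := R/\langle F, j(F)\rangle$. These are finitely presented $R$-modules, with presentations
\[
R^{n}\xrightarrow{\,T_\mu\,} R \to M_\mu \to 0, \qquad R^{n+1}\xrightarrow{\,T_\tau\,} R \to M_\tau \to 0,
\]
where $T_\mu = (\partial F/\partial x_1,\ldots,\partial F/\partial x_n)$ and $T_\tau = (F,\partial F/\partial x_1,\ldots,\partial F/\partial x_n)$. Applying $\hat\otimes_A k(\fp)$ and using Corollary \ref{cor.pres} (or Lemma \ref{lem.loc}.3) together with the fact that partial derivatives commute with the coefficient reduction $A[[x]]\to k(\fp)[[x]]$, we get
\[
\hat M_\mu(\fp) \cong k(\fp)[[x]]/j(F(\fp)), \qquad \hat M_\tau(\fp)\cong k(\fp)[[x]]/\langle F(\fp), j(F(\fp))\rangle,
\]
so that $\mu(F(\fp)) = \hat d_\fp(M_\mu)$ and $\tau(F(\fp)) = \hat d_\fp(M_\tau)$. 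The proposition is thereby reduced to the semicontinuity of $\hat d_\fp$ on $M_\mu$ and $M_\tau$.

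Next I would check that each of the four hypotheses in the statement falls into one of the four cases of Corollary \ref{cor.qf}. If $V(j(F)) \subset V(\x)$, then $\Supp_R(M_\mu) = V(j(F)) \subset V(\x)$, so Corollary \ref{cor.qf}.1 (equivalently Proposition \ref{prop.ann}.1) yields semicontinuity of $\hat d_\fp(M_\mu)=\mu(F(\fp))$; the analogous inclusion $V(\langle F,j(F)\rangle)\subset V(\x)$ handles $M_\tau$. The hypothesis $\dim A = 1$ is exactly Corollary \ref{cor.qf}.2 (Theorem \ref{thm.1dim}) applied to both modules. If $F \in A[x]$, then all the generators of $j(F)$ and $\langle F,j(F)\rangle$ lie in $A[x]$, so the presentation matrices $T_\mu$ and $T_\tau$ have polynomial entries; hence both modules are algebraically presented and Corollary \ref{cor.qf}.3 (Theorem \ref{thm.hens}) applies. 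Finally, if $A$ is a complete local ring containing a field, Corollary \ref{cor.qf}.4 applies directly.

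There is no real obstacle: the entire argument is a translation of the hypotheses into the language of completed fibres plus an appeal to the main theorems of Section \ref{sec.qms}. The only point that deserves explicit mention is the compatibility of $j(F)$ with base change $A\to k(\fp)$, which is immediate because $\partial/\partial x_i$ is $A$-linear and thus commutes with any $A$-algebra map into $k(\fp)[[x]]$; this is what guarantees the identifications $\hat M_\mu(\fp) = k(\fp)[[x]]/j(F(\fp))$ and $\hat M_\tau(\fp) = k(\fp)[[x]]/\langle F(\fp),j(F(\fp))\rangle$ underlying the whole reduction. Note also that the statement requires no finiteness of $\mu$ or $\tau$ at $\fp$, since the semicontinuity inequality is vacuous when $\hat d_\fp(M) = \infty$.
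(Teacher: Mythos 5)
Your proposal is correct and follows essentially the same route as the paper's own proof: realize $\mu(F(\fp))$ and $\tau(F(\fp))$ as $\hat d_\fp$ of the modules $R/j(F)$ and $R/\langle F,j(F)\rangle$ via Lemma \ref{lem.loc} (equivalently Corollary \ref{cor.pres}), then cite Corollary \ref{cor.qf}, whose four cases match the four hypotheses. Your added remark on the compatibility of the Jacobian ideal with base change makes explicit a point the paper leaves implicit, but this is a matter of presentation, not of substance.
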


\begin{proof} 
Set 
$M=R/ j(F)$ resp. $M=R/ \langle F, j(F) \rangle$, then $\Supp_R (M) = V(j(F))$ resp.  $\Supp_R (M) = V(\langle F,  j(F)\rangle)$.
 Using Lemma \ref{lem.loc} we get $\hat d_\fq (M) = \mu(F(\fq))$ resp. $\hat d_\fq(M) = \tau(F(\fq))$ for $\fq \in \Spec A$. The result follows from 
 Corollary \ref{cor.qf}.
\end{proof}

\begin{corollary}  \label{cor.mutau}
Let $F \in \Z[x]$, $p\in \Z$ a prime number and denote by $F_p$ the image of $F$ in $\F_p[[x]]$ and by $F_0$ the image of $F$ in $\Q[[x]]$.

If $\mu(F_p)$ is  finite, then  $\mu(F_p) \geq \mu(F_0)$ and  $\mu(F_p)  \geq  \mu(F_q)$ for all except finitely many prime numbers $q\in \Z$. In particular, if 
$\mu(F_p)$ is  finite for some $p$ then $\mu(F_0)$ is finite.

If $\mu(F_0)$ is  finite, then $\mu(F_0) \geq \mu(F_q)$
(and hence ``='') for all except finitely many prime numbers $q\in \Z$.

The same holds for the Tjurina number.

\begin {example} {\em We illustrate the corollary by a simple example.
Let $F=F_0=x^p+x^{(p+1)}+y^q$ with $p,q$ prime numbers. Then $\mu(F_0)= (p-1)(q-1)$, 
$\mu (F_p) =p(q-1)$ $\ge \mu(F_0)$ while $\mu (F_q) =\infty$. Moreover, for any prime number $r \neq p,q$ we have $ \mu(F_r) = \mu(F_0)$.}
\end{example}
\end{corollary}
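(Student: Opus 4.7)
The plan is to reduce the statement to a direct application of Corollary \ref{cor.Z} (equivalently, to Theorem \ref{thm.1dim} with $A=\Z$). First I would treat the Milnor case by setting
\[
M := \Z[[x]]/j(F), \qquad j(F)=\langle \partial F/\partial x_1,\ldots,\partial F/\partial x_n\rangle,
\]
and observing that, since $F\in\Z[x]$, the ideal $j(F)$ is generated by polynomials, so $M$ admits a (polynomial, hence algebraic) presentation of the form
\[
\Z[[x]]^n \xrightarrow{\,T\,} \Z[[x]] \to M \to 0
\]
with $T=(\partial F/\partial x_1,\ldots,\partial F/\partial x_n)$. This is a finitely generated $\Z[[x]]$-module as required in Corollary \ref{cor.Z}.

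The next step is to identify the completed fibres. By Corollary \ref{cor.pres} applied to the presentation $T$, for any prime $\fp\in\Spec\Z$ one has
\[
\hat M(\fp) \;=\; k(\fp)[[x]]\big/\langle \partial F/\partial x_1,\ldots,\partial F/\partial x_n\rangle \;=\; k(\fp)[[x]]/j(F(\fp)),
\]
and hence $\hat d_{\langle p\rangle}(M)=\mu(F_p)$ for each prime $p$ and $\hat d_{\langle 0\rangle}(M)=\mu(F_0)$, in the notation of Corollary \ref{cor.Z}. Now a direct application of Corollary \ref{cor.Z}.1 gives: if $\mu(F_p)=\hat d_{\langle p\rangle}(M)<\infty$, then $\mu(F_p)\geq \mu(F_0)$ and $\mu(F_p)\geq \mu(F_q)$ for all but finitely many primes $q$; in particular $\mu(F_0)<\infty$. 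Corollary \ref{cor.Z}.2 yields the second half: if $\mu(F_0)<\infty$, then $\mu(F_0)\geq \mu(F_q)$ for cofinitely many primes $q$, and combining with the first part gives the equality.

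Finally, for the Tjurina number I would run exactly the same argument with the module
\[
N := \Z[[x]]/\langle F,\partial F/\partial x_1,\ldots,\partial F/\partial x_n\rangle,
\]
which is again algebraically presented since $F$ and all its partials lie in $\Z[x]$. The identification $\hat N(\fp)=k(\fp)[[x]]/\langle F(\fp),j(F(\fp))\rangle$ via Corollary \ref{cor.pres} gives $\hat d_{\langle p\rangle}(N)=\tau(F_p)$ and $\hat d_{\langle 0\rangle}(N)=\tau(F_0)$, and Corollary \ref{cor.Z} delivers the analogous inequalities. No step presents a real obstacle; the only substantive content, namely semicontinuity of the completed fibre dimension over the one-dimensional base $\Spec\Z$, has already been carried out in Theorem \ref{thm.1dim} and packaged as Corollary \ref{cor.Z}.
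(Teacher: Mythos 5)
Your proposal is correct and follows essentially the same route as the paper: the corollary is exactly the specialization to $A=\Z$ of the general semicontinuity statement (the paper phrases it via Proposition \ref{prop.mutau}, you via Corollary \ref{cor.Z}, but both rest on Theorem \ref{thm.1dim} for the one-dimensional base $\Spec\Z$, whose nonempty open sets contain $\langle 0\rangle$ and all but finitely many primes). The identification of the completed fibres $\hat M(\fp)=k(\fp)[[x]]/j(F(\fp))$ via Corollary \ref{cor.pres} and the analogous treatment of the Tjurina module are exactly as intended.
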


\medskip

\subsection{Determinacy of ideals} \label{ssec.det}
Let $I$ be a proper ideal of $\k[[x]]$ and
 $f_1, \ldots, f_m$ a minimal set of generators of $I$.
$I$ is called {\em contact $k$-determined} if for every ideal $J$ of $\k[[x]]$ that can be generated by $m$ elements $g_1, \ldots, g_m$ 
with $g_i-f_i\in\langle x \rangle ^{k+1}$ for $i=1,\ldots,m$, 
the local $\k$-algebras  $\k[[x]]/I$ and  $\k[[x]]/J$ are isomorphic.
$I$ is called {\em {finitely contact determined}} if $I$ is contact $k$-determined for some $k$. It is easy to see (cf. \cite[Proposition 4.3]{GPh19}) that these notions depend only on the ideal and not on the set of generators.

The ideal $I$ or the ring $\k[[x]]/I$ is called a {\em complete intersection} if $\dim \k[[x]]/I = n-m$ and an {\em  isolated complete intersection singularity (ICIS)} if it has moreover an isolated singularity.
\medskip

Set $f=(f_1,...,f_m) \in \k [[x]]^m$ and denote by  $\langle {\partial f}/{\partial x_1}, \ldots, {\partial f}/{\partial x_n}\rangle$ the submodule of $\k[[x]]^m$, generated by the m-tuples ${\partial f}/{\partial x_i} = ({\partial f_1}/{\partial x_i},\dots, {\partial f_m}/{\partial x_i})$,  $i=1,\ldots, n$.
We define
$$T_I :=  \k[[x]]^m\Big/I\k[[x]]^m+\langle {\partial f}/{\partial x_1}, \ldots, {\partial f}/{\partial x_n}\rangle.$$

If $I$ is a complete intersection, then  
$\tau(I) := \dim_\k T_I$
is called the {\em Tjurina number of I}. For a complete intersection $T_I$ is concentrated on the singular locus of $\k[[x]]/I$ (Definiton \ref{def.sing})
and $\tau(I)$ is finite iff $I$ has an isolated singularity. This follows from \cite[Lemma 3.1]{GPh19}, where it is shown that the ideals $I+I_{n-m}(Jac(I))$
and $\Ann_{\k[[x]]} (T_I)$ have the same radical.
\medskip

The module $T_I $ is used in the following theorem.

\begin {theorem}  [\cite{GPh19}, Theorem 4.6] \label{thm.GPh19}
Let $I \subset  \k[[x]]$ be a proper ideal  and $\k$ infinite. Then the following are equivalent:
\smallskip

(i) $I$ is finitely contact determined.  	
\smallskip

(ii) $\dim_\k T_I <\infty$.	
\smallskip

(iii)  $R/I$ is an isolated complete intersection singularity. 	
\smallskip

\noindent If one of these condition is satisfied then $I$ is contact $(2\dim_\k T_I -ord(I) +2)$-determined, where $ord(I) = \max \{k \ | \ I \subset \langle x \rangle^k \}$. The implications
$(iii) \Leftrightarrow (ii) \Rightarrow (i)$ hold for any field $\k$, as well as $(i) \Rightarrow  (ii)$ for hypersurfaces.
\end{theorem}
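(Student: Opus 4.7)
The plan is to prove the equivalences in the order $(iii) \Leftrightarrow (ii) \Rightarrow (i)$, handling the partial converse $(i) \Rightarrow (ii)$ for hypersurfaces separately. The whole argument is organised around the module $T_I$, which serves as the infinitesimal tangent space to the contact orbit of $f$.

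For $(iii) \Leftrightarrow (ii)$ I would invoke the radical identity $\sqrt{\Ann_R(T_I)} = \sqrt{I + I_{n-m}(\mathrm{Jac}(I))}$ already cited in the excerpt (from \cite{GPh19}), which is valid once $I$ is a complete intersection. It makes $\dim_\k T_I < \infty$ equivalent to $T_I$ being $\langle x \rangle$-primary, equivalently to the singular locus of $\k[[x]]/I$ being concentrated at the origin, i.e.\ to the ICIS condition. For the direction $(ii) \Rightarrow$ complete intersection one first observes that finite length of $T_I$ forces the vectors $\partial f/\partial x_i$ to span $R^m$ modulo $I$ generically; the Jacobian criterion then gives $\dim R/\fp = n - m$ at every minimal prime of $I$, which combined with $I$ being generated by $m$ elements produces the complete intersection property.

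The heart of the proof is $(ii) \Rightarrow (i)$ with the explicit bound. Set $\tau = \dim_\k T_I$, $c = \mathrm{ord}(I)$, and $M_f := I R^m + \sum_{i=1}^n R \cdot \partial f/\partial x_i$. Finite length of $R^m / M_f$ together with Nakayama applied to the $\langle x \rangle$-adic filtration force $\langle x \rangle^{\tau+1} R^m \subset \langle x \rangle M_f$; combined with $I \subset \langle x \rangle^c$, I would show that any $g$ with $g - f \in \langle x \rangle^{k+1} R^m$ for $k \geq 2\tau - c + 2$ admits a decomposition
$$g - f \;=\; U \cdot f \;+\; \sum_{i=1}^n \xi_i \cdot \frac{\partial f}{\partial x_i},$$
with $U$ an $m \times m$ matrix over $R$ whose entries lie in $\langle x \rangle$ and $\xi_i \in \langle x \rangle^2 R$. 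The classical homotopy method then applies: integrating the straight-line family $f_t = (1-t)f + tg$ by iteratively solving the linearised equation above yields coordinate changes $\phi_j$ and unit matrices $\Phi_j$ which, because each step gains at least one order of $\langle x \rangle$, converge $\langle x \rangle$-adically to an automorphism $\phi$ of $\k[[x]]$ and a matrix $\Phi \in \mathrm{GL}_m(\k[[x]])$ satisfying $\phi(g) = \Phi \cdot f$.

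For $(i) \Rightarrow (ii)$ in the hypersurface case, $T_I$ equals the Tjurina algebra $\k[[x]]/\langle f, j(f) \rangle$. If this were infinite-dimensional, then $\langle x \rangle^N / (\langle f, j(f)\rangle \cap \langle x \rangle^N)$ is infinite-dimensional for every $N$, so a parameter count over the infinite field $\k$ produces, at every order $N$, infinitely many perturbations $g \equiv f \pmod{\langle x \rangle^N}$ lying in pairwise distinct contact classes, contradicting finite determinacy; the infiniteness of $\k$ enters precisely to guarantee inequivalent parameter values in each order. The hardest piece of the whole argument will be the quantitative bound in $(ii) \Rightarrow (i)$: the factor $2\tau$ arises because the summand $\sum \xi_i \partial f/\partial x_i$ only gains one order of $\langle x \rangle$ per differentiation, so two iterations through $M_f$ are required to absorb a perturbation of order $\tau + 1$, while the correction $-\mathrm{ord}(I) + 2$ reflects that $I$ already sits in $\langle x \rangle^c$. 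Convergence of the iterative construction to a genuine (and not merely formal) contact equivalence rests on the $\langle x \rangle$-adic completeness of $\k[[x]]$, which is exactly why this theorem lives in the formal power series setting.
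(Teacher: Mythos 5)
First, a point of reference: the paper does not prove this statement at all — it is quoted verbatim as \cite{GPh19}, Theorem 4.6, and the only ingredient of its proof that the present paper actually uses is \cite[Lemma 3.1]{GPh19} (the radical identity for $\Ann_{\k[[x]]}(T_I)$). So your proposal can only be judged on its own merits and against the strategy of the cited source. In outline you do follow that strategy: $T_I$ as the infinitesimal tangent object, the localization/Jacobian-criterion argument for (ii) $\Leftrightarrow$ (iii) (together with Krull's height theorem to force codimension exactly $m$), and an $\langle x\rangle$-adically convergent iteration for (ii) $\Rightarrow$ (i). That architecture is correct.

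There are, however, genuine gaps. The most concrete one is logical: you establish (iii) $\Leftrightarrow$ (ii) $\Rightarrow$ (i) and then (i) $\Rightarrow$ (ii) \emph{only for hypersurfaces}, so for $m\geq 2$ your cycle of implications never closes and the asserted three-way equivalence over an infinite field is simply not proved; the implication (i) $\Rightarrow$ (ii) for general ideals — which is precisely where the hypothesis that $\k$ is infinite enters — is the part you omit. Second, even in the hypersurface case your argument for (i) $\Rightarrow$ (ii) is not a proof: "a parameter count produces infinitely many pairwise inequivalent perturbations" presupposes an upper bound on the size of a contact class, which is the substance of the implication. The standard route is the reverse one: $k$-determinacy forces $\langle x\rangle^{k+1}\subset \langle f\rangle + \langle x\rangle j(f) + \langle x\rangle^{k+2}$, and Nakayama then gives $\tau(f)<\infty$; note also that your appeal to the infiniteness of $\k$ here is at odds with the theorem's claim that this implication holds for hypersurfaces over \emph{any} field. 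Third, the quantitative heart — that $k\geq 2\dim_\k T_I-\mathrm{ord}(I)+2$ suffices for the decomposition $g-f=Uf+\sum_i\xi_i\,\partial f/\partial x_i$ with $U$ over $\langle x\rangle$ and $\xi_i\in\langle x\rangle^2$, and that the resulting iteration strictly raises the order of the remainder — is asserted rather than verified. In arbitrary characteristic one cannot "integrate the straight-line family"; one must run the purely algebraic iteration and control the higher-order Taylor terms of $f(x+\xi)$, and it is exactly in that bookkeeping that the constants $2\tau$ and $-\mathrm{ord}(I)+2$ have to be earned rather than motivated by the heuristic you give.
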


\begin{proposition}\label{prop.TI} 
Let $A$ be Noetherian, $F_1, \ldots, F_m \in \x A[[x]]$. Let  $I \subset  A[[x]]$ be the ideal generated by  $F_1,\ldots, F_m $  and $\hat I(\fp) \subset k(\fp)[[x]]$, $\fp \in \Spec A$,  the ideal generated by $F_1(\fp),\ldots,F_m(\fp) \in k(\fp)[[x]]$.

Assume that 
$V(I+I_{n-m}(Jac(I)))\subset V(\x)$ (as sets), or
 $\dim A =1$, or $F_i \in  \x A[x]$ for $i=1, \ldots, m$, or $A$ is a complete local ring containing a field.

Then
any $\fp \in \Spec A$ has an open neighbourhood $U \subset \Spec A$ such that for all $\fq \in U$  $\dim_{k(\fp)} T_{\hat I(\fp)} \geq  \dim_{k(\fq)} T_{\hat I(\fq)} $. 
\end{proposition}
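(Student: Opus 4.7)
The plan is to realise the numbers $\dim_{k(\fp)} T_{\hat I(\fp)}$ as the completed fibre dimensions $\hat d_\fp(M)$ of a single finitely generated $R$-module $M$ built from the family, then invoke Corollary \ref{cor.qf}. Write $F = (F_1, \ldots, F_m)$ and set
$$M := R^m \Big/ \Big(I R^m + \big\langle \partial F/\partial x_1, \ldots, \partial F/\partial x_n \big\rangle\Big),$$
a finitely presented $R$-module given as the cokernel of an $m \times (m^2 + n)$ matrix whose columns are the vectors $F_j e_i$ (generating $IR^m$) and $\partial F/\partial x_k$. If all $F_i \in \x A[x]$, then the derivatives $\partial F_j/\partial x_k$ lie in $A[x]$ as well, so every entry of this matrix is polynomial and $M$ is algebraically (in fact polynomially) presented.

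The key identification is that, for any $\fp \in \Spec A$, Corollary \ref{cor.pres} applied to this presentation yields
$$\hat M(\fp) = k(\fp)[[x]]^m \Big/ \Big(\hat I(\fp) k(\fp)[[x]]^m + \big\langle \partial F(\fp)/\partial x_1, \ldots, \partial F(\fp)/\partial x_n\big\rangle\Big) = T_{\hat I(\fp)},$$
since the canonical map $R \to k(\fp)[[x]]$ sends $F_i$ to $F_i(\fp)$ and $\partial F_j/\partial x_k$ to $\partial F_j(\fp)/\partial x_k$. Consequently $\hat d_\fp(M) = \dim_{k(\fp)} T_{\hat I(\fp)}$, and the proposition reduces to showing semicontinuity of $\hat d_\fp(M)$ at $\fp$ under each of the four listed hypotheses.

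Three of these hypotheses match directly with the corresponding items of Corollary \ref{cor.qf}: $\dim A = 1$ is item 2, polynomial entries $F_i \in \x A[x]$ give an algebraic presentation of $M$ (item 3), and $A$ complete local containing a field is item 4. The remaining case $V(I + I_{n-m}(\mathrm{Jac}(I))) \subset V(\x)$ is to be matched with item 1, namely $\Supp_R(M) \subset V(\x)$. For this I will use the family analogue of \cite[Lemma 3.1]{GPh19}: in $R = A[[x]]$ the ideals $\Ann_R(M)$ and $I + I_{n-m}(\mathrm{Jac}(I))$ have the same radical. The argument in loc.\ cit.\ is a local Fitting-ideal computation on the complete intersection locus of the ideal $I$, and it carries over verbatim with the base field replaced by $A$, since Fitting ideals (and hence the equality up to radical with the annihilator of the tangent module $T$) are stable under base change. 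Combined with $\Supp_R(M) = V(\Ann_R(M))$ (Remark \ref{rem.ann}), this translates the hypothesis on $V(I + I_{n-m}(\mathrm{Jac}(I)))$ into the support condition of Corollary \ref{cor.qf}(1).

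The main technical point I expect to check carefully is this relative version of the annihilator/Fitting-ideal identification; once it is granted, Corollary \ref{cor.qf} delivers an open neighbourhood $U \subset \Spec A$ of $\fp$ with $\hat d_\fq(M) \le \hat d_\fp(M)$ for all $\fq \in U$, which is exactly $\dim_{k(\fq)} T_{\hat I(\fq)} \le \dim_{k(\fp)} T_{\hat I(\fp)}$.
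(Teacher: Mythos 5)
Your proposal is correct and follows essentially the same route as the paper, whose entire proof is the observation that $\Supp(T_I)=V(I+I_{n-m}(Jac(I)))$ by \cite[Lemma 3.1]{GPh19} followed by an appeal to Corollary \ref{cor.qf}. The one step you flag for careful checking --- the relative (over $A$) version of the Fitting-ideal/annihilator identification --- is exactly the point the paper handles by citing that lemma directly, so your more explicit treatment of it, together with the identification $\hat M(\fp)=T_{\hat I(\fp)}$ via Corollary \ref{cor.pres}, only makes the argument more complete.
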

\begin{proof} By \cite[Lemma 3.1]{GPh19} $\Supp (T_I) = V(I+I_{n-m}(Jac(I)))$. The claim follows from Corollary \ref{cor.qf}.
\end{proof}
\bigskip

\subsection{Tjurina number of complete intersection singularities} \label{ssec.icis}

We show first that being a regular sequence in a flat family of power series in $R=A[[x]]$ is an open property.

\begin{proposition}\label{prop.reg}
Let $A$ be a Noetherian ring, $F_i \in \x R$, $i=1, \ldots, m$ and $M$ a finitely generated $R$-module. For
$\fp \in \Spec A$ we denote by $F_i(\fp)$  the image of $F_i$ in $\hat R(\fp)= k(\fp)[[x]]$ and by $F_{i\fn_\fp}$  the image of $F_i$ in $R_{\fn_\fp}(\fp)$ (cf. Definition \ref{def.cf}). 

\begin{enumerate}
\item [(i)] If  $\fp \in \Spec A$  then $F_1(\fp),\ldots,F_m(\fp)$  is an $\hat M(\fp)$-sequence iff $F_{1\fn_\fp},\ldots,F_{m\fn_\fp}$ is an $M_{\fn_\fp}(\fp)$-sequence.
\item [(ii)] Let  $F_1,\ldots, F_m $ be an $M$-sequence and let $M/\langle F_{1},\ldots,F_{m}\rangle M$ be $A$-flat. Then
$F_1(\fp),\ldots,F_m(\fp)$  is  an $\hat M(\fp)$-sequence for all $\fp \in \Spec A$.
 \smallskip
 
 \item [(iii)] Let  $\fp \in \Spec A$ and
 $F_1(\fp),\ldots,F_m(\fp)$  an $\hat M(\fp)$-sequence. 
 If $M/\langle F_{1},\ldots,F_{m}\rangle M$ is flat over $A$, then there exists an  open neighbourhood $U$ of  $\fp$  in $\Spec A$ such that 
 $F_1(\fq),\ldots,F_m(\fq)$ is a $\hat M(\fq)$-sequence for all $\fq$ in $U$.
\end{enumerate}
\end{proposition}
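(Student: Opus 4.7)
For (i), Lemma \ref{lem.loc} realises $\hat M(\fp)$ as the $\x$-adic completion of $M_{\fn_\fp}(\fp)$, a finitely generated module over the Noetherian local ring $R_{\fn_\fp}(\fp)$ whose maximal ideal is generated by $\x$. The completion map is faithfully flat, and since Koszul homology commutes with flat base change, the Koszul homology of $F(\fp)$ on $\hat M(\fp)$ equals that on $M_{\fn_\fp}(\fp)$ tensored with $\hat R(\fp)$; faithful flatness makes simultaneous vanishing equivalent, yielding (i).

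For (ii), I localise at $\fn_\fp$ for fixed $\fp \in \Spec A$: $M_{\fn_\fp}$ is finite over the Noetherian local ring $R_{\fn_\fp}$, the $F_i$ lie in the maximal ideal, $F_1,\dots,F_m$ is $M_{\fn_\fp}$-regular, and $(M/FM)_{\fn_\fp}$ is $A_\fp$-flat. I argue by reverse induction on $i$ that each intermediate quotient $N_i := M_{\fn_\fp}/(F_1,\dots,F_i)M_{\fn_\fp}$ is $A_\fp$-flat: given $N_{i+1}$ flat, the short exact sequence $0 \to N_i \xrightarrow{F_{i+1}} N_i \to N_{i+1} \to 0$ together with the Tor long exact sequence shows that $F_{i+1}$ acts as an isomorphism on $\mathrm{Tor}_j^{A_\fp}(N_i, k(\fp))$ for $j \ge 1$. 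These Tor modules are finitely generated over $R_{\fn_\fp}$ (compute via a finite free $A_\fp$-resolution of $k(\fp)$), and $F_{i+1} \in \x$ lies in the maximal ideal of $R_{\fn_\fp}$, so Nakayama forces them to vanish; the local criterion of flatness then gives $A_\fp$-flatness of $N_i$, in particular of $M_{\fn_\fp}$. With $M_{\fn_\fp}$ flat, the Koszul resolution $K^\bullet(F; M_{\fn_\fp}) \to M_{\fn_\fp}/FM_{\fn_\fp}$ has $A_\fp$-flat terms, hence remains exact after $\otimes_{A_\fp} k(\fp)$, exhibiting $F(\fp)$ as an $M_{\fn_\fp}(\fp)$-regular sequence; (i) transfers this to $\hat M(\fp)$.

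For (iii), by (i) the sequence $F(\fp)$ is $M_{\fn_\fp}(\fp)$-regular, and $(M/FM)_{\fn_\fp}$ is $A_\fp$-flat. The converse of Matsumura's criterion \cite[Thm.~22.5]{Mat86} (fibre-regularity together with $A_\fp$-flatness of the final quotient forces $M_{\fn_\fp}$ to be $A_\fp$-flat and $F$ to be $M_{\fn_\fp}$-regular; proved by an analogous reverse-induction Nakayama argument) gives $H_i(F; M)_{\fn_\fp} = 0$ for $i > 0$. Each $H_i(F; M)$ is a finite $R$-module, its support is closed, and thus $V := \Spec R \smallsetminus \bigcup_{i > 0} \Supp_R(H_i(F; M))$ is an open neighbourhood of $\fn_\fp$ in $\Spec R$. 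Pulling back along the continuous section $\sigma : \Spec A \to \Spec R$, $\fq \mapsto \fn_\fq$, yields the open $U := \sigma^{-1}(V) \ni \fp$. For $\fq \in U$, $F$ is $M_{\fn_\fq}$-regular and $(M/FM)_{\fn_\fq}$ remains $A_\fq$-flat, so (ii) applies to give regularity of $F(\fq)$ on $\hat M(\fq)$.

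The principal difficulty is the reverse-induction flatness argument appearing in (ii) and in the converse criterion invoked in (iii): one must verify that the relevant Tor modules are finitely generated over the local ring $R_{\fn_\fp}$, so that Nakayama's lemma applies through the fact that $\x$ lies in the maximal ideal. Once this flatness upgrade is in place, the openness portion of (iii) follows routinely from the closedness of supports of finite $R$-modules combined with the continuity of the section $\sigma$.
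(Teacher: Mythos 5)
Parts (i) and (ii) are essentially the paper's argument. For (i) the paper works directly with the injectivity of each $F_i$ on the successive quotients, using that $\hat M_{i-1}(\fp)=M_{i-1,\fn_\fp}(\fp)\otimes_{R_{\fn_\fp}}R_{\fn_\fp}^\wedge$ with $R_{\fn_\fp}^\wedge$ faithfully flat; your Koszul-homology packaging is equivalent. For (ii), your reverse induction (Tor long exact sequence, finiteness of $\mathrm{Tor}_j^{A_\fp}(N_i,k(\fp))$ over $R_{\fn_\fp}$, Nakayama with $F_{i+1}\in\x$, local criterion) is exactly what the paper's citation of \cite[Theorem 22.2]{Mat86} encapsulates. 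One slip: ``a resolution with $A_\fp$-flat terms stays exact after $\otimes_{A_\fp}k(\fp)$'' is not a valid principle (a bounded exact complex of flat modules need not stay exact after tensoring, e.g. $0\to\Z\xrightarrow{2}\Z\to\Z/2\to0$). It is harmless here only because you have already shown that \emph{every} intermediate quotient $N_i$ is $A_\fp$-flat: tensoring each short exact sequence $0\to N_i\xrightarrow{F_{i+1}}N_i\to N_{i+1}\to0$ with $k(\fp)$ and using $\mathrm{Tor}_1^{A_\fp}(N_{i+1},k(\fp))=0$ gives the regularity of $F(\fp)$ directly; replace the Koszul sentence by this.

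Part (iii) has a genuine gap: the ``converse of Matsumura's criterion'' you invoke is false, and the ``analogous reverse-induction'' cannot prove it. Without already knowing that $F_{i+1}$ is injective on $N_i$ you do not have a short exact sequence to feed into the Tor argument, only $0\to K_i\to N_i\xrightarrow{F_{i+1}}N_i\to N_{i+1}\to0$ with unknown $K_i$; the long exact sequence then yields $K_i\otimes k(\fp)\cong \mathrm{Tor}_1^{A_\fp}(N_i,k(\fp))/F_{i+1}\mathrm{Tor}_1^{A_\fp}(N_i,k(\fp))$ and nothing is forced to vanish. Concretely, take $A=\k[[t]]$, $R=A[[x]]=\k[[t,x]]$, $M=R/\langle tx\rangle$, $m=1$, $F_1=x$, $\fp=\langle t\rangle$: then $\hat M(\fp)=\k[[x]]$ and $F_1(\fp)=x$ is regular on it, $M/F_1M\cong A$ is $A$-flat, yet $x$ is not regular on $M_{\fn_\fp}$ (it kills $t$), $M_{\fn_\fp}$ is not $A$-flat, and at the generic point $\fq=\langle 0\rangle$ (which lies in the only open neighbourhood of $\fp$, namely $\Spec A$) one has $\hat M(\fq)=\k((t))[[x]]/\langle tx\rangle=\k((t))\neq 0$, on which $x$ acts as zero. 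So your lemma, and indeed the statement of (iii) for general $M$, fails. You should know that the paper's own proof stumbles at the same point: it applies \cite[Theorem 22.5]{Mat86} to $F_{i\fn_\fp}^\wedge:\hat M_{i-1}\to\hat M_{i-1}$, but that theorem requires the \emph{target} of the map to be $A_\fp$-flat, which is not among the hypotheses. Both arguments become correct once one additionally assumes that $M_{\fn_\fp}$ is $A_\fp$-flat (as in the intended application, Proposition \ref{prop.icis}, where $M=R$): then Theorem 22.5 applies in the forward direction down the chain, giving $K_{i-1,\fn_\fp}=0$ and flatness of each $\hat M_i$ in turn, after which your openness argument via the closed supports of the $K_i$ and the continuous section $\sigma$ finishes the proof exactly as you describe.
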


\begin{proof}
Set $M_0=M,  M_i=M/\langle F_{1},\ldots,F_{i}\rangle M$ and  consider for 
$i=1,\ldots, m$ the exact sequence 
	\begin{align*}
	0 \to K_{i-1} \to M_{i-1}  \xrightarrow[\text {}]{F_i} M_{i-1} \to M_i \to 0,  \tag{*}\label{*}
	\end{align*}
with $K_{i-1}$ the kernel of $F_i$.

(i) By Lemma \ref{lem.loc}.2
 $\hat R(\fp) = R_{\fn_\fp}(\fp)^\wedge $ and  $\hat M_{i} (\fp) = 
 M_{i,{\fn_\fp}}(\fp)^\wedge$ for all $i$ and hence
 $$F_i(\fp) = F_{i\fn_\fp}^\wedge:  (M_{i-1,{\fn_\fp}}(\fp))^\wedge \to  (M_{i-1,{\fn_\fp}}(\fp))^\wedge.$$
Since $ M_{i,{\fn_\fp}}(\fp)$ is a finite $R_{\fn_\fp}$-module  we have
 (by \cite[Theorem 10.13]{AM69})
  $ \hat M_{i} (\fp) = M_{i,{\fn_\fp}}(\fp) \otimes_{R_{\fn_\fp}} R_{\fn_\fp}^\wedge$.  Moreover $R_{\fn_\fp}^\wedge$ is faithfully flat over the local ring $R_{\fn_\fp}$ (\cite[Theorem 8.14]{Mat86}). Hence 
  $F_{i\fn_\fp}: M_{i-1,{\fn_\fp}}(\fp) \to M_{i-1,{\fn_\fp}}(\fp)$ 
  is injective iff $F_i(\fp): \hat M_{i-1} (\fp) \to \hat M_{i-1}(\fp)$ is injective.
 
(ii) By assumption $K_{i-1}=0$ for $i=1,\ldots, m$ and $M/\langle F_{1},\ldots,F_{m}\rangle M$ is $A$-flat. 
By Lemma \ref{lem.maxA} the Jacobson radical of $R$  contains $\langle x \rangle$ and $F_{1},\ldots,F_{m}$ is a regular sequence contained in the Jacobson radical. Hence $M_{i}$ is $A$-flat and $M_{i,{\fn_\fp}}$ is $A_\fp$-flat for all $i$ (repeated application of \cite[Theorem 22.2]{Mat86}).
Tensoring  $0 \to M_{i-1,{\fn_\fp}} \to M_{i-1,{\fn_\fp}} \to M_{i,{\fn_\fp}} \to 0$ with $\otimes_{A_\fp} k(\fp)$ we get an exact sequence
$0 \to M_{i-1,{\fn_\fp}} (\fp) \to M_{i-1,{\fn_\fp}}(\fp)) \to M_{i,{\fn_\fp}}(\fp) \to 0$ 
for  $i=1,\ldots m$ by  \cite[Theorem 22.3]{Mat86}.  Now apply (i).

(iii) Localizing the exact sequence (\ref{*}) at $\fn_\fp$ we get an exact sequence of finite $R_{\fn_\fp} $-modules. Taking the $\x$-adic completion, 
the sequence stays exact and we see that $(K_{i-1,{\fn_\fp}})^\wedge = \ker \big( F_{i\fn_\fp}^\wedge :  (M_{i-1,{\fn_\fp}})^\wedge \to (M_{i-1,{\fn_\fp}})^\wedge\big).$ 
By Lemma \ref{lem.loc} $\hat M_{i-1}(\fp) = (M_{i-1,{\fn_\fp}})^\wedge \otimes_{A_\fp} k(\fp)$ and $F_i(\fp) =  F_{i\fn_\fp}^\wedge \otimes_{A_\fp} k(\fp)$, and by assumption $F_i(\fp)$ is injective.
We  apply now repeatedly \cite[Theorem 22.5 ]{Mat86} to 
$A_\fp \to \hat R_{\fn_\fp} = A_\fp[[x]] $ and to $F_{i\fn_\fp}^\wedge$
to get that  $(K_{i-1,{\fn_\fp}})^\wedge =
K_{i-1,{\fn_\fp}} \otimes_{R_{\fn_\fp}} R_{\fn_\fp}^\wedge =0$ 
and that $(M_{i,{\fn_\fp}})^\wedge = M_{i,{\fn_\fp}} \otimes_{R_{\fn_\fp}} R_{\fn_\fp}^\wedge$ is flat over $A_\fp$ for all $i$. 
Since $R_{\fn_\fp}^\wedge$ is faithfully flat over $R_{\fn_\fp}$
this implies $K_{i-1,{\fn_\fp}} =0$ and that 
$M_{i,{\fn_\fp}}$ is flat over $A_\fp$. 

The support of the $R$-module $K_{i-1}$ is closed and hence
 $(K_{i-1})_{\fn_\fq}^\wedge=0$  for $\fq$ in an open neighbourhood $U$ of $\fp$ in $\Spec A$. Moreover the flatness of  $M/\langle F_{1},\ldots,F_{m}\rangle M$ implies that $M_{\fn_\fq}^\wedge/\langle F_{1},\ldots,F_{m}\rangle M_{\fn_\fq}^\wedge$ is $A_\fq$-flat. 
Applying \cite[Theorem 22.5 ]{Mat86} now to 
$F_{i\fn_\fq}^\wedge :  (M_{i-1})_{\fn_\fq}^\wedge \to (M_{i-1})_{\fn_\fq}^\wedge $
we get that  $\hat M_{i-1}(\fq) \to  \hat M_{i-1}(\fq)$ is injective and that 
$F_1(\fq),\ldots,F_m(\fq)$ is an $\hat M(\fq)$-sequence. 
\end {proof}

\begin{proposition} \label{prop.icis}
Let $A$ be a Noetherian ring and  $I \subset \x A[[x]]$ an ideal generated by 
 $F_1,\ldots, F_m $, such that $ A[[x]]/I A[[x]]$ is $A$-flat.
  For $\fp \in \Spec A$ denote  by $\hat I(\fp) \subset  k(\fp)[[x]]$  the ideal generated by $F_1(\fp),\ldots,F_m(\fp)$.

\begin{enumerate}
\item  If  $\hat I(\fp)$ is a complete intersection, then $\hat I(\fq)$ is a complete intersection for $\fq$ in an open neighbourhood of $\fp$ in $\Spec A$. 

\item  Assume that $\hat I(\fp)$ is an ICIS and that the hypotheses of Proposition \ref{prop.TI} are satisfied. 
Then  $\hat I(\fq)$ is an ICIS with $\tau(I(\fp)) \geq \tau(I(\fq))$ for $\fq$ in an open neighbourhood of $\fp$ in $\Spec A$.
\end{enumerate}
\end{proposition}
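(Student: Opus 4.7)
The plan is to deduce part (1) directly from Proposition \ref{prop.reg}(iii) applied to the module $M=R=A[[x]]$, and then to deduce part (2) by combining part (1) with the semicontinuity of $\dim_{k(\fp)}T_{\hat I(\fp)}$ coming from Proposition \ref{prop.TI}, plus the characterization of ICIS via Theorem \ref{thm.GPh19}.

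For part (1), recall that in the Cohen--Macaulay local ring $\hat R(\fp)=k(\fp)[[x]]$ an ideal $\hat I(\fp)$ generated by $F_1(\fp),\ldots,F_m(\fp)$ is a complete intersection of codimension $m$ if and only if $F_1(\fp),\ldots,F_m(\fp)$ is a regular sequence. Take $M:=R$. The flatness hypothesis ``$M/\langle F_1,\ldots,F_m\rangle M$ flat over $A$'' in Proposition \ref{prop.reg}(iii) is precisely our assumption that $A[[x]]/I$ is $A$-flat, and by part (i) of that proposition the regularity of $F_1(\fp),\ldots,F_m(\fp)$ on $\hat R(\fp)$ is equivalent to regularity of the localized sequence on $R_{\fn_\fp}(\fp)$. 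Proposition \ref{prop.reg}(iii) then produces an open neighbourhood $U$ of $\fp$ such that for every $\fq\in U$, $F_1(\fq),\ldots,F_m(\fq)$ is an $\hat R(\fq)$-regular sequence, so $\hat I(\fq)$ is a complete intersection.

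For part (2), first identify $\hat T_I(\fp)$ with $T_{\hat I(\fp)}$. The $R$-module
$$T_I:=R^m/\bigl(IR^m+\langle \partial F/\partial x_1,\ldots,\partial F/\partial x_n\rangle\bigr)$$
is finitely presented, so Corollary \ref{cor.rex} yields
$$\hat T_I(\fp)=\hat R(\fp)^m\big/\bigl(\hat I(\fp)\hat R(\fp)^m+\langle (\partial F/\partial x_1)(\fp),\ldots,(\partial F/\partial x_n)(\fp)\rangle\bigr).$$
Since partial differentiation is $A$-linear, $(\partial F/\partial x_k)(\fp)=\partial F(\fp)/\partial x_k$, and the right-hand side is exactly $T_{\hat I(\fp)}$. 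In particular $\hat d_\fp(T_I)=\dim_{k(\fp)}T_{\hat I(\fp)}=\tau(\hat I(\fp))<\infty$ because $\hat I(\fp)$ is an ICIS. The hypotheses of Proposition \ref{prop.TI} being assumed, that proposition yields an open neighbourhood $U'$ of $\fp$ such that $\hat d_\fq(T_I)\le \hat d_\fp(T_I)=\tau(\hat I(\fp))$ for all $\fq\in U'$. On $U\cap U'$, part (1) provides that $\hat I(\fq)$ is a complete intersection with $\dim_{k(\fq)}T_{\hat I(\fq)}=\hat d_\fq(T_I)<\infty$; by Theorem \ref{thm.GPh19} this implies that $\hat I(\fq)$ is an ICIS and $\tau(\hat I(\fq))=\hat d_\fq(T_I)\le \tau(\hat I(\fp))$, as required.

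The main obstacle is essentially just assembling the pre-existing building blocks; the only genuinely new step is the identification $\hat T_I(\fp)=T_{\hat I(\fp)}$, which hinges on $A$-linearity of the $\partial/\partial x_k$ together with the right exactness of $\hat\otimes_A(-)$ on finitely presented modules. Once this identification and the openness of the complete intersection locus from part (1) are in place, the semicontinuity of $\tau$ at an ICIS point is immediate from the semicontinuity of $\hat d_\fp(T_I)$.
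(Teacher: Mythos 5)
Your proposal is correct and follows essentially the same route as the paper: part (1) via Proposition \ref{prop.reg}(iii) applied to $M=R$ together with the regular-sequence characterization of complete intersections, and part (2) by combining this with the semicontinuity from Proposition \ref{prop.TI} and the identification of $\dim_{k(\fq)}T_{\hat I(\fq)}$ with the Tjurina number for complete intersections. Your explicit verification that $\hat T_I(\fq)=T_{\hat I(\fq)}$ is a detail the paper leaves implicit, but it is the same argument.
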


\begin{proof} 1. We may assume that $F_1(\fp),\ldots,F_m(\fp)$ is a $k(\fp)[[x]]$-sequence. By Proposition \ref{prop.reg}  $F_1(\fq),\ldots,F_m(\fq)$ is a $k(\fq)[[x]]$-sequence, hence $\hat I(\fq)$ is a complete intersection, for $\fq$ in an open neighbourhood of $\fp$ in $\Spec A$.\\
2. follows from Proposition \ref{prop.TI} since  for $\hat I(\fq) \subset k(\fq)[[x]]$ a complete intersection  $\dim_{k(\fq)} T_{\hat I(\fq)}$ is the Tjurina number of $\hat I(\fq)$.
\end {proof}


\end{document}